\newtheorem{theorem}{Theorem}[section]
\newtheorem{proposition}[theorem]{Proposition}
\newtheorem{lemma}[theorem]{Lemma}
\newtheorem{corollary}[theorem]{Corollary}
\theoremstyle{definition}
\newtheorem{definition}[theorem]{Definition}
\newtheorem{remark}[theorem]{Remark}
\newtheorem{ack}{Acknowledgement}
\title[Knotting corks]{Knotting corks}
\author[AKBULUT and YASUI]{Selman Akbulut and Kouichi Yasui}
\thanks{The first author is partially supported by NSF, and the second author was partially supported by JSPS Research Fellowships for Young Scientists, and by GCOE, Kyoto University.}
\date{August 10, 2009}
\subjclass[2000]{Primary~57R55, Secondary~57R65}
\keywords{4-manifold; cork; plug; knot surgery; rational blowdown}
\address{Department~of~Mathematics, Michigan State University, E.Lansing, MI, 48824, USA}
\email{akbulut@math.msu.edu}
\address{Research~Institute~for~Mathematical~Sciences, Kyoto~University, Kyoto 606-8502, Japan}\email{kyasui@kurims.kyoto-u.ac.jp}
\begin{document}
%\maketitle

\begin{abstract}
It is known that every exotic smooth structure on a simply connected closed $4$-manifold is determined by a codimention zero compact contractible Stein submanifold and an involution on its boundary. Such a pair is called a cork. In this paper, we construct infinitely many knotted imbeddings of corks in $4$-manifolds such that they induce infinitely many different exotic smooth structures. We also show that we can imbed an arbitrary finite number of corks disjointly into 4-manifolds, so that the corresponding involutions on the boundary of the contractible $4$-manifolds give mutually different exotic structures. Furthermore, we construct similar examples for plugs. 
\end{abstract}

%\keywords{4-manifold; handlebody; Stein manifold; rational blow-down; $h$-cobordism}
\maketitle

%\vspace{-.1in}

\section{Introduction}
In \cite{A1} the first author proved that $E(2)\#\overline{\mathbf{C}\mathbf{P}^2}$ changes its diffeomorphism type if we remove an imbedded copy of a Mazur manifold inside and reglue it by a natural involution on its bounday. This was later generalized to $E(n)\#\overline{\mathbf{C}\mathbf{P}^2}$ $(n\geq 2)$ by Bi\v zaca-Gompf~\cite{BG}. Here $E(n)$ denotes the relatively minimal elliptic surface with no multiple fibers and with Euler characteristic $12n$. Recently, the authors~\cite{AY1} and the first author~\cite{A5} constructed many such examples for other $4$-manifolds. 
The following general theorem was first proved independently by Matveyev~\cite{M}, Curtis-Freedman-Hsiang-Stong~\cite{C}, and later on strengthened by the first author and Matveyev~\cite{AM2}:

\begin{theorem}[\cite{M}, \cite{C},  \cite{AM2}]\label{th:1.1}
Let $X$ be a simply connected closed smooth $4$-manifold. If a smooth $4$-manifold $Y$ is homeomorphic but not diffeomorphic to $X$, then there exist a codimention zero contractible submanifold $C$ of $X$ and an involution $\tau$ on the boundary $\partial C$ such that $Y$ is obtained from $X$ by removing the submanifold $C$ and regluing it via the involution $\tau$. Such a pair $(C,\tau)$ is called a {\it Cork} of $X$. Furthermore, corks and their complements can always be made Stein manifolds. 
\end{theorem}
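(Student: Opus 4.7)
The plan is to construct the cork from a normalized $h$-cobordism. Since $X$ and $Y$ are homeomorphic simply connected closed smooth $4$-manifolds they have isomorphic intersection forms, so by Wall's theorem there is a smooth $5$-dimensional $h$-cobordism $W$ with $\partial W = (-X) \sqcup Y$. I would first simplify $W$ by standard handle-trading so that it contains only handles of index $2$ and $3$, and then arrange by handle slides that these occur in equal numbers $k$, organized into algebraically cancelling pairs. Were Whitney's trick available in dimension $4$ one could geometrically cancel each pair and conclude that $W$ is a product, i.e.\ $X\cong Y$; the failure of geometric cancellation is precisely what the cork will measure.

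Next I would examine the middle level $M \subset W$ separating the $2$-handles from the $3$-handles. In $M$ the belt $2$-spheres of the $2$-handles and the attaching $2$-spheres of the $3$-handles form systems $\{S_i\}$ and $\{S_i^{\ast}\}$ which are algebraically dual but which typically have extra geometric intersection pairs that cannot be smoothly removed. I would take $C \subset X$ to be the compact submanifold consisting of the $0$-handle together with the $2$-handles coming from one of these systems, thickened by auxiliary $1$-handles introduced to kill $\pi_1$ of the complement. A direct homology computation using the algebraic duality shows $C$ is contractible and its complement is simply connected. The involution $\tau$ of $\partial C$ arises because $\partial C$ admits two distinct handle descriptions, one from the $2$-handles of $W$ read from below and one from the $3$-handles read upside down; the self-diffeomorphism interchanging these two descriptions is an involution, and regluing via $\tau$ converts $X$ into $Y$.

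The final, and main, technical obstacle is to ensure that both $C$ and $X\setminus \mathring{C}$ carry Stein structures. For this I would invoke Eliashberg's criterion: a $4$-dimensional handlebody built from a $0$-handle, $1$-handles, and $2$-handles attached along a Legendrian link with framings $\operatorname{tb}-1$ admits a Stein structure. Since $C$ already has only $1$- and $2$-handles, its attaching link can in principle be put in Legendrian position in the standard contact structure on the $1$-handlebody boundary and then stabilized to achieve the required framings, and similarly for the complement. The subtlety is that Legendrian realization and stabilization change the smooth handle data, so one must introduce and cancel auxiliary $1$-$2$ handle pairs in such a way that the cork structure, the involution $\tau$, and the Stein conditions on both pieces are preserved simultaneously; this coordinated adjustment is the technical heart of the Akbulut--Matveyev refinement.
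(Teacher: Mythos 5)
This theorem is not proved in the paper at all: it is quoted as background from the references [M], [C], [AM2], so there is no in-paper argument to compare against. Your outline does track the strategy of those references (Wall's h-cobordism, handle trading to a 2/3-handle presentation, localizing the failure of the Whitney trick in the middle level, and the Eliashberg-criterion upgrade to Stein structures), but as a proof it has several genuine gaps at exactly the points where the theorem's content lives.

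The most serious omission is the product structure over the complement. After you collect the excess intersections of the belt spheres $S_i$ and attaching spheres $S_i^{\ast}$ into a connected neighborhood $N$ of the middle level and form the sub-cobordism $W_N$ lying over $N$, you must prove that $W$ restricted over the complement of $W_N$ is a trivial product cobordism; without this you cannot conclude that $Y$ is obtained from $X$ by an operation supported in $C$. You assert that ``regluing via $\tau$ converts $X$ into $Y$'' but never address this step, nor the companion fact that the two ends $C_0\subset X$ and $C_1\subset Y$ of the sub-cobordism are actually \emph{diffeomorphic} contractible manifolds (not merely h-cobordant) --- which is needed even to make sense of ``removing $C$ and regluing it.'' Your description of $C$ itself is also off: it is not the $0$-handle of $X$ together with $2$-handles of $W$, but rather the bottom end of the sub-cobordism over $N$, i.e.\ a neighborhood in $X$ obtained by flowing $N$ (enlarged so as to be simply connected with simply connected complement and trivial homology) down through the $2$-handles. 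Next, the claim that the regluing map is an \emph{involution} is asserted from the ``read from below / read upside down'' symmetry, but one must actually verify that the induced boundary diffeomorphism squares to one isotopic to the identity; this uses the symmetric role of the $2$- and $3$-handle systems and is a real step in [C] and [M]. Finally, the Stein refinement is explicitly deferred: achieving the framing condition $\mathrm{tb}-1$ simultaneously on $C$ and on its complement, while preserving the decomposition and $\tau$, is the entire content of [AM2] and cannot be waved through by ``stabilize the Legendrian link,'' since stabilization only lowers $\mathrm{tb}$ and the required framings may exceed what is realizable without further modifying the handle structure.
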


Hence smooth structures on simply connected closed $4$-manifolds are determined by corks. In this paper, subsequent to~\cite{AY1}, we explore the behavior of corks. 
It is a natural question whether every smooth structure on a $4$-manifold can be induced from a fixed cork $(C,\tau)$. In~\cite{AY1}, we showed that two different exotic smooth structures on a $4$-manifold can be obtained from the same cork (imbedded differently). Here, we prove that infinitely many different smooth structures on $4$-manifolds can be obtained from a fixed cork:

\begin{theorem}\label{th:knotting corks}
There exist a compact contractible Stein $4$-manifold $C$, an involution $\tau$ on the boundary $\partial C$, and infinitely many simply connected closed smooth $4$-manifolds $X_n$ $(n\geq 0)$ with the following properties:\smallskip \\
$(1)$ The $4$-manifolds $X_n$ $(n\geq 0)$ are mutually homeomorphic but not diffeomorphic;\smallskip \\
$(2)$ For each $n\geq 1$, the $4$-manifold $X_n$ is obtained from $X_0$ by removing a copy of $C$ and regluing it via $\tau$. Consequently, the pair $(C,\tau)$ is a cork of $X_0$. 

\vspace{.05in}
In particular, from $X_0$ we can produce infinitely many different smooth structures by the process of removing a copy of $C$ and regluing it via $\tau$. Consequently, these embeddings of $C$ into $X_0$ are mutually non-isotopic $($knotted copies of each other$)$. 
\end{theorem}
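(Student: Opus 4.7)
The plan is to produce the $X_n$ by a concrete construction inside a single ambient $X_0$ and then, after the fact, realize each passage $X_0 \rightsquigarrow X_n$ as a cork twist on a (re-embedded) copy of one fixed cork $(C,\tau)$. For the base $X_0$ I would take a familiar simply connected surface, most naturally $E(n)\#\overline{\mathbf{CP}^2}$ or a similar elliptic / rational model, containing a standard configuration (a Gompf nucleus plus, say, a linear chain $C_p$ of embedded spheres realizing a rational blowdown). The cork $(C,\tau)$ is the one identified in the authors' earlier paper \cite{AY1} which realizes the rational blowdown of this configuration as a boundary involution. To build the $X_n$, I would perform Fintushel--Stern knot surgery on $X_0$ along a cusp torus inside the nucleus using a family of knots $K_n$ with pairwise distinct Alexander polynomials (for instance $K_n$ the $(2,2n+1)$-torus knot), producing candidate manifolds $X_n$.

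The central technical step, and the one that I expect to be the main obstacle, is to exhibit, for every $n\geq 1$, an embedding $\phi_n:C\hookrightarrow X_0$ such that cutting out $\phi_n(C)$ and regluing by $\tau$ yields precisely $X_n$. This is a Kirby-calculus argument: using the identification of rational blowdown with a cork twist from \cite{AY1}, the knot surgery performed far from $C$ can be reinterpreted as first isotoping the configuration $C_p$ (and hence the attached cork) through the handle diagram, then twisting. Concretely, I would draw a Kirby picture of $X_0$ in which the framed link defining $K_n$-surgery and the configuration $C_p$ are visible; handle slides should convert this to a picture of $X_n$ in which a copy of $C$ sits in the original slot, differing from the previous embedding by a sequence of slides that one cannot undo by an isotopy. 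The delicate point is verifying that these slides really produce a smooth re-embedding of the same contractible $C$, not merely of a manifold diffeomorphic to it after twisting.

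Having established the cork-twist description, I would distinguish the $X_n$ pairwise using Seiberg--Witten invariants. Since knot surgery along a cusp neighborhood does not change the intersection form, spin type, or fundamental group, all $X_n$ are homeomorphic to $X_0$ by Freedman's classification. On the other hand, the Fintushel--Stern formula expresses the Seiberg--Witten polynomial of $X_n$ in terms of the Alexander polynomial $\Delta_{K_n}(t)$; since the $\Delta_{K_n}$ are pairwise distinct, so are the SW invariants of the $X_n$, and they are therefore mutually non-diffeomorphic. This simultaneously gives property (1) and shows that $(C,\tau)$ is a cork of $X_0$ that admits infinitely many distinct exotic twists.

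Finally, the non-isotopy of the embeddings $\phi_n$ is automatic: if $\phi_m$ and $\phi_n$ were smoothly isotopic in $X_0$, then the cork twists $X_m$ and $X_n$ would be diffeomorphic, contradicting the SW calculation. Thus the embeddings $\{\phi_n\}_{n\geq 0}$ form infinitely many knotted copies of $C$ in $X_0$, establishing the final claim of the theorem.
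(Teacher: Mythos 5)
There is a genuine gap at exactly the step you flag as ``the central technical step.'' You set $X_0 = E(n)\#\overline{\mathbf{C}\mathbf{P}^2}$ and define $X_n$ by knot surgery on a cusp torus, and then hope that Kirby-calculus manipulations will exhibit, for each $n$, an embedding of the fixed cork $C$ in $X_0$ whose twist produces $X_n$. No mechanism is given for this, and none is available in that setup: the knot surgery takes place in an essential torus neighborhood far from the cork, handle slides preserve the ambient diffeomorphism type, and ``isotoping the configuration through the diagram and then twisting'' would only produce the cork twist of $X_n$, not $X_n$ itself. The general cork theorem guarantees \emph{some} cork for each pair $(X_0,X_n)$, but nothing forces it to be the same $(C,\tau)$ for all $n$, which is the whole content of the theorem.

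The paper's proof supplies the missing mechanism by reversing the roles of the manifolds. One first twists $E(n)\#\overline{\mathbf{C}\mathbf{P}^2}$ along the copy of $W_1$ of Proposition~\ref{prop:cork} to obtain a manifold $X$ that splits off $\mathbf{CP}^2\#2\overline{\mathbf{C}\mathbf{P}^2}$, hence $S^2\times S^2$, as a connected summand disjoint from the cusp neighborhood. The stabilization theorem for knot surgery (Akbulut~\cite{A4}, Auckly~\cite{Au}) then gives $X_K\cong X$ for \emph{every} knot $K$, and under this identification the copy of $W_1$ in $X_K$ whose twist yields $E(n)_K\#\overline{\mathbf{C}\mathbf{P}^2}$ becomes a new, knot-dependent embedding of $W_1$ into the single manifold $X_0:=X$. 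So the correct choices are $X_0=X$ (which has vanishing Seiberg--Witten invariant) and $X_i=E(n)_{K_i}\#\overline{\mathbf{C}\mathbf{P}^2}$, distinguished pairwise by the Fintushel--Stern formula~\cite{FS2}. Your homeomorphism argument, your use of distinct Alexander polynomials, and your final non-isotopy observation are all consistent with the paper, but without the ``twist first, then dissolve, then apply stabilization'' step the construction of the embeddings $\phi_n$ does not go through. (Also, the rational-blowdown cork $(W_{p-1},f_{p-1})$ you invoke is the tool for Theorems~\ref{th:disjoint corks} and~\ref{th:involutions of corks}; Theorem~\ref{th:knotting corks} uses $(W_1,f_1)$ arising directly from the handle decomposition of $E(n)\#\overline{\mathbf{C}\mathbf{P}^2}$.)
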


It is interesting to discuss positions of corks in $4$-manifolds. The next theorem says that we can put an arbitrary finite number of corks into mutually disjoint positions in $4$-manifolds:

\begin{theorem}\label{th:disjoint corks}
For each $n\geq 1$, there exist simply connected closed smooth $4$-manifolds $Y_{i}$ $(0\leq i\leq n)$, codimension zero compact contractible Stein submanifolds $C_i$ $(1\leq i\leq n)$ of $Y_0$, and an involution $\tau_i$ on the each boundary  $\partial C_i$ $(1\leq i\leq n)$ with the following properties:\smallskip \\
$(1)$ The submanifolds $C_i$ $(1\leq i\leq n)$ of $Y_{0}$ are mutually disjoint;\smallskip\\
$(2)$  $Y_{i}$ $(1\leq i\leq n)$ is obtained from $Y_{0}$ by removing the submanifold $C_i$ and  regluing it via $\tau_i$;\smallskip \\
$(3)$ The $4$-manifolds $Y_{i}$ $(0\leq i\leq n)$ are mutually homeomorphic but not diffeomorphic. In particular, the pairs $(C_i, \tau_i)$ $(1\leq i\leq n)$ are corks of $Y_{0}$.
\end{theorem}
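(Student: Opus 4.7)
The plan is to embed $n$ pairwise disjoint corks inside one ambient manifold $Y_0$ so that twisting along the $i$-th cork realizes a Fintushel-Stern knot surgery with a chosen knot $K_i$, and then to pick the $K_i$ with pairwise distinct Alexander polynomials so that the resulting manifolds $Y_i$ are pairwise non-diffeomorphic by Seiberg-Witten invariants. The point is that the cork-to-knot-surgery correspondence used in the proof of Theorem~\ref{th:knotting corks} is entirely local, so it can be replicated independently in disjoint pieces of the ambient manifold.

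Concretely, I would take $Y_0$ to be a blowup of an elliptic surface, say $E(N)\#k\overline{\mathbf{C}\mathbf{P}^2}$ for sufficiently large $N$ and $k$, chosen so that $Y_0$ contains $n$ pairwise disjoint cusp-neighborhoods $U_1,\dots,U_n$, each containing a square-zero torus $T_i$, with one spare exceptional divisor available next to each $T_i$. This produces $n$ disjoint local regions, each of which will host one cork.

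Next I would import the technical device used for Theorem~\ref{th:knotting corks} and in \cite{AY1}: inside a cusp-neighborhood of $T_i$, supplemented by a single $\overline{\mathbf{C}\mathbf{P}^2}$-summand, Fintushel-Stern knot surgery along $T_i$ with a knot $K_i$ is realized by an involution twist along a compact contractible Stein submanifold $C_i$. This is a Kirby-calculus construction carried out entirely within an arbitrarily small neighborhood of $T_i$ together with its adjacent exceptional sphere. Replicating it in each of the $n$ disjoint regions $U_i$ produces disjoint corks $(C_i,\tau_i)\subset Y_0$, and $Y_i$ is then diffeomorphic to the knot-surgered, blown-up manifold $E(N)_{K_i}\#k\overline{\mathbf{C}\mathbf{P}^2}$. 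Choosing $K_1,\dots,K_n$ with pairwise distinct nontrivial Alexander polynomials (for instance $K_i=T_{2,2i+1}$), the Fintushel-Stern formula together with the Seiberg-Witten blowup formula shows that $Y_0,Y_1,\dots,Y_n$ have pairwise distinct basic-class polynomials and are therefore pairwise non-diffeomorphic; homeomorphism is immediate from Freedman's theorem since cork twisting preserves simple-connectedness, Euler characteristic, signature, and intersection-form parity.

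The main obstacle is the second step: ensuring that the contractible Stein piece $C_i$ realizing the knot surgery actually fits inside a neighborhood of $T_i$ that is small enough to stay disjoint from the other $n-1$ cusp regions, so that the local cork construction really can be replicated $n$ times in genuinely disjoint positions. This is a Kirby-calculus verification carried out once in a single nucleus and then propagated; once it is in hand, the distinguishing of the smooth structures reduces to a routine Seiberg-Witten calculation via the knot-surgery and blowup formulae.
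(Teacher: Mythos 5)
Your approach (realize each cork twist as a Fintushel--Stern knot surgery on one of $n$ disjoint fiber tori, then separate the results by the knot-surgery formula) is genuinely different from the paper's, and it has a gap at exactly the step you flag as the ``main obstacle'' --- but the obstacle is more serious than the Kirby-calculus bookkeeping you describe. The knot-surgery-to-cork-twist correspondence used for Theorem~\ref{th:knotting corks} is \emph{not} local. What is actually proved there is: the cork $W_1$ sits in $E(n)\#\overline{\mathbf{C}\mathbf{P}^2}$ \emph{disjoint from} the cusp neighborhood; twisting it produces a manifold $X$ that splits off $S^2\times S^2$; the stabilization theorem then gives a \emph{global} diffeomorphism $X_K\cong X$; and the embedding of $W_1$ into $X$ whose twist yields $E(n)_K\#\overline{\mathbf{C}\mathbf{P}^2}$ is the image of a standard copy under this global diffeomorphism, over whose support there is no control. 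So there is no justification for the claim that the cork realizing surgery on $T_i$ with $K_i$ ``fits inside a small neighborhood of $T_i$ together with its adjacent exceptional sphere,'' and hence no way, with these tools, to make the $n$ corks disjoint. There is also a directional problem: if $Y_0$ carries nontrivial Seiberg--Witten invariants and each twist creates an $S^2\times S^2$ summand (as in the mechanism you are importing), then all the twisted manifolds are SW-trivial and cannot be told apart; if instead $Y_0$ is the SW-trivial manifold, the corks whose twists restore the various $E(N)_{K_i}$ are precisely the globally relocated, uncontrolled ones.

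The paper sidesteps all of this by using a different, genuinely local mechanism: rational blowdown. Theorem~\ref{th:cork and rbd} shows that the configuration $D_p$ contains a copy of $W_{p-1}$, \emph{inside $D_p$ itself}, whose twist realizes the rational blowdown along $C_p\subset D_p$ up to blow-ups. Proposition~\ref{prop:disjoint C_p} embeds $n$ disjoint copies of such configurations (with distinct parameters $p_1,\dots,p_n$) into $Y_0=E(p_1+\cdots+p_n)\#n\overline{\mathbf{C}\mathbf{P}^2}$, so disjointness is automatic, and the resulting $Y_i$ are distinguished by counting Seiberg--Witten basic classes via $N(Y_i)=2^{p_i-1}N(Y_0)$. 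If you want to pursue the knot-surgery route, you would need to supply a new localization result for the cork realizing knot surgery; as written, the argument does not go through.
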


The following theorem says that, for an embedding of a cork into a $4$-manifold, we can produce finitely many different cork structures of the $4$-manifold by only changing the involution of the cork without changing its embedding:

\begin{theorem}\label{th:involutions of corks}
For each $n\geq 1$, there exist simply connected closed smooth $4$-manifolds $Y_{i}$ $(0\leq i\leq n)$, an embedding of a compact contractible Stein $4$-manifold $C$ into $Y_{0}$, and involutions $\tau_i$ $(1\leq i\leq n)$ on the boundary $\partial C$ with the following properties:\smallskip \\
$(1)$ For each $1\leq i\leq n$, the $4$-manifold $Y_{i}$ is obtained from $Y_{0}$ by removing the submanifold $C$ and regluing it via $\tau_i$;\smallskip \\
$(2)$ The $4$-manifolds $Y_{i}$ $(0\leq i\leq n)$ are mutually homeomorphic but not diffeomorphic, hence the pairs $(C, \tau_i)$ $(1\leq i\leq n)$ are mutually different corks of $Y_{0}$.
\end{theorem}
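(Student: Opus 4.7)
The plan is to deduce Theorem \ref{th:involutions of corks} from Theorem \ref{th:disjoint corks} by fusing the $n$ disjoint corks produced there into a single contractible Stein submanifold, and realizing each of the individual cork involutions as a distinct involution on the boundary of the fused piece.

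Concretely, I would first apply Theorem \ref{th:disjoint corks} to obtain $Y_{0},Y_{1},\ldots,Y_{n}$, disjoint contractible Stein submanifolds $C_{1},\ldots,C_{n}$ of $Y_{0}$, and involutions $\sigma_{i}$ on $\partial C_{i}$ such that $Y_{i}$ is obtained from $Y_{0}$ by cork-twisting only $(C_{i},\sigma_{i})$. After a preliminary ambient isotopy on $\partial C_{i}$, I would arrange each $\sigma_{i}$ to be supported in a small compact region, so that in particular a 3-disk $B_{i}\subset \partial C_{i}$ is pointwise fixed by $\sigma_{i}$.

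Second, I would choose $n-1$ disjoint smoothly embedded arcs in $Y_{0}$ joining $B_{1},\ldots,B_{n}$ and thicken them to $1$-handle tubes, thereby forming inside $Y_{0}$ the boundary connected sum $C := C_{1}\natural C_{2}\natural\cdots\natural C_{n}$. The manifold $C$ is a codimension-zero submanifold of $Y_{0}$ that is homotopy equivalent to the wedge $C_{1}\vee\cdots\vee C_{n}$ of contractibles, hence contractible; since boundary connected sums of compact Stein 4-manifolds are Stein, $C$ is Stein as well. On $\partial C = \partial C_{1}\#\cdots\#\partial C_{n}$ I would then define $\tau_{i}$ to be the extension of $\sigma_{i}$ by the identity on the remaining summands $\partial C_{j}$ with $j\neq i$ and on the boundaries of the connecting tubes; this is a well-defined involution precisely because $\sigma_{i}$ fixes $B_{i}$ pointwise. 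Cork-twisting $(C,\tau_{i})$ inside $Y_{0}$ then coincides, manifold-wise, with cork-twisting $(C_{i},\sigma_{i})$ inside $Y_{0}$ (since the regluing is trivial outside $\partial C_{i}$), and therefore yields $Y_{i}$. The mutual homeomorphism/non-diffeomorphism of the $Y_{i}$ already guaranteed by Theorem \ref{th:disjoint corks} then furnishes conclusion (2), while conclusion (1) is built into the construction.

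The main technical hurdle will be the preliminary isotopy producing a fixed 3-disk for each $\sigma_{i}$. For the concrete corks appearing in the proof of Theorem \ref{th:disjoint corks}, each involution arises from a local symmetry of the handle picture (interchanging a $1$-handle with a $2$-handle), and its support can be confined to a neighborhood of this symmetry region by an ambient isotopy; any 3-ball chosen in the complement of this region then satisfies our requirement. A secondary check is that the Stein property survives the boundary sum, which is standard: one is simply attaching index-$1$ handles, and such attachments always preserve Stein-ness in dimension four. Everything else in the argument is formal once these two points are verified.
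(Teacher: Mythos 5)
Your route is essentially the paper's route: the paper also fuses the disjoint corks of Theorem~\ref{th:disjoint corks} into a single boundary connected sum $W(p_1-1,\dots,p_n-1)=W_{p_1-1}\natural\cdots\natural W_{p_n-1}$ (Definition~\ref{def:disjoint cork}(2)), checks it is contractible and Stein by exhibiting a Legendrian handle diagram satisfying the Eliashberg criterion (the analogue of your ``1-handles preserve Steinness'' remark), and takes $\tau_i=f^i(p_1-1,\dots,p_n-1)$ to be the involution that twists only the $i$-th summand (Proposition~\ref{prop:X_i}(2)). The only substantive difference is that the paper builds the fused cork directly inside the explicit handle decomposition of $Y_0$ rather than quoting Theorem~\ref{th:disjoint corks} as a black box, which lets it produce $\tau_i$ as an honest involution from a symmetric link diagram.

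One step of your argument is misstated and would fail as written: a nontrivial smooth involution of a connected $3$-manifold cannot fix a $3$-disk pointwise (the fixed-point set of a nontrivial finite-order diffeomorphism has empty interior, by Bochner linearization/Newman's theorem), so no ambient isotopy can arrange $\sigma_i$ to be the identity on $B_i$ while keeping $\sigma_i^2=\mathrm{id}$. The repair is standard and is what the paper's Figure~\ref{fig9} implements implicitly: the fixed-point set of $f_{k}$ on $\partial W_{k}$ is nonempty (it is the image of the rotation axis of the link symmetry), so you should take $B_i$ to be a small $\sigma_i$-\emph{invariant} ball about a fixed point, attach the connecting tube equivariantly, and extend $\sigma_i$ by the identity outside; alternatively, observe that the reglued manifold depends only on the isotopy class of the gluing map and then verify separately that the resulting class contains an involution. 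With that correction your argument goes through and coincides with the paper's.
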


In \cite{AY1}, we introduced new objects which we call \textit{Plugs}. We also construct similar examples for plugs (See Section~\ref{section:remark}). 

\begin{ack}
The second author would like to thank his adviser Hisaaki Endo for encouragement. 
\end{ack}

\section{Corks}
In this section, we recall corks. For details, see~\cite{AY1}.

\begin{definition}
Let $C$ be a compact contractible Stein $4$-manifold with boundary and $\tau: \partial C\to \partial C$ an involution on the boundary. 
We call $(C, \tau)$ a \textit{Cork} if $\tau$ extends to a self-homeomorphism of $C$, but cannot extend to any self-diffeomorphism of $C$. 
A cork $(C, \tau)$ is called a cork of a smooth $4$-manifold $X$, if  $C\subset X$ and $X$ changes its diffeomorphism type when we remove $C$ and reglue  it via $\tau$. Note that this operation does not change the homeomorphism type of $X$.
\end{definition}
\begin{remark}
In this paper, we always assume that corks are contractible. (We did not assume this in the more general definition of~\cite{AY1}.) Note that Freedman's theorem tells us that every self-diffeomorphism of the boundary of $C$ extends to a self-homeomorphism of $C$ when $C$ is a compact contractible smooth $4$-manifold. 
\end{remark}
%\vspace{.1in}

\begin{definition}Let $W_n$ be the contractible smooth 4-manifold shown in Figure~$\ref{fig1}$. Let $f_n:\partial W_n\to \partial W_n$ be the obvious involution obtained by first surgering $S^1\times B^3$ to $B^2\times S^2$ in the interior of $W_n$, then surgering the other imbedded $B^2\times S^2$ back to $S^1\times B^3$ (i.e. replacing the dot and ``0'' in Figure ~$\ref{fig1}$). Note that the diagram of $W_n$ comes from a symmetric link.
\begin{figure}[ht!]
\begin{center}
\includegraphics[width=1.1in]{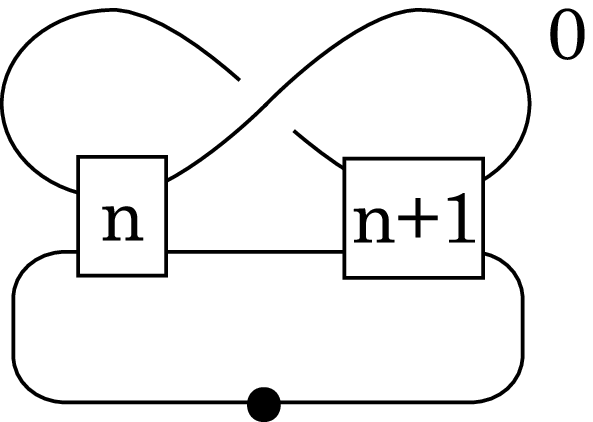}
\caption{$W_n$}
\label{fig1}
\end{center}
\end{figure}
\end{definition}
\vspace{-1.15\baselineskip }
\begin{theorem}[\cite{AY1}]\label{th:cork}
For $n\geq 1$, the pair $(W_n, f_n)$ is a cork. 
\end{theorem}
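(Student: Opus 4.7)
The plan is to establish four things: that $W_n$ is compact contractible, that it carries a Stein structure, that $f_n$ is genuinely an involution which extends to a self-homeomorphism of $W_n$, and finally (the crux) that $f_n$ admits no smooth extension over $W_n$. The first three are structural/topological and routine; the last requires a gauge-theoretic obstruction.

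For contractibility, read off the handle decomposition from Figure~\ref{fig1}: one $0$-handle, one $1$-handle (the dotted component) and one $2$-handle (the $0$-framed component). The attaching circle of the $2$-handle passes geometrically once through the $1$-handle, so the presentation of $\pi_1(W_n)$ from the diagram kills the generator, and since $\chi(W_n)=1$, the manifold is contractible. For the Stein property, isotope the attaching data so the dotted circle becomes the standard Legendrian unknot in $(S^3,\xi_{std})$ and the attaching circle of the $2$-handle is put in Legendrian position; one verifies that the $0$-framing equals $\mathop{tb}-1$ of this Legendrian representative, so Eliashberg's criterion produces a Stein structure on $W_n$. The map $f_n$ is an involution because the underlying link in Figure~\ref{fig1} is symmetric, and the dot/zero exchange is precisely the effect on $\partial W_n$ of this link-symmetry viewed as a diffeomorphism of $S^3$ lifted through the surgery. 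That $f_n$ extends to a self-homeomorphism of $W_n$ is automatic from Freedman's theorem, as recorded in the Remark: any self-diffeomorphism of the boundary of a compact contractible smooth $4$-manifold extends over the interior topologically.

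The main obstacle is showing that $f_n$ does not extend to any self-diffeomorphism of $W_n$. The strategy I would follow is the standard one used throughout cork theory and invoked in~\cite{AY1}: exhibit an embedding $W_n\hookrightarrow X$ into a closed simply connected smooth $4$-manifold such that the cork twist $X_{f_n}:=(X\setminus \mathrm{int}\,W_n)\cup_{f_n} W_n$ is homeomorphic but not diffeomorphic to $X$. If $f_n$ extended over $W_n$ smoothly, the two gluings would yield diffeomorphic $4$-manifolds, contradicting the exotic pair. The homeomorphism half is clear since the twist does not change the intersection form on a simply connected closed $4$-manifold. The non-diffeomorphism half is proved by choosing $X$ so that a Seiberg--Witten (or Donaldson) computation distinguishes $X$ from $X_{f_n}$; concretely, one can take the embedding of $W_n$ constructed in~\cite{AY1}, where the complement carries enough positive-scalar-curvature or basic-class information to force the two $SW$ invariants to differ.

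The only genuinely hard step is the last one, and it is really a citation to the examples built in~\cite{AY1}; the rest is handle calculus, an Eliashberg check, a link symmetry, and an application of Freedman's theorem. I would organize the written proof as: (i) verify contractibility and the Stein structure from Figure~\ref{fig1}; (ii) identify $f_n$ with the link symmetry and note it squares to the identity on $\partial W_n$; (iii) invoke Freedman for the topological extension; (iv) invoke the embedding from~\cite{AY1} and the resulting exotic pair to rule out a smooth extension.
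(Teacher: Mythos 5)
The paper itself offers no proof of this statement---it is quoted directly from \cite{AY1}---so the relevant comparison is with the argument given there. Your overall architecture (contractibility and the Stein structure read off from the handle picture, the involution coming from the link symmetry, Freedman's theorem for the topological extension, and an embedding into a closed $4$-manifold whose cork twist is detected by Seiberg--Witten invariants to rule out a smooth extension) is exactly the structure of the proof in \cite{AY1}, and your steps (ii)--(iv) are correct as stated, with the last step honestly acknowledged as a citation to the explicit exotic embeddings constructed there.

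There is, however, one concrete error in step (i). You assert that the attaching circle of the $2$-handle passes \emph{geometrically} once over the $1$-handle. If that were true, the $1$-handle/$2$-handle pair would cancel and $W_n$ would be diffeomorphic to $B^4$; but then every self-diffeomorphism of $\partial W_n\cong S^3$ would extend to a self-diffeomorphism of $B^4$, and $(W_n,f_n)$ could not be a cork---your premise would refute the theorem. In Figure~\ref{fig1} the attaching circle links the dotted circle algebraically once but geometrically more than once (it is a Mazur-type link). The correct route to contractibility is: algebraic linking number $\pm 1$ gives $H_*(W_n)\cong H_*(\mathrm{pt})$, and the relator word read off from the actual geometric passes normally generates the free group on the $1$-handle generator, so $\pi_1(W_n)=1$; contractibility then follows from the Hurewicz and Whitehead theorems. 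With that correction your outline is a faithful reconstruction of the cited proof.
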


\section{Proof of Theorem~\ref{th:knotting corks}}
In this section, we prove Theorem~\ref{th:knotting corks} by using Fintushel-Stern's knot surgery and arguments similar to the proofs of~\cite[Theorem~3.4 and 3.5]{AY1}. \medskip 

First recall the following useful theorem from Gompf-Stipsicz~\cite[Section $9.3$]{GS}.

\begin{theorem}[Gompf-Stipsicz~\cite{GS}]\label{th:GS}
For $n\geq 1$, the elliptic surface $E(n)$ has the handle decomposition in Figure~$\ref{fig2}$. The obvious cusp neighborhood $($i.e. the dotted circle, one of the $-1$ framed meridians of the dotted circle, and the left most $0$-framed unknot$)$ is isotopic to the regular neighborhood of a cusp fiber of $E(n)$.%
\begin{figure}[ht!]
\begin{center}
\includegraphics[width=3.3in]{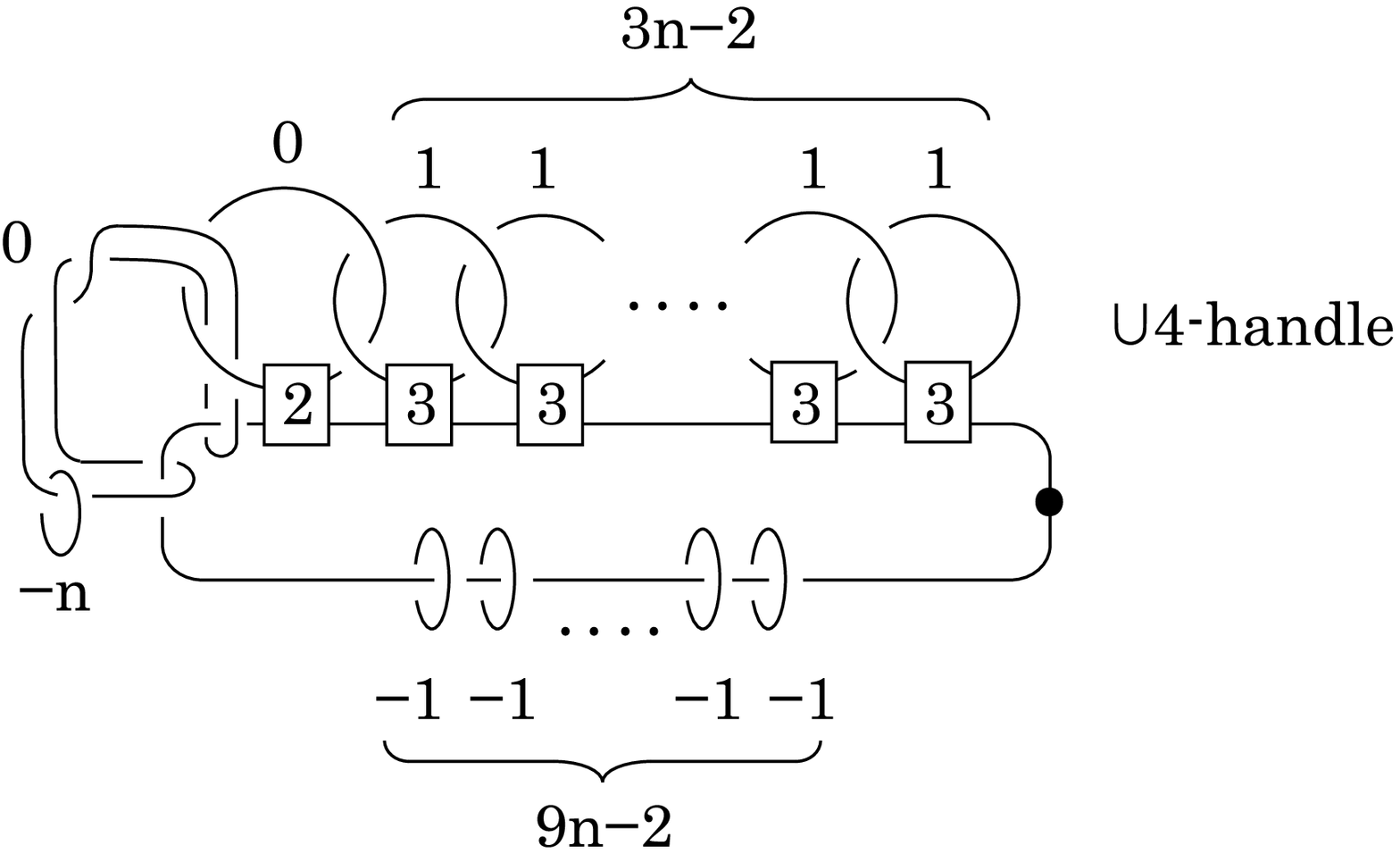}
\caption{$E(n)$}
\label{fig2}
\end{center}
\end{figure}
\end{theorem}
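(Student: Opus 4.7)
The plan is to prove the handle decomposition of $E(n)$ in Figure~\ref{fig2} by induction on $n$, building $E(1)$ as a genus-one Lefschetz fibration over $S^2$ with twelve nodal fibers and obtaining $E(n)$ by iterated fiber sum, and then to identify the cusp neighborhood directly by Kirby calculus on the indicated sub-handlebody.

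For the base case $n=1$, I would start from the classical presentation of $E(1)$ as $\mathbf{CP}^2\#9\overline{\mathbf{CP}^2}$ with its elliptic pencil of cubics and perturb to a Lefschetz fibration $E(1)\to S^2$ with twelve $I_1$ singular fibers, corresponding to the relation $(t_a t_b)^6=1$ in the mapping class group of the torus, where $t_a,t_b$ are Dehn twists along the standard generators of $H_1(T^2)$. To draw the handle picture, I would take a standardly embedded fiber $T^2\subset S^3$, realized by a dotted unknotted circle (representing $a$) together with a $0$-framed unknot linking it once (representing $b$); the total space of the Lefschetz fibration over $D^2$ is then obtained by attaching twelve $-1$-framed $2$-handles along parallel copies of the $a$- and $b$-curves pushed into $S^3$. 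These appear in the diagram as twelve $-1$-framed meridians of the dotted circle. Capping off the base disk by the remaining $D^2$ in $S^2$ contributes only $3$- and $4$-handles, which are suppressed in the figure, matching Figure~\ref{fig2} for $n=1$.

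For the inductive step, I would use $E(n)=E(n-1)\#_f E(1)$, the fiber sum along a regular torus fiber. In handle language, the fiber sum operation attaches another batch of twelve vanishing-cycle $2$-handles along the same standardly embedded torus $T^2\subset S^3$ used at the previous stage, so the diagram for $E(n)$ is simply the diagram for $E(n-1)$ with twelve additional $-1$-framed meridians of the dotted circle, which is exactly what Figure~\ref{fig2} asserts.

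To identify the cusp neighborhood, I would isolate the sub-handlebody $N$ consisting of the dotted circle, a single $-1$-framed meridian, and the leftmost $0$-framed unknot. A short sequence of Kirby moves then converts $N$ into the standard model: slide the $0$-framed unknot over the $-1$-framed meridian to wrap it around the dotted circle twice with a clasp, then slide once more to produce the left-handed trefoil, while the meridian can be absorbed as an inverse of this slide and cancelled against an unknotting isotopy of the dotted $1$-handle. The result is $B^4$ with a single $0$-framed $2$-handle attached along the left-handed trefoil, which is the classical handle description of a cusp fiber neighborhood.

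The main obstacle I expect is the careful bookkeeping in the base case: translating the Lefschetz monodromy $(t_at_b)^6$ into explicit framings and linkings on a torus standardly embedded in $S^3$, and verifying that the twelve vanishing cycles really do appear as honest $-1$-framed meridians of the dotted circle (as opposed to meridians with extra writhe that must be absorbed). Once the base case is established, the inductive step is immediate from the fiber sum description, and the cusp-neighborhood identification reduces to the standard Kirby calculus argument outlined above.
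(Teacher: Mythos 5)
First, a point of comparison: the paper offers no proof of this statement at all --- it is quoted from Gompf--Stipsicz~\cite{GS} and used as a black box. So the only meaningful comparison is with the derivation in that book, which does follow the route you outline (elliptic fibration on $E(1)$ with monodromy $(t_at_b)^6$, fiber sums, then Kirby calculus). Your strategy is the right one, but as written it has genuine gaps.

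The main one is your handle model of the fiber neighborhood. A dotted unknot together with a $0$-framed unknot linking it once is a cancelling $1$-handle/$2$-handle pair, i.e.\ $B^4$, not $T^2\times D^2$; the fiber neighborhood requires \emph{two} $1$-handles and a $0$-framed $2$-handle whose attaching circle runs over both (the Borromean-rings picture). In that honest Lefschetz diagram the twelve vanishing cycles appear as $-1$-framed meridians distributed between the \emph{two} dotted circles (six parallel to $a$, six parallel to $b$), so the claim that they all become meridians of a single dotted circle is precisely the nontrivial Kirby-calculus content of Figure~\ref{fig2}: one must cancel one of the $1$-handles against a $2$-handle and slide everything into the stated position, and none of that is addressed. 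Second, capping off the fibration over $D^2$ to one over $S^2$ glues in an upside-down $T^2\times D^2$, which contributes a $2$-handle in addition to the $3$- and $4$-handles; pinning down its attaching circle and framing (equivalently, the gluing diffeomorphism of $T^3$) is a real step, not something that can be dismissed. Finally, the theorem asserts that the indicated sub-handlebody is \emph{isotopic to the regular neighborhood of a cusp fiber of $E(n)$}, and this embedded statement is exactly what the paper later needs (knot surgery and the Fintushel--Stern formula require the torus to sit as a fiber). Your Kirby-move argument at best identifies the abstract diffeomorphism type of the sub-handlebody with $B^4\cup(\text{$0$-framed trefoil $2$-handle})$; the isotopy claim instead comes from tracking the sub-handlebody through the simplification and from the standard fact that a regular fiber together with a geometrically dual pair of vanishing cycles can be merged into a single cusp fiber by a perturbation of the fibration. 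Until those three points are filled in, the proposal is a plausible outline rather than a proof.
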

%\vspace{.05in}

\begin{corollary}\label{cor1:GS}
For $n\geq 1$, the elliptic surface $E(n)$ has a handle decomposition as in Figure~$\ref{fig3}$. The obvious cusp neighborhood $($i.e. $0$-framed trefoil knot$)$ is isotopic to the regular neighborhood of a cusp fiber of $E(n)$.%
\begin{figure}[ht!]
\begin{center}
\includegraphics[width=3.5in]{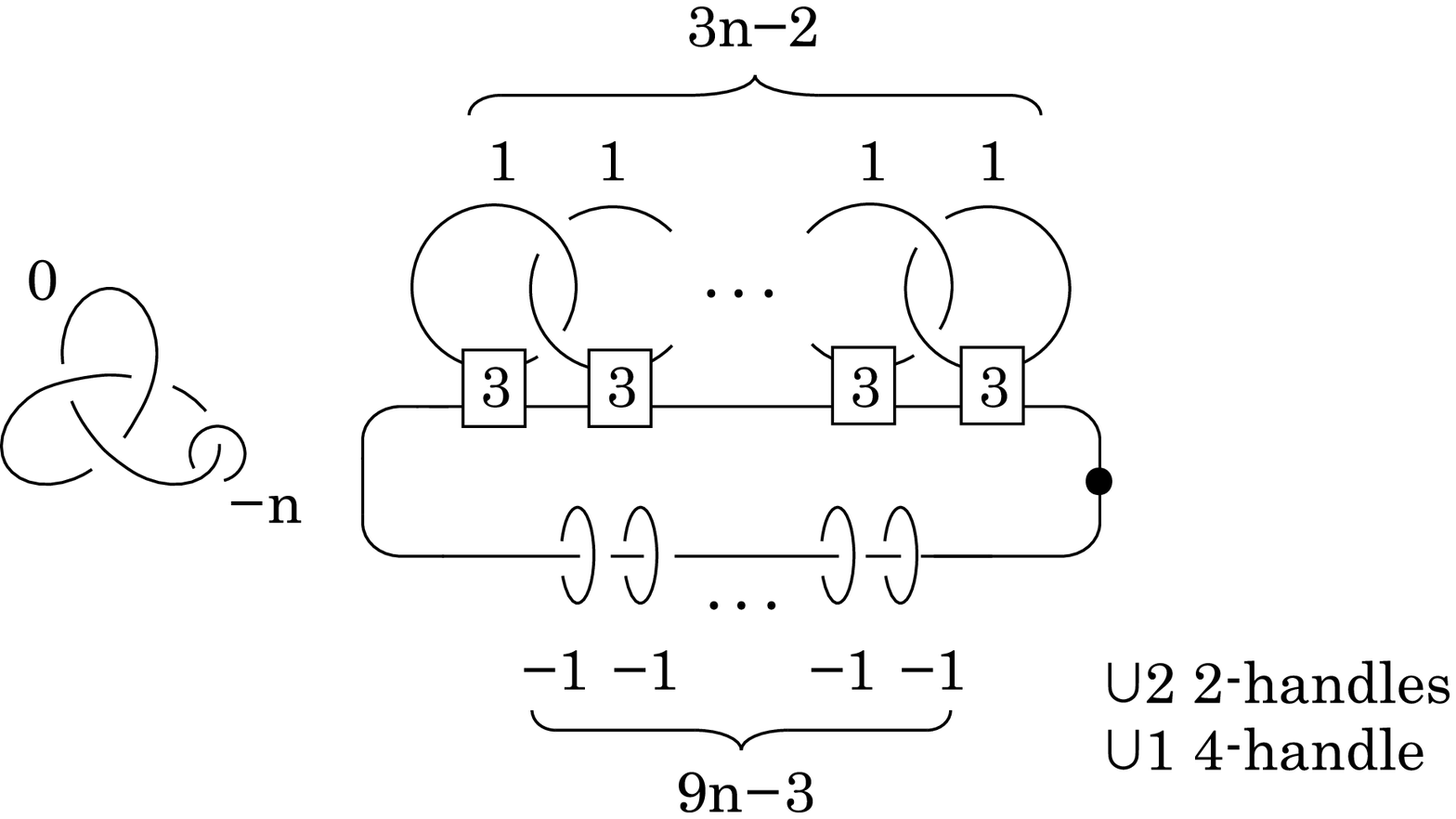}
\caption{$E(n)$}
\label{fig3}
\end{center}
\end{figure}
\end{corollary}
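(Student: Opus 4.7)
The plan is to derive Figure~\ref{fig3} from Figure~\ref{fig2} (Theorem~\ref{th:GS}) through a sequence of Kirby moves confined to the cusp neighborhood, leaving the rest of the handle diagram of $E(n)$ essentially unchanged. The cusp neighborhood in Figure~\ref{fig2} consists of a dotted circle, one of its $(-1)$-framed meridians, and the leftmost $0$-framed unknot. What needs to be exhibited is the standard diffeomorphism between this three-handle local picture and a single $0$-framed right-handed trefoil knot. This is the well-known description of a regular neighborhood of a cusp fiber as a $0$-framed trefoil trace, arising from the fact that a cusp fiber is an immersed sphere with one positive double point and self-intersection $0$.

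The key steps I would carry out are: (i) isolate the cusp neighborhood inside Figure~\ref{fig2}, treating its boundary as fixed; (ii) slide the $0$-framed unknot once over the $(-1)$-framed meridian so that it picks up linking with the dotted circle in a way that mimics the trefoil crossings; (iii) after the slide, the dotted circle and the $(-1)$-framed meridian form a canceling $1$-handle/$2$-handle pair, which I would then cancel; (iv) check that the framing of the surviving $2$-handle, after accounting for the slide and cancellation, is exactly $0$, and that its attaching circle is a right-handed trefoil. The remaining $2$-handles of the $E(n)$ diagram are carried along during these moves, producing Figure~\ref{fig3}. The identification of this $0$-framed trefoil neighborhood with a cusp fiber neighborhood then follows from Theorem~\ref{th:GS}, since the moves stay inside the identified cusp region.

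The main obstacle I expect is bookkeeping: tracking how the other $2$-handles of the $E(n)$ diagram cross the $(-1)$-framed meridian and the dotted circle during the slide and cancellation, and verifying that these crossings can be arranged to leave the ambient diagram in the form shown in Figure~\ref{fig3} without introducing spurious linking between the trefoil and the remaining handles. However, this is a routine Kirby-calculus computation, and the essential local move---converting a $0$-framed unknot together with a dotted circle and a $(-1)$-framed meridian into a $0$-framed trefoil---is standard and appears, for example, in Gompf--Stipsicz~\cite{GS}. With that local move in hand, the corollary reduces to a diagrammatic verification.
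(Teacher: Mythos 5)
Your key move is the same as the paper's: the entire proof given there is the single sentence ``in Figure~\ref{fig2}, pull off the leftmost $0$-framed unknot from the dotted circle by sliding over the $-1$-framed knot,'' which is your step (ii) (though note the slide \emph{removes} the geometric linking of the $0$-framed unknot with the dotted circle rather than ``picking up'' linking; the trefoil crossings come from the clasp and the $-1$-twist acquired in the slide, not from linking with the $1$-handle). Up to that point your proposal is essentially the paper's argument.

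Step (iii), however, is a genuine misstep. The paper does \emph{not} cancel the dotted circle against the $-1$-framed meridian, and doing so would not yield Figure~\ref{fig3}. The dotted circle in Figure~\ref{fig2} carries all of the remaining $-1$-framed meridians of the $E(n)$ diagram, so cancelling the $1$-handle against one of them forces every other strand running through the $1$-handle to be rerouted over the cancelling $2$-handle, with the attendant twisting; this destroys the form of the ambient diagram rather than ``carrying the remaining $2$-handles along.'' Figure~\ref{fig3} in fact retains the dotted circle and the other meridians: they are used downstream (Corollary~\ref{cor2:GS}, Proposition~\ref{prop:disjoint C_p}), and the $1$-handle is only cancelled at the very end of the proof of Proposition~\ref{prop:disjoint C_p}. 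The correct statement is that after the slide the $0$-framed unknot becomes a $0$-framed trefoil \emph{split off from} the dotted circle and its meridians, which remain in place; the cusp neighborhood is then the trace of that trefoil alone. Dropping your step (iii) entirely (and the framing bookkeeping in (iv) that depends on the cancellation) recovers the paper's proof.
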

\begin{proof}
In Figure~$\ref{fig2}$, pull off the leftmost 0-framed unknot from the dotted circle by sliding over $-1$-framed knot. 
\end{proof}

\begin{corollary}\label{cor2:GS}
For each $p_1,p_2,\dots,p_n\geq 2$, the elliptic surface $E(p_1+p_2+\dots+p_n)$ has a handle decomposition as in Figure~$\ref{fig4}$. The obvious cusp neighborhood is isotopic to the regular neighborhood of a cusp fiber of $E(p_1+p_2+\dots+p_n)$. Here $k=9(p_1+p_2+\dots+p_n)-5n-4$.%
\begin{figure}[ht!]
\begin{center}
\includegraphics[width=3.7in]{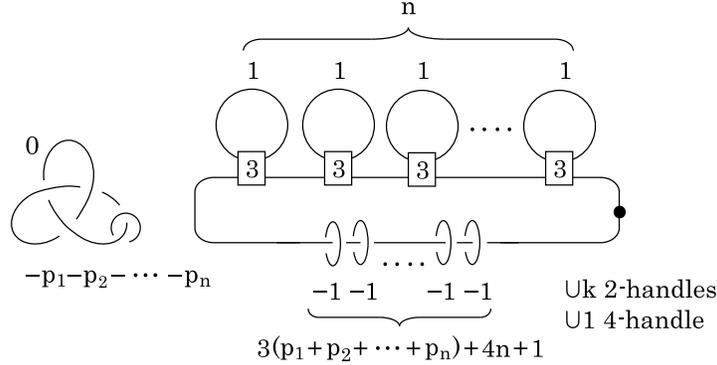}
\caption{$E(p_1+p_2+\dots+p_n)$ $(p_1,p_2,\dots,p_n\geq 2)$}
\label{fig4}
\end{center}
\end{figure}
\end{corollary}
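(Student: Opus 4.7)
The plan is to derive Corollary~\ref{cor2:GS} from Corollary~\ref{cor1:GS} by a sequence of handle slides that preserve the $0$-framed trefoil knot, i.e.\ the cusp neighborhood. Write $m=p_1+p_2+\cdots+p_n$ and apply Corollary~\ref{cor1:GS} to $E(m)$. The resulting Figure~\ref{fig3} displays $E(m)$ as a long chain of handles attached to the $0$-framed trefoil, where roughly speaking each unit of $m$ contributes a fixed block of handles coming from the repeated $E(1)$-building-block structure of $E(m)=E(1)\#_{f}\cdots\#_{f}E(1)$. The goal is to regroup the chain of length $m$ into $n$ consecutive clusters of lengths $p_1,\dots,p_n$ and perform a small simplification at each of the $n-1$ internal junctions, producing exactly the picture of Figure~\ref{fig4}.

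I would carry this out by induction on $n$. The case $n=1$ is essentially Corollary~\ref{cor1:GS} itself after identifying the handle count $k=9p_1-9$, which matches $9p_1-5(1)-4$. For the inductive step, regard the diagram of $E(m)$ from Corollary~\ref{cor1:GS} as the diagram for $E(p_1+\cdots+p_{n-1})$ (which by inductive hypothesis can be put in Figure~\ref{fig4}-form) together with an extra chain of $p_n$ unit blocks. At the junction between the new chain and the previous clusters, one applies the same type of slide used in the proof of Corollary~\ref{cor1:GS}---pulling a $0$-framed unknot off a dotted circle by sliding it over a $-1$-framed meridian---which creates a cancelling $1$-handle/$2$-handle pair at the seam. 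Removing the pair produces the $n$-th cluster of Figure~\ref{fig4} attached to the previous picture.

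The handle count $k=9m-5n-4$ is then pure bookkeeping: each new cluster contributes $9p_i$ handles from the unit blocks minus $5$ handles cancelled at its junction, so the increment is $9p_i-5$ per inductive step; summing and adding a one-time constant $-4$ associated with the cusp-neighborhood end gives $9m-5n-4$. Since every slide is local and takes place away from the $0$-framed trefoil, the preservation of the cusp neighborhood as the regular neighborhood of a cusp fiber is inherited directly from Corollary~\ref{cor1:GS}.

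The main (and only) obstacle is the detailed Kirby-calculus verification that the local slide at each junction produces precisely the configuration depicted in Figure~\ref{fig4} rather than something isotopic to it only up to further moves, and that the handle count agrees with $k$ on the nose. There is no conceptual difficulty beyond Corollary~\ref{cor1:GS}; once the inductive step is laid out in pictures, the statement follows.
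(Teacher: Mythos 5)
The paper gives no proof of this corollary at all: it is stated immediately after Corollary~\ref{cor1:GS} and left as a routine consequence of the Gompf--Stipsicz decomposition in Theorem~\ref{th:GS}, with the actual handle slides deferred to figures (the proof of Proposition~\ref{prop:disjoint C_p} simply starts from Figure~\ref{fig4} as given). So there is no written argument to compare yours against. Your overall strategy --- start from the periodic Gompf--Stipsicz picture of $E(p_1+\cdots+p_n)$, regroup the repeating blocks into $n$ clusters of sizes $p_1,\dots,p_n$, simplify at each of the $n-1$ junctions away from the cusp, and leave the $0$-framed trefoil untouched so that its identification with a cusp-fiber neighborhood persists --- is the natural one and is consistent with how the paper handles Corollary~\ref{cor1:GS} and its later figure-driven arguments.

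As a proof, however, your proposal is still only a plan, and the two substantive claims are exactly the ones you do not establish. First, that the junction slides produce precisely the configuration of Figure~\ref{fig4} is the entire geometric content, and you defer it. Second, the count $k=9(p_1+\cdots+p_n)-5n-4$ is reverse-engineered rather than derived: the decomposition ``$9p_i$ per block, minus $5$ per junction, plus a one-time $-4$'' is chosen to reproduce the stated formula, not extracted from an actual tally of the handles in Figure~\ref{fig2} or Figure~\ref{fig3}. You should also be more careful with the base case: Figure~\ref{fig4} depends on the partition $(p_1,\dots,p_n)$ and displays explicit partition-dependent structure (this is why $k$ involves $n$), so even for $n=1$ it is not literally Figure~\ref{fig3}; isolating the displayed handles from the generic Gompf--Stipsicz picture already requires work. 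None of this is conceptually difficult, but until the Kirby pictures are actually drawn the corollary is asserted rather than proved --- which, to be fair, is the level of detail the paper itself supplies.
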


\begin{remark}In this section, we do not use Corollary~\ref{cor2:GS}, it will be used in Section~\ref{sec:construction}.
\end{remark}

We can now easily get the following proposition.
 
\begin{proposition}\label{prop:cork}
The $4$-manifold $E(n)\# \overline{\mathbf{C}\mathbf{P}^2}$ $(n\geq 2)$ has a handle decomposition as in Figure~\ref{fig5}. The obvious cusp neighborhood in the figure is isotopic to the regular neighborhood of a cusp fiber of $E(n)$. Note that the figure contains $W_1$. 
\begin{figure}[ht!]
\begin{center}
\includegraphics[width=3.6in]{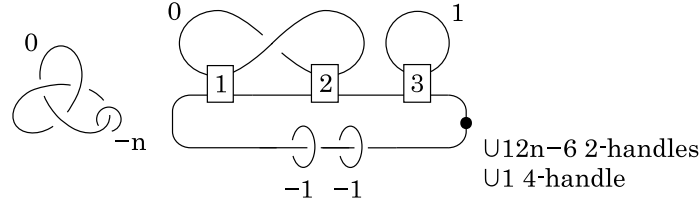}
\caption{$E(n)\# \overline{\mathbf{C}\mathbf{P}^2}$ $(n\geq 2)$}
\label{fig5}
\end{center}
\end{figure}
\end{proposition}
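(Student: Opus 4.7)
The plan is to start from Corollary~\ref{cor1:GS}'s handle decomposition of $E(n)$ in Figure~\ref{fig3}, which already exhibits the cusp neighborhood as a $0$-framed trefoil sitting inside a diagram built from additional $-1$- and $-2$-framed $2$-handles together with a dotted circle. Forming the connected sum $E(n)\# \overline{\mathbf{CP}^2}$ amounts to adjoining a disjoint $-1$-framed unknot to this diagram. The remaining task is a Kirby-calculus manipulation: slide handles among the $-1$-framed circles of $E(n)$ and the new $-1$-framed generator of $\overline{\mathbf{CP}^2}$ until the resulting picture agrees with Figure~\ref{fig5}, and in the course of doing so expose a Mazur-type dotted-circle together with a $0$-framed $2$-handle forming the symmetric link of $W_1$.

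Concretely, I would first isotope and slide the $-1$-framed meridians of the dotted circle in Figure~\ref{fig3} so that one of them acquires a linking with the new $-1$-framed $\overline{\mathbf{CP}^2}$ generator, and then slide the neighboring $0$-framed component over these $-1$-framed circles to redistribute clasps. After these slides the dotted circle of $E(n)$ together with a single $0$-framed $2$-handle appear exactly in the symmetric-link pattern of $W_1$ from Figure~\ref{fig1}, while the rest of the handles encode the complementary piece of $E(n)\#\overline{\mathbf{CP}^2}$ shown in Figure~\ref{fig5}. Checking that the resulting sub-handlebody really is $W_1$ is then an isotopy verification against Figure~\ref{fig1}.

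Two points require care. First, the $0$-framed trefoil identified in Corollary~\ref{cor1:GS} as the cusp neighborhood must not be touched by any of the handle slides; if the slides are confined to the region containing the dotted circle, its $-1$-framed meridians, and the new $\overline{\mathbf{CP}^2}$ generator, then the trefoil is carried along unchanged, and Corollary~\ref{cor1:GS} guarantees that it is isotopic to a regular neighborhood of a cusp fiber of $E(n)\subset E(n)\#\overline{\mathbf{CP}^2}$. Second, one must confirm that the dotted circle and the distinguished $0$-framed $2$-handle isolated after the slides are genuinely unlinked from the trefoil, so that $W_1$ appears as a \emph{sub-handlebody} rather than merely as a sub-link.

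The main obstacle is the pictorial one: finding the precise sequence of handle slides that simultaneously (a) produces the $W_1$ symmetric-link pattern of Figure~\ref{fig1} visibly in the diagram and (b) leaves the trefoil cusp neighborhood from Corollary~\ref{cor1:GS} untouched. This is entirely routine once the diagram is drawn, but it is the only nontrivial step; the rest is standard Kirby calculus bookkeeping. The construction should also parallel the analogous moves performed in the proofs of Theorems~3.4 and 3.5 of \cite{AY1}, which the authors cite as the model.
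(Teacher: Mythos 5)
Your proposal follows essentially the same route as the paper: the authors also start from the decomposition of Corollary~\ref{cor1:GS}, perform handle slides on a sub-handlebody of $E(n)$ kept disjoint from the trefoil cusp neighborhood (their Figure~\ref{fig13}), and introduce the $\overline{\mathbf{C}\mathbf{P}^2}$ summand by a blow-up to arrive at Figure~\ref{fig5} with the $W_1$ pattern visible. The only (immaterial) difference is ordering --- the paper slides first and blows up last, whereas you adjoin the $-1$-framed unknot first and then slide --- and, as you acknowledge, the actual content in either case is the explicit sequence of Kirby moves recorded in the figures.
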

\vspace{-1.5\baselineskip}
\begin{proof}
It follows from Corollary~\ref{cor1:GS} that the submanifold of $E(n)$ in the first diagram of Figure~\ref{fig13} is disjoint from the cusp neighborhood of $E(n)$. Slide handles as in Figure~\ref{fig13}. Then we clearly get Figure~\ref{fig5} by blowing up. 
\end{proof}

\begin{definition}
$(1)$ Let $X$ be the smooth $4$-manifold obtained from $E(n)\# \overline{\mathbf{C}\mathbf{P}^2}$ $(n\geq 2)$ by removing the copy of $W_1$ in Figure~\ref{fig5} and regluing it via $f_1$. Note that $X$ contains a cusp neighborhood because the copy of $W_1$ in $E(n)\# \overline{\mathbf{C}\mathbf{P}^2}$ is disjoint from the cusp neighborhood in Figure~\ref{fig5}.\smallskip \\
$(2)$ Let $K$ be a knot in $S^3$ and $X_K$ the Fintushel-Stern's knot surgery (\cite{FS2}) with $K$ in the cusp neighborhood of $X$. \smallskip \\
$(3)$ Let $E(n)_K$ be the knot surgery with $K$ in the cusp neighborhood of $E(n)$. 
\end{definition}\smallskip 
The following corollary clearly follows from Proposition~\ref{prop:cork} and the definitions above. 
\begin{corollary}\label{cor:1_X_K}
$(1)$ The $4$-manifold $X$ splits off $\mathbf{CP}^2\# 2\overline{\mathbf{C}\mathbf{P}^2}$ as a connected summand. Furthermore, the cusp neighborhood of $X$ is disjoint from $\mathbf{CP}^2\# 2\overline{\mathbf{C}\mathbf{P}^2}$ in this connected sum decomposition. \smallskip \\
$(2)$ The $4$-manifold $X_K$ contains a copy of $W_1$ such that ${E(n)}_K\# \overline{\mathbf{C}\mathbf{P}^2}$ is obtained from $X_K$ by removing the copy of $W_1$ and regluing it via $f_1$.
\end{corollary}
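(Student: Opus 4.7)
The plan is to establish the two parts in sequence: part~(1) by an explicit handle-calculus argument on the diagram of Figure~\ref{fig5} after the cork twist, and part~(2) as a formal consequence of the disjointness of the $W_1$ piece from the cusp neighborhood together with the locality of Fintushel-Stern knot surgery.

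For (1), I would begin with the diagram of $E(n)\# \overline{\mathbf{C}\mathbf{P}^2}$ produced by Proposition~\ref{prop:cork} and implement the involution $f_1$ by interchanging the dot and the $0$-framing in the $W_1$ sublink. After this swap the former $1$-handle becomes a $0$-framed $2$-handle and the former $0$-framed $2$-handle becomes a dotted $1$-handle, while the rest of the diagram, in particular the cusp neighborhood, is unaffected. The expected next step is a short sequence of handle slides using the newly created $0$-framed $2$-handle, combined with its interaction with the $-1$-framed meridian coming from the $\overline{\mathbf{C}\mathbf{P}^2}$ blow-up, to free a $1$-/$2$-handle pair that cancels and to isolate a three-component sublink consisting of one $+1$-framed and two $-1$-framed unknots, all split off from the remaining handles. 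This sublink is the standard handle picture of $\mathbf{C}\mathbf{P}^2\#2\overline{\mathbf{C}\mathbf{P}^2}$, and because the slides involved can be arranged to take place entirely outside the cusp-fiber region, the neck sphere of the resulting smooth connected sum can be chosen disjoint from the cusp neighborhood, which is exactly the content of (1).

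Statement (2) then follows essentially formally. The Fintushel-Stern knot surgery producing $X_K$ from $X$ is performed inside the cusp neighborhood of $X$, which by the definition of $X$ is disjoint from the embedded copy of $W_1$. Consequently an isomorphic copy of $W_1$ persists in $X_K$, sitting in the complement of the surgery region, and the cork twist along this $W_1$ commutes with the knot surgery since the two operations are supported in disjoint submanifolds. Therefore removing $W_1$ from $X_K$ and regluing via $f_1$ has the same effect as first twisting $X$ along $W_1$ (which by definition returns $E(n)\# \overline{\mathbf{C}\mathbf{P}^2}$) and then performing the knot surgery in the same cusp neighborhood, yielding $E(n)_K\# \overline{\mathbf{C}\mathbf{P}^2}$.

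The principal obstacle is the handle-calculus verification in (1): one must describe the precise sequence of slides and cancellations, after the dot-to-framing swap on $W_1$, that exposes $\mathbf{C}\mathbf{P}^2\#2\overline{\mathbf{C}\mathbf{P}^2}$ as a smooth connected summand, and verify that none of these moves enters the cusp neighborhood so that the neck of the splitting lies outside it. Once this handle picture is in place, (2) is immediate from the commutativity of disjoint operations.
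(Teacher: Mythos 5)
Your proposal matches the paper's (essentially unstated) argument: the paper simply asserts that this corollary ``clearly follows'' from Proposition~\ref{prop:cork} and the definitions, and what you describe---performing the dot/zero swap on the $W_1$ sublink of Figure~\ref{fig5} and simplifying the diagram to split off $\mathbf{CP}^2\#2\overline{\mathbf{C}\mathbf{P}^2}$ away from the cusp neighborhood for (1), and the locality of knot surgery in the cusp neighborhood together with its commutativity with the cork twist supported in the disjoint $W_1$ for (2)---is exactly the intended reasoning. No discrepancy to report.
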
%\smallskip 

\begin{corollary}\label{cor:2_X_K}
$(1)$ The $4$-manifold $X$ splits off $S^2\times S^2$ as a connected summand. Furthermore, the cusp neighborhood of $X$ is disjoint from $S^2\times S^2$ in this connected sum decomposition.  Consequently, the Seiberg-Witten invariant of $X$ vanishes. \smallskip \\
$(2)$ The $4$-manifold $X_K$ is diffeomorphic to $X$. In particular, the Seiberg-Witten invariant of $X_K$ vanishes. \smallskip \\
$(3)$ For each knot $K$ in $S^3$, there exists a copy of $W_1$ in $X$ such that ${E(n)}_K\# \overline{\mathbf{C}\mathbf{P}^2}$ is obtained from $X$ by removing the copy of $W_1$ and regluing it via $f_1$.\smallskip \\
$(4)$ If a knot $K$ in $S^3$ has the non-trivial Alexander polynomial, then ${E(n)}_K\# \overline{\mathbf{C}\mathbf{P}^2}$ is homeomorphic but not diffeomorphic to $X$,  in particular $(W_1,f_1)$ is a cork of $X$. 
\end{corollary}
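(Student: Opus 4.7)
The plan is to prove the four parts in sequence, bootstrapping from Corollary~\ref{cor:1_X_K}.

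For part~(1), I would upgrade the $\mathbf{C}\mathbf{P}^2\#2\overline{\mathbf{C}\mathbf{P}^2}$ summand supplied by Corollary~\ref{cor:1_X_K}(1) to an $S^2\times S^2$ summand via the classical diffeomorphism $\mathbf{C}\mathbf{P}^2\#2\overline{\mathbf{C}\mathbf{P}^2}\cong (S^2\times S^2)\#\overline{\mathbf{C}\mathbf{P}^2}$ (a standard identity obtained, e.g., by recognizing $\mathbf{C}\mathbf{P}^2\#\overline{\mathbf{C}\mathbf{P}^2}$ as the first Hirzebruch surface and destabilizing after one further blow-up). Since the cusp neighborhood remains in the complementary summand, and since $b_2^+(X)=2n-1\ge 3$ for $n\ge 2$, both pieces of the resulting decomposition $X\cong X''\#(S^2\times S^2)$ have $b_2^+>0$; the connected-sum vanishing theorem then forces $SW_X=0$.

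For part~(2), I would appeal to the principle, following Fintushel--Stern, that knot surgery along a cusp fiber is trivialized once the ambient manifold splits off an $S^2\times S^2$ summand disjoint from the cusp: the vanishing cycle furnished by the cusp together with a sphere from the $S^2\times S^2$ factor provides the room needed to isotope the knot-surgery gluing back to the standard one, yielding $X_K\cong X$. Combined with~(1), this immediately gives $SW_{X_K}=0$.

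For part~(3), I would combine~(2) with Corollary~\ref{cor:1_X_K}(2): fixing any diffeomorphism $\varphi:X_K\to X$, the image $\varphi(W_1)\subset X$ of the copy of $W_1$ supplied by that corollary is the desired embedding, since the cork twist along $\varphi(W_1)$ reproduces the same gluing as the twist along $W_1\subset X_K$ and hence yields $E(n)_K\#\overline{\mathbf{C}\mathbf{P}^2}$.

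For part~(4), the homeomorphism between $X$ and $E(n)_K\#\overline{\mathbf{C}\mathbf{P}^2}$ is immediate from~(3), since cork twists preserve topological type. For the non-diffeomorphism, I would compare Seiberg--Witten invariants: $SW_X=0$ by~(1), while the Fintushel--Stern formula $SW_{E(n)_K}=\Delta_K(t^2)\cdot SW_{E(n)}$ together with the blow-up formula shows that $SW_{E(n)_K\#\overline{\mathbf{C}\mathbf{P}^2}}\neq 0$ whenever $\Delta_K\neq 1$ and $n\ge 2$. Consequently $X\not\cong E(n)_K\#\overline{\mathbf{C}\mathbf{P}^2}$, and $(W_1,f_1)$ is a cork of $X$. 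The main obstacle will be step~(2): carefully justifying the triviality of knot surgery in the presence of a disjoint $S^2\times S^2$ summand. Although standard in the Fintushel--Stern framework, this is the pivotal technical input — without it the whole chain from $X_K$ back to $X$ collapses, and the final SW comparison has nothing to latch onto.
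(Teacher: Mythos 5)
Your proposal is correct and follows essentially the same route as the paper: upgrade the $\mathbf{CP}^2\#2\overline{\mathbf{C}\mathbf{P}^2}$ summand to $(S^2\times S^2)\#\overline{\mathbf{C}\mathbf{P}^2}$ and apply the connected-sum vanishing theorem for (1), invoke the one-stabilization triviality of knot surgery for (2), transport the copy of $W_1$ through the resulting diffeomorphism for (3), and compare Seiberg--Witten invariants via the Fintushel--Stern formula and the blow-up formula for (4). The one caveat is attribution and rigor in step (2): the stabilization theorem you rely on is due to Akbulut~\cite{A4} and Auckly~\cite{Au} (not Fintushel--Stern), and it should be cited as such rather than justified by the informal ``room to isotope'' heuristic you sketch, which is not a proof.
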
%\smallskip 

\begin{proof}
The claim (1) easily follows from Corollary~\ref{cor:1_X_K}.(1) and the fact that, for every non-spin $4$-manifold $Y$, the $4$-manifold $Y\#\mathbf{CP}^2\# \overline{\mathbf{C}\mathbf{P}^2}$ is diffeomorphic to $Y\#(S^2\times S^2)$. 
The claim (1) and the definition of $X_K$ together with the stabilization theorem of knot surgery by the first author~\cite{A4} and Auckly~\cite{Au} show the claim (2). The claim (3) thus follows from Corollary~\ref{cor:1_X_K}.(2). Since the Seiberg-Witten invariant of ${E(n)}_K\# \overline{\mathbf{C}\mathbf{P}^2}$ does not vanish (Fintushel-Stern~\cite{FS2}), the claim (4) follows from the claim (1).
\end{proof}

Now we can easily prove Theorem~\ref{th:knotting corks}. 

\begin{proof}[Proof of Theorem~$\ref{th:knotting corks}$]
%\begin{proof}[of Theorem~$\ref{th:knotting corks}$]
Let $X_0:=X$, and $K_i$ $(i\geq 1)$ be knots in $S^3$ with mutually different non-trivial Alexander polynomials. Define $X_i = {E(n)}_{K_i}\# \overline{\mathbf{C}\mathbf{P}^2}$. Then the  claim easily follows from Cororally~\ref{cor:2_X_K} and the Fintushel-Stern's formula (\cite{FS2}) of the Seiberg-Witten invariant of $E(n)_{K_i}$. 
\end{proof}

\begin{remark}\label{rem:knotted corks}
$(1)$ Freedman's theorem shows that $X_0$ is homeomorphic to $(2n-1)\mathbf{CP}^2\#10n \overline{\mathbf{C}\mathbf{P}^2}$. Since $X_0$ splits off $\mathbf{CP}^2\# \overline{\mathbf{C}\mathbf{P}^2}$, it is likely that $X_0$ is diffeomorphic to $(2n-1)\mathbf{CP}^2\#10n \overline{\mathbf{C}\mathbf{P}^2}$. \smallskip \\
$(2)$ The complement of the each copy of $W_1$ in $X_0 (\cong X_K)$ given in Corollary~\ref{cor:2_X_K}.(3) is simply connected. This claim easily follows from Figure~\ref{fig5} and the definition of $X_K$. Note that the knot surgered Gompf nuclei ${N_n}_K$ is simply connected. 
\smallskip \\
$(3)$ We proved Theorem~\ref{th:knotting corks} for the cork $(W_1,f_1)$. We can similarly prove Theorem~\ref{th:knotting corks} for many other corks, including $(W_n,f_n)$ and $(\overline{W}_n, \overline{f}_n)$. (For the definition of $(\overline{W}_n, \overline{f}_n)$, see~\cite{AY1}.) See also the proofs of~\cite[Proposition~3.3.(1) and (3)]{AY1}. 
\end{remark}

%%%%%%%%%%%%%%%%%%%%%%%%%%%%%%%%%%%%%%%%%%%%%%%%%%%%
\section{Rational blowdown}
In this section we review the rational blowdown introduced 
by Fintushel-Stern \cite{FS1}. We also introduce a new relation between rational blowdowns and corks.\medskip 

Let $C_p$ and $B_p$ be the smooth $4$-manifolds defined by handlebody diagrams in Figure~\ref{fig6}, and $u_1,\dots,u_{p-1}$ elements of $H_2(C_p;\mathbf{Z})$ given by corresponding $2$-handles in the figure such that $u_i\cdot u_{i+1}=+1$ $(1\leq i\leq p-2)$.
The boundary $\partial C_p$ of $C_p$ is diffeomorphic to the lens space $L(p^2,p-1)$, and also diffeomorphic to the boundary $\partial B_p$ of $B_p$. 
\begin{figure}[ht!]
\begin{center}
\includegraphics[width=3.8in]{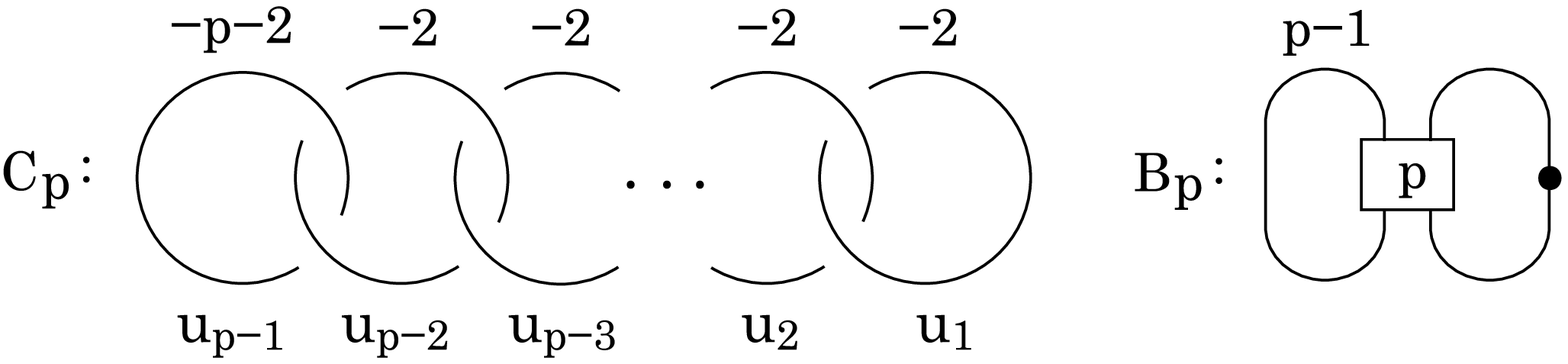}
\caption{}
\label{fig6}
\end{center}
\end{figure}

Suppose that $C_p$ embeds in a smooth $4$-manifold $Z$. 
Let $Z_{(p)}$ be the smooth $4$-manifold obtained from $Z$ by removing $C_p$ and gluing $B_p$ along the boundary. The smooth $4$-manifold $Z_{(p)}$ is called the rational blowdown of $Z$ along $C_p$. Note that $Z_{(p)}$ is uniquely determined up to diffeomorphism by a fixed pair $(Z,C_p)$ (see Fintushel-Stern~\cite{FS1}). 
This operation preserves $b_2^+$, decreases $b_2^-$, may create torsion in the first homology group. \medskip 

Rational blowdown has some relations with corks (\cite{AY1}). In this paper, we give the relation below, similarly to~\cite{AY1}. This relation is a key of our proof of Theorem~\ref{th:disjoint corks} and~\ref{th:involutions of corks}. 

\begin{theorem}\label{th:cork and rbd}Let $D_p$ be the smooth $4$-manifold in Figure~\ref{fig7} (notice $D_p$ is $C_p$ with two $2$-handles attached). 
Suppose that a smooth $4$-manifold $Z$ contains $D_{p}$. Let $Z_{(p)}$ be the rational blowdown of $Z$ along the copy of $C_p$ contained in $D_p$. 
Then the submanifold $D_p$ of $Z$ contains $W_{p-1}$ such that $Z_{(p)}\# (p-1)\overline{\mathbf{C}\mathbf{P}^2}$ is obtained from $Z$ by removing $W_{p-1}$ and regluing via $f_{p-1}$.
\begin{figure}[ht!]
\begin{center}
\includegraphics[width=2.15in]{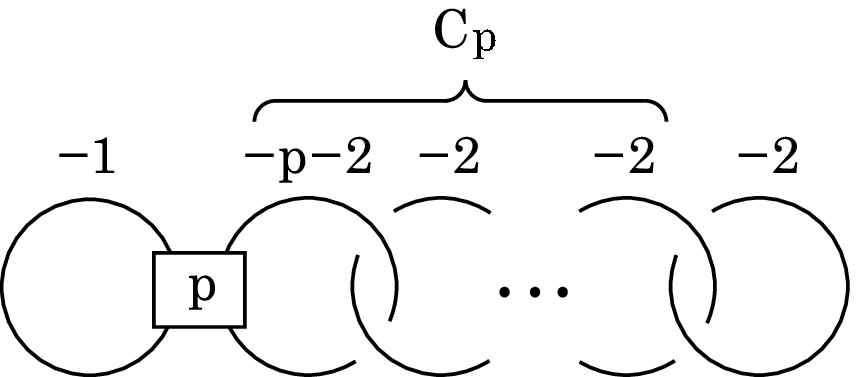}
\caption{$D_p$}
\label{fig7}
\end{center}
\end{figure}
\end{theorem}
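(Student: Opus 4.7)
The plan is to prove this statement by explicit Kirby calculus on the handle diagram of $D_p$ in Figure~\ref{fig7}. First I would record the handle structure of $D_p$: the chain portion of $2$-handles with framings $-(p+2),-2,\dots,-2$ forming $C_p$ (realizing the classes $u_1,\dots,u_{p-1}$), together with the two additional $2$-handles that extend $C_p$ to $D_p$. The two extra handles are what make the cork picture possible — they allow us to "promote" part of the diagram into a dotted $1$-handle.

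Second, I would locate $W_{p-1}\subset D_p$ by a sequence of handle slides. The strategy, parallel to Section~4 of \cite{AY1}, is to use the two extra $2$-handles to turn one of the $2$-handles of the $C_p$ chain (together with a meridional $2$-handle nearby) into the symmetric $(\text{dot},0)$ link of Figure~\ref{fig1} describing $W_{p-1}$, in such a way that the rest of $D_p$ lies outside this copy and is unaffected. Concretely, after the slides one should see the $W_{p-1}$ symmetric link sitting inside $D_p$ with the remaining $-2$-framed meridians and the other handles forming the complementary portion.

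Third, I would apply the cork involution $f_{p-1}$, that is, swap the dot and the $0$ in the $W_{p-1}$ piece, and then perform further handle slides (inverse to the ones used to exhibit the $W_{p-1}$) to recognize the result. On the $C_p$-part of the diagram these cancel/convert the linear chain into the standard handlebody of the rational ball $B_p$, thus realizing the rational blowdown on $Z$; simultaneously $p-1$ of the meridional $2$-handles, freed by the swap, become $-1$-framed unknots split off from the rest, producing the connected summand $(p-1)\overline{\mathbf{CP}^2}$. Since all slides take place inside $D_p$ and the complement of $W_{p-1}$ is untouched by the cork twist, this exhibits $Z_{(p)}\#(p-1)\overline{\mathbf{CP}^2}$ as the cork twist of $Z$ along $W_{p-1}$, as desired.

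The main obstacle will be the careful bookkeeping in the Kirby moves: (i) verifying that the $W_{p-1}$ subhandlebody is truly embedded (both framings and the symmetric link pattern must match Figure~\ref{fig1}), (ii) tracking the linking data of the extra $2$-handles so that after the $f_{p-1}$-twist they indeed split off as $(p-1)\overline{\mathbf{CP}^2}$, and (iii) checking that the chain portion transforms into the standard $B_p$ diagram of Figure~\ref{fig6}. These verifications are analogous to the arguments in \cite{AY1}, and once the diagrams are drawn the identification reduces to standard handle-slide sequences together with blowdowns of $-1$-spheres.
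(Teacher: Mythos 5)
Your proposal follows essentially the same route as the paper's proof: both exhibit $W_{p-1}$ inside $D_p$ by explicit handle slides (starting from the handle decomposition of $D_p$ coming from the rational blowdown procedure of \cite[Section~8.5]{GS}) and then identify the cork twist $f_{p-1}$ with the dot/zero exchange that realizes $Z_{(p)}\#(p-1)\overline{\mathbf{C}\mathbf{P}^2}$. The one point to make explicit when carrying this out --- and the only non-routine idea in the paper's argument --- is that the intermediate ``promotion'' to a dotted $1$-handle must be arranged as a swap of a dot with a $0$-framed unknot linking it geometrically once, so that it amounts to removing and regluing a $B^4$ and therefore changes neither the diffeomorphism type of $D_p$ nor its embedding in $Z$.
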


\begin{proof}
We can easily get a handle decomposition of $D_p$ in the first diagram of Figure~$\ref{fig14}$, following the procedure in~\cite[Section~8.5]{GS}. (See also~\cite[Figure~14 and 15]{AY1}.) Slide handles as in Figure~$\ref{fig14}$. In the third diagram of Figure~$\ref{fig14}$, we can find a $0$-framed unknot which links the dotted circle geometrically once. Replace this dot and $0$ as in the first row of Figure~$\ref{fig15}$. This operation corresponds to removing $B^4$ in the submanifold $D_p$ of $Z$ and regluing $B^4$. Since every self-diffeomorphism of $S^3$ extends to a self-diffeomorphism of $B^4$, the operation above keeps the diffeomorphism type of $D_p$ and the embedding of $D_p$ into $X$ up to isotopy (In our situation, we can easily show this claim by checking that the operation above corresponds to canceling the 1-handle/2-handle pair and introducing a 1-handle/2-handle pair differently). As a consequence, we can easily find $W_{p-1}$ in $D_p$. Note that Figure~$\ref{fig16}$ is isotopic to the standard diagram of $W_{p-1}$. By removing $W_{p-1}$ in the submanifold $D_p$ of $Z$ and regluing it via $f_{p-1}$, we get the lower diagram of Figure~$\ref{fig15}$. 

The rational blowdown procedure in~\cite[Section~8.5]{GS} shows that $Z_{(p)}\# (p-1)\overline{\mathbf{C}\mathbf{P}^2}$ is obtained by replacing the dot and $0$ as in the left side of Figure~$\ref{fig15}$. Hence, we obtain $Z_{(p)}\# (p-1)\overline{\mathbf{C}\mathbf{P}^2}$ from $Z$ by removing $W_{p-1}$ and regluing it via $f_{p-1}$. 
\end{proof}

%%%%%%%%%%%%%%%%%%%%%%%%%%%%%%%%%%%%%%%%%%%%%%%%%%%%%%%%
\section{Construction}\label{sec:construction}
In this section, we construct the examples of Theorem~\ref{th:disjoint corks} and~\ref{th:involutions of corks}, by imitating rational blowdown constructions in~\cite{Y1} and~\cite{Y2}. \medskip 

Let $T$ be the class of a regular fiber of $E(p_1+p_2+\dots+p_n)$ in $H_2(E(p_1+p_2+\dots+p_n);\mathbf{Z})$. Let $e_1,e_2,\dots,e_n$ be the standard basis of $H_2(n\overline{\mathbf{C}\mathbf{P}^2};\mathbf{Z})$ such that $e_i^2=-1$ $(1\leq i\leq n)$ and $e_i\cdot e_j=0$ $(i\neq j)$. 

\begin{proposition}\label{prop:disjoint C_p}
For each $n\geq 1$ and each $p_1,p_2,\dots,p_n\geq 2$, 
the $4$-manifold 
$E(p_1+p_2+\dots+p_n)\# n\overline{\mathbf{C}\mathbf{P}^2}$ admits a handle decomposition as in Figure~\ref{fig8}. The obvious cusp neighborhood in the figure is isotopic to the regular neighborhood of a cusp fiber of $E(p_1+p_2+\dots+p_n)$. The homology classes in the figure represent the homology classes given by corresponding $2$-handles. Here $k=11(p_1+p_2+\dots+p_n)+n-4$. 
\begin{figure}[ht!]
\begin{center}
\includegraphics[width=4.4in]{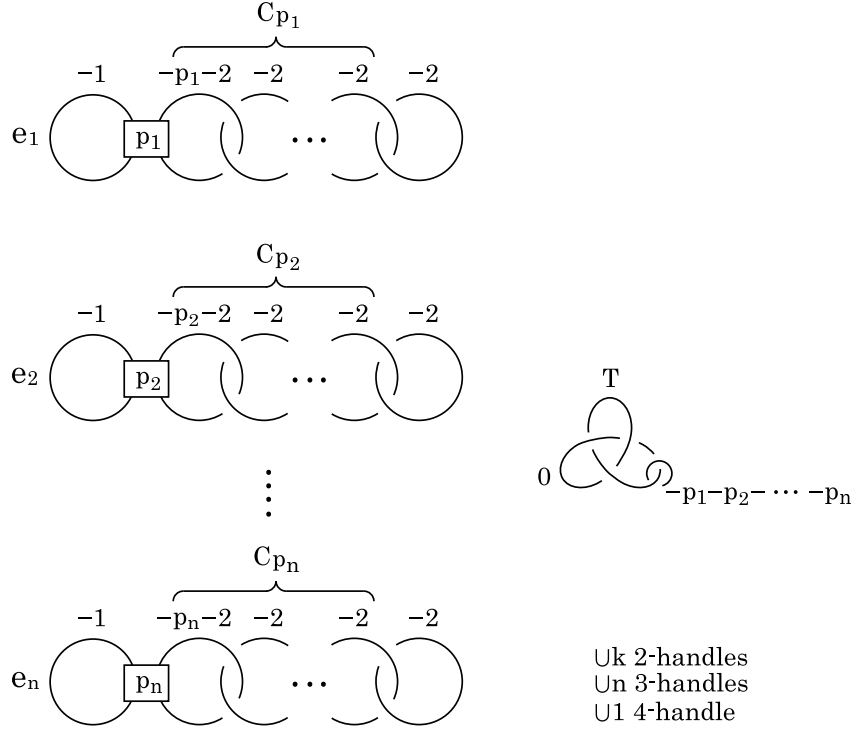}
\caption{$E(p_1+p_2+\dots+p_n)\# n\overline{\mathbf{C}\mathbf{P}^2}$ $(p_1,p_2,\dots,p_n\geq 2)$}
\label{fig8}
\end{center}
\end{figure}
\end{proposition}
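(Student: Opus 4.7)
The plan is to take the handle picture of $E(p_1+\dots+p_n)$ supplied by Corollary~\ref{cor2:GS} and modify it into the form of Figure~\ref{fig8}, closely imitating the rational blowdown constructions of~\cite{Y1,Y2}. Concretely, I would first form the connected sum with $n$ copies of $\overline{\mathbf{CP}^2}$ by adding $n$ disjoint $-1$-framed unknots $e_1,\dots,e_n$ to Figure~\ref{fig4}, placed so that each $e_i$ is linked with the portion of the diagram corresponding to the parameter $p_i$ and placed well away from the $0$-framed trefoil that carries the cusp neighborhood.

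Next, for each $1\le i\le n$ I would work locally in the $i$-th block of the diagram. Figure~\ref{fig4} already has a chain of $-2$-framed $2$-handles associated to each $p_i$ (this is exactly what makes $k$ grow linearly with $s=p_1+\dots+p_n$). Using the meridian $e_i$ of framing $-1$ introduced in the previous step, I would perform an iterated sequence of handle slides along this chain, converting its leftmost $2$-handle into one of framing $-(p_i+2)$ while leaving the remaining $p_i-2$ handles $-2$-framed, and producing two additional $2$-handles that complete the configuration $D_{p_i}$ from Theorem~\ref{th:cork and rbd}. Because these slides are carried out in disjoint local neighborhoods and never involve the cusp fiber itself, the $n$ resulting copies of $D_{p_i}$ are pairwise disjoint, all are disjoint from the cusp neighborhood, and the cusp neighborhood is still isotopic to the regular neighborhood of a cusp fiber of $E(p_1+\dots+p_n)$. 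The homology labels $T$ and $e_i$ in the target diagram are then read off by tracking how the generators are affected by the slides, which is routine.

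The framing count $k=11(p_1+\dots+p_n)+n-4$ is just bookkeeping: one starts from $9(p_1+\dots+p_n)-5n-4$ in Corollary~\ref{cor2:GS}, gains $n$ handles from connect-summing the $\overline{\mathbf{CP}^2}$'s, and accumulates the remaining $2(p_1+\dots+p_n)+5n$ framed handles as a by-product of the slides used to expose the $D_{p_i}$ configurations. The principal obstacle is the diagrammatic calculus itself: one has to verify that the local slides within each block really do produce a $D_{p_i}$ with the prescribed framings and linking, and simultaneously that they do not disturb the other blocks or the cusp neighborhood. Once that explicit Kirby calculus is done, the proposition follows by inspection.
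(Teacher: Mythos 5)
Your overall strategy is the paper's strategy: start from the diagram of $E(p_1+\cdots+p_n)$ in Figure~\ref{fig4}, work block by block, blow up, and slide handles until a copy of $D_{p_i}$ appears in each block, disjoint from the cusp neighborhood, then repeat. However, there is a genuine gap in how you propose to carry this out. The paper's first move in each block is to \emph{introduce a canceling $2$-handle/$3$-handle pair} (Figure~\ref{fig17}) before any sliding or blowing up; this is the standard trick from \cite{Y1,Y2} and \cite[Section~8.5]{GS} that supplies the extra $2$-handles needed to build the $C_{p_i}$-configuration, with the $3$-handles carried along harmlessly. Your outline omits this entirely, and the omission surfaces in your bookkeeping: you claim the diagram ``accumulates the remaining $2(p_1+\cdots+p_n)+5n$ framed handles as a by-product of the slides,'' but handle slides never change the number of handles of any index. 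The only sources of new $2$-handles are blow-ups (which account for just $n$ of them, and one of these is later spent canceling the $1$-handle) and the introduction of canceling pairs. Without the $2$-handle/$3$-handle pairs your sequence of slides cannot produce the handle count $k=11(p_1+\cdots+p_n)+n-4$ of Figure~\ref{fig8}, nor the two extra $2$-handles that turn each $C_{p_i}$ into $D_{p_i}$.

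A secondary point: forming the connected sum with $\overline{\mathbf{CP}^2}$ is represented by a $-1$-framed unknot \emph{split from} the rest of the diagram; inserting a $-1$-framed unknot already linked with the $i$-th block is an honest $2$-handle attachment that changes the manifold. What you want is to blow up (split unknot) and then slide the relevant components over it, which changes their framings by $-1$ each time --- this is in fact where the framing $-(p_i+2)$ on the end of the chain comes from, so the framing changes cannot be waved away. The paper performs these blow-ups in the middle of the slide sequence (Figure~\ref{fig18}), not at the outset, precisely so that the exceptional class $e_i$ ends up in the right homological position. Your identification of the remaining work as ``explicit Kirby calculus'' is accurate, but the calculus you sketch is not the one that closes.
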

%\vspace{-1.5\baselineskip }
\begin{proof}
We begin with the diagram of $E(p_1+p_2+\dots+p_n)$ in Figure~\ref{fig4}. Introduce a canceling $2$-handle/$3$-handle pair and slide handles as in Figure~\ref{fig17}. An isotopy gives the first diagram of Figure~\ref{fig18}. Slide handles and blow up as in Figure~\ref{fig18}. Handle slides give the first diagram of Figure~\ref{fig19}. Slide handles as in Figure~\ref{fig19}. We now have the diagram of $D_p$. By repeating this process and canceling the $1$-handle with a $-1$-framed 2-handle, we get Figure~\ref{fig8} of $E(p_1+p_2+\dots+p_n)\# n\overline{\mathbf{C}\mathbf{P}^2}$.
\end{proof}

\begin{definition}\label{def:disjoint cork}
$(1)$ Define $Y_0:=E(p_1+p_2+\dots+p_n)\# n\overline{\mathbf{C}\mathbf{P}^2}$. Let $Y_i'$ $(1\leq i\leq n)$ be the rational blowdown of $Y_0$ along the copy of $C_{p_i}$ in Figure~\ref{fig8}. Put $Y_i:=Y_i'\# (p_i-1)\overline{\mathbf{C}\mathbf{P}^2}$.\medskip \\
$(2)$ For $k_1,k_2,\dots,k_n\geq 1$, let $W(k_1,k_2,\dots,k_n)$ be the boundary sum $W_{k_1}\natural W_{k_2}\natural \cdots \natural W_{k_n}$. Figure~\ref{fig9} is a diagram of $W(k_1,k_2,\dots,k_n)$. Let $f^i(k_1,k_2,\dots,k_n)$ be the involution on the boundary $\partial W(k_1,k_2,\dots,k_n)$ obtained by replacing the dot and zero of the component of $W_{k_i}$.%
\begin{figure}[ht!]
\begin{center}
\includegraphics[width=4.3in]{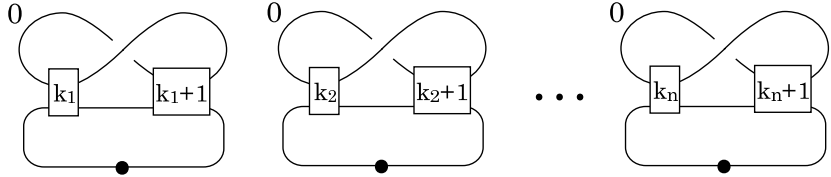}
\caption{$W(k_1,k_2,\dots,k_n)$}
\label{fig9}
\end{center}
\end{figure}
\end{definition}

\begin{lemma}
For each $k_1,k_2,\dots,k_n\geq 1$, the manifold $W(k_1,k_2,\dots,k_n)$ is a compact contractible Stein $4$-manifold.
\end{lemma}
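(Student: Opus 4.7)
The plan is to verify the three required properties — compact, contractible, Stein — one at a time, with the Stein property being the only nontrivial point.

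Compactness is immediate since $W(k_1,\dots,k_n)$ is a finite boundary sum of the compact manifolds $W_{k_i}$. For contractibility, recall that the boundary sum $A\natural B$ is homotopy equivalent to the wedge $A\vee B$ (it deformation retracts onto the union of the two factors joined along a single point in the identified $B^3\subset\partial A=\partial B$). Since each $W_{k_i}$ is contractible by Theorem~\ref{th:cork}, an easy induction using Van Kampen's theorem and Mayer--Vietoris shows that $W(k_1,\dots,k_n)$ is simply connected with trivial homology, hence contractible by Whitehead's theorem.

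For the Stein property, the expected main obstacle, I would invoke Eliashberg's characterization (as formulated by Gompf): a handlebody consisting of a single $0$-handle, some $1$-handles, and $2$-handles attached along a Legendrian link in $(\#_k S^1\times S^2,\xi_{\mathrm{std}})$ with framing one less than the Thurston--Bennequin number admits a Stein structure. By Theorem~\ref{th:cork} (see also the handle calculus in \cite{AY1}), each $W_{k_i}$ admits such a Stein handle presentation arising from the symmetric diagram in Figure~\ref{fig1}, in which the $2$-handle is Legendrian with $\mathrm{tb}=1$ and framing $0$. I would then realize $W(k_1,\dots,k_n)$ by placing these Legendrian diagrams in $n$ disjoint Darboux balls inside $(\#_{\sum k_i}S^1\times S^2,\xi_{\mathrm{std}})$; the resulting handle decomposition has one $0$-handle, the union of the $1$-handles, and the disjoint union of Legendrian $2$-handles of the $W_{k_i}$, each still attached with framing $\mathrm{tb}-1$. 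This is precisely the handle decomposition depicted in Figure~\ref{fig9}, so Eliashberg's theorem endows $W(k_1,\dots,k_n)$ with a Stein structure.

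Equivalently, one can argue in the Weinstein category: each $W_{k_i}$ is a Weinstein domain, and the boundary sum of Weinstein domains, performed by gluing along a Darboux ball in the boundary, is again a Weinstein domain. Either viewpoint gives the Stein conclusion, completing the lemma.
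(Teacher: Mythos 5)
Your proposal is correct and, for the only substantive point (the Stein property), follows essentially the same route as the paper: put the combined handle diagram of the boundary sum in standard form with Legendrian $2$-handles and apply the Eliashberg criterion that each framing be less than the Thurston--Bennequin number (cf.\ Figure~\ref{fig10}). The explicit checks of compactness and contractibility, and the alternative Weinstein boundary-sum remark, are fine but are left implicit in the paper.
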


\begin{proof}
We can check this by changing the $1$-handle notations of $W(k_1,k_2,\dots,k_n)$, and putting the 2-handles in Legendrian positions. (For such a diagram of $W_n$, see Figure~\ref{fig10}.) Now all we have to check is the Eliashberg criterion: the framings on the $2$-handles are less than Thurston-Bennequin number. 
\begin{figure}[ht!]
\begin{center}
\includegraphics[width=1.9in]{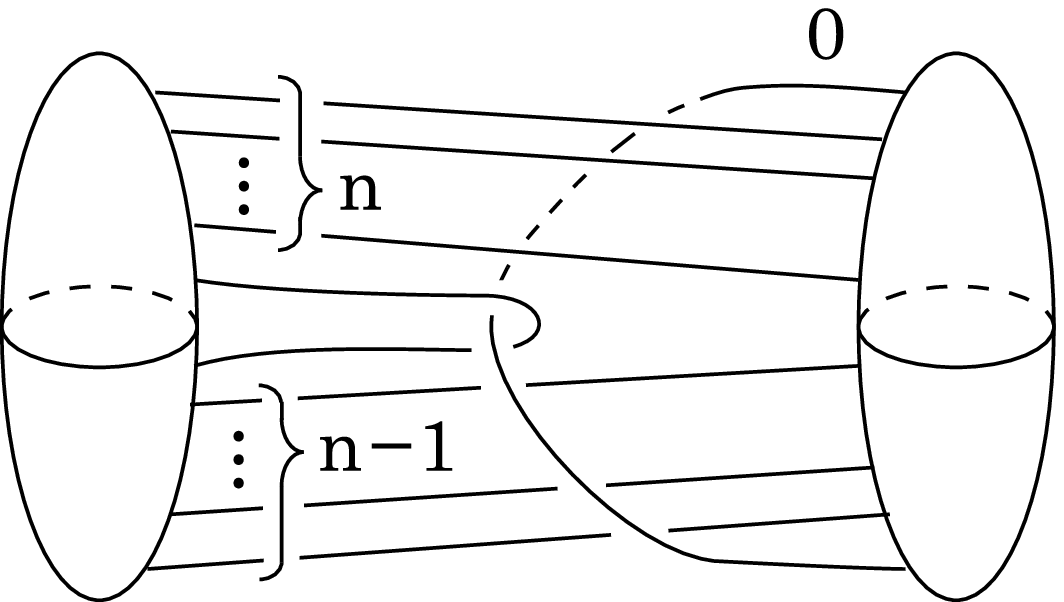}
\caption{$W_n$}
\label{fig10}
\end{center}
\end{figure}
\end{proof}

\begin{proposition}\label{prop:X_i}
$(1)$ The $4$-manifold $Y_0$ contains mutually disjoint copies of $W_{p_1}, W_{p_2}, \dots, W_{p_n}$ such that, for each $i$, the $4$-manifold $Y_i$ is obtained from $Y_0$ by removing the copy of $W_{p_i}$ and regluing it via the involution $f_{p_i}$.\medskip\\
$(2)$ The $4$-manifold $Y_0$ contains a fixed copy of $W(p_1-1,p_2-1,\dots,p_n-1)$ such that, for each $i$, the $4$-manifold $Y_i$ is obtained from $Y_0$ by removing the copy of $W(p_1-1,p_2-1,\dots,p_n-1)$ and regluing it via the involution $f^i(p_1-1,p_2-1,\dots,p_n-1)$.
\end{proposition}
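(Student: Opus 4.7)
The plan is to derive both statements directly from Proposition~\ref{prop:disjoint C_p} together with Theorem~\ref{th:cork and rbd}. For part~(1), I would first identify in Figure~\ref{fig8} the $n$ mutually disjoint copies $D_{p_1}, \dots, D_{p_n}$ of the building block from Figure~\ref{fig7}, each consisting of the corresponding $C_{p_i}$ together with the two extra $2$-handles that appear in the same disjoint portion of the diagram. Applying Theorem~\ref{th:cork and rbd} with $Z = Y_0$ to the $i$-th such $D_{p_i}$, one obtains a copy $W_{p_i - 1} \subset D_{p_i} \subset Y_0$ such that removing it and regluing via $f_{p_i - 1}$ produces $(Y_0)_{(p_i)} \# (p_i - 1)\overline{\mathbf{CP}^2} = Y_i' \# (p_i - 1)\overline{\mathbf{CP}^2} = Y_i$. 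Disjointness of the $D_{p_i}$'s in Figure~\ref{fig8} immediately yields disjointness of the resulting $W_{p_i-1}$'s, proving~(1).

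For part~(2), I would fix the disjoint embeddings $W_{p_i-1} \hookrightarrow Y_0$ from~(1), choose a system of mutually disjoint arcs in $Y_0$ running from $\partial W_{p_i-1}$ to $\partial W_{p_{i+1}-1}$ and avoiding the interiors of the $W_{p_j-1}$'s, and take $D^3$-bundle neighborhoods of these arcs. The union of the $W_{p_i-1}$'s with these $1$-handles is, tautologically, an embedded copy of the boundary sum $W(p_1-1, \dots, p_n-1)$ as drawn in Figure~\ref{fig9}. By the definition of $f^i(p_1-1, \dots, p_n-1)$, this involution acts as $f_{p_i-1}$ on the $W_{p_i-1}$ summand and is the identity on the other summands, in particular on the attaching regions of the connecting $1$-handles. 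Consequently, removing the entire boundary-sum copy from $Y_0$ and regluing via $f^i(p_1-1, \dots, p_n-1)$ produces the same $4$-manifold as removing only the single $W_{p_i-1}$ summand and regluing via $f_{p_i-1}$, which by~(1) equals $Y_i$.

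The only genuinely nontrivial step is the first: locating, inside the rather involved diagram of $Y_0$ in Figure~\ref{fig8}, the $n$ separate copies of $D_{p_i}$ and checking that the framings of the two extra $2$-handles surrounding each $C_{p_i}$ really do match those in Figure~\ref{fig7}. This should be read off directly from Proposition~\ref{prop:disjoint C_p}, which presents $Y_0$ in a form already adapted to the $n$ simultaneous rational blowdowns. Once that is in place, Theorem~\ref{th:cork and rbd} is applied $n$ times to yield~(1), and~(2) is then a formal boundary-sum argument.
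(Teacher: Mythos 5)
Your proposal is correct and follows essentially the same route as the paper, whose entire proof is the observation that Proposition~\ref{prop:disjoint C_p} and Theorem~\ref{th:cork and rbd} yield both claims; you have simply filled in the details (locating the disjoint $D_{p_i}$'s in Figure~\ref{fig8}, applying Theorem~\ref{th:cork and rbd} to each, and forming the boundary sum for part~(2)) that the authors leave implicit. Note that the copies you produce are of $W_{p_i-1}$ reglued via $f_{p_i-1}$, which is what Theorem~\ref{th:cork and rbd} gives and is consistent with part~(2); the subscript $p_i$ in the statement of part~(1) appears to be a typo, so this is not a defect of your argument.
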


\begin{proof}
Proposition~\ref{prop:disjoint C_p} and Theorem~\ref{th:cork and rbd} clearly show the claims $(1)$ and (2). 
\end{proof}

%%%%%%%%%%%%%%%%%%%%%%%%%%%%%%%%%%%%%%%%%%%%%%%%%%%%%%%%%%%%%%%%%%%%
\section{Proof of Theorem~\ref{th:disjoint corks} and~\ref{th:involutions of corks}}
\subsection{Seiberg-Witten invariants}
In this subsection, we briefly review facts about the Seiberg-Witten invariants. 
For details and examples of computations, see, for example, Fintushel-Stern~\cite{FS3}.\medskip 

Suppose that $Z$ is a simply connected closed smooth $4$-manifold with $b_2^+(Z)>1$ and odd. Let $\mathcal{C}(Z)$ be the set of 
characteristic elements of $H^2(Z;\mathbf{Z})$. %Fix a homology orientation on $X$, that is, orient $H^2_+(X;\mathbf{R})$. 
Then the Seiberg-Witten invariant $SW_{Z}: \mathcal{C}(Z)\to \mathbf{Z}$ is defined. 
Let $e(Z)$ and $\sigma(Z)$ be the Euler characteristic and the signature of $Z$, respectively, and $d_Z(K)$ the even integer defined by $d_Z(K)=\frac{1}{4}(K^2-2e(Z)-3\sigma(Z))$ 
for $K\in \mathcal{C}(Z)$. 
%It is known that if $SW_{X}(K)\neq 0$, then $d_X(K)\ge 0$. 
If $SW_Z(K)\neq 0$, then $K$ is called a Seiberg-Witten basic class of $Z$. We denote $\beta(Z)$ as the set of the Seiberg-Witten basic classes of $Z$. The following theorem is well-known.

\begin{theorem}[Witten~\cite{W}, cf.\ Gompf-Stipsicz~\cite{GS}]\label{th:SW of E(n)}For $n\geq 2$, \smallskip \\
%\begin{theorem}\label{th:SW of E(n)}For $n\geq 2$, \smallskip \\
$(1)$ $\beta(E(n))=\{k\cdot PD(T)\mid k\equiv 0\; (\textnormal{mod}\; 2), \lvert k\rvert \leq n-2 \}$$;$\smallskip \\
$(2)$ $\beta(E(n)\# m\overline{\mathbf{C}\mathbf{P}^2})=\{k\cdot PD(T)\pm E_1\pm E_2\pm \dots \pm E_m\mid k\equiv 0\; (\textnormal{mod}\; 2), \lvert k\rvert \leq n-2 \}$. Here $T$ denotes the class of a regular fiber of $E(n)$ in $H_2(E(n);\mathbf{Z})$, and $E_1,E_2,\dots,E_m$ denotes the standard basis of $H^2(m\overline{\mathbf{C}\mathbf{P}^2};\mathbf{Z})$. 
\end{theorem}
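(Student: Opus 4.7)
The plan is to derive both parts from two classical inputs: Witten's formula for the Seiberg--Witten invariants of K\"ahler surfaces (applied to the elliptic surface $E(n)$) for part $(1)$, and the Fintushel--Stern blowup formula for part $(2)$. Once these are in hand, the theorem is essentially immediate.

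For part $(1)$, I would package the invariants into the Laurent polynomial
\[
\mathcal{SW}_{E(n)} \;=\; \sum_{K\in\mathcal{C}(E(n))} SW_{E(n)}(K)\, e^{K} \;\in\; \mathbf{Z}\bigl[H^{2}(E(n);\mathbf{Z})\bigr],
\]
and prove the identity
\[
\mathcal{SW}_{E(n)} \;=\; \bigl(t - t^{-1}\bigr)^{n-2}, \qquad t := e^{PD(T)}.
\]
Two natural routes are available. The direct route applies Witten's formula for minimal K\"ahler surfaces with $b_{2}^{+}>1$: since $E(n)$ is relatively minimal (the $K3$ surface for $n=2$, properly elliptic for $n\ge 3$) with canonical class $K_{E(n)}=(n-2)PD(T)$, the basic classes come from the effective divisors in the canonical system and produce exactly the coefficients of the binomial expansion of $(t-t^{-1})^{n-2}$. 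An inductive alternative starts from $\mathcal{SW}_{E(2)}=1$ (the unique basic class of $K3$ is $0$, since $K_{K3}=0$) and applies the Morgan--Mrowka--Szab\'o torus fiber-sum formula to the decomposition $E(n+1)=E(n)\#_{F}E(1)$, which multiplies $\mathcal{SW}$ by a factor of $(t-t^{-1})$ at each step. Either way, reading off the coefficients of $(t-t^{-1})^{n-2}$ yields the basic classes $k\cdot PD(T)$ with $|k|\le n-2$ and the stated parity condition (forced by the characteristicness requirement on basic classes).

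For part $(2)$, I would iteratively apply the Fintushel--Stern blowup formula: for any simply connected $Z$ with $b_{2}^{+}(Z)>1$, and $\widetilde Z=Z\#\overline{\mathbf{CP}^{2}}$ with exceptional cohomology class $E$,
\[
\beta(\widetilde Z) \;=\; \bigl\{\,K\pm E \,\bigm|\, K\in\beta(Z)\,\bigr\},
\]
with Seiberg--Witten invariants transferring under the obvious correspondence. Iterating $m$ times with $Z=E(n)$ and exceptional classes $E_{1},\dots,E_{m}$ converts the answer of part $(1)$ directly into the description of $\beta(E(n)\# m\overline{\mathbf{CP}^{2}})$ claimed in part $(2)$.

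The main obstacle lies entirely in part $(1)$: Witten's K\"ahler-surface calculation (or equivalently the Morgan--Mrowka--Szab\'o formula together with the seed computation for $K3$) is a substantial piece of gauge theory that must be invoked rather than reproven here. The Fintushel--Stern blowup formula is essentially formal by comparison, arising from an explicit reducible solution on $\overline{\mathbf{CP}^{2}}$ and a routine excess-intersection analysis. In practice the whole theorem amounts to combining Witten's elliptic-surface computation with the Fintushel--Stern blowup formula.
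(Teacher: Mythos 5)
The paper offers no proof of this statement: it is quoted as a well-known result of Witten, with Gompf--Stipsicz as the reference, so there is no in-paper argument to compare yours against. Your outline is precisely the standard derivation contained in those references --- Witten's K\"ahler-surface formula (or the $K3$ seed $\mathcal{SW}_{E(2)}=1$ combined with the fiber-sum gluing formula applied to $E(n+1)=E(n)\#_F E(1)$) to get $\mathcal{SW}_{E(n)}=(t-t^{-1})^{n-2}$, followed by the Fintushel--Stern blowup formula for part $(2)$ --- and it is correct in substance. One caveat: expanding $(t-t^{-1})^{n-2}$ produces basic classes $k\cdot PD(T)$ with $k\equiv n\pmod 2$, which is also exactly what characteristicness forces (a section class $s$ of $E(n)$ satisfies $T\cdot s=1$ and $s\cdot s=-n$, so $k\,PD(T)$ characteristic requires $k\equiv n\pmod 2$); this agrees with the printed condition $k\equiv 0\pmod 2$ only when $n$ is even (e.g.\ for $E(3)$ the basic classes are $\pm PD(T)$, and $0$ is not even a characteristic class there). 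So your assertion that the expansion yields ``the stated parity condition'' is not literally correct --- the imprecision lies in the theorem's statement as printed, which your computation in effect corrects, but you should state the parity as $k\equiv n\pmod 2$ rather than claiming agreement.
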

%\smallskip  
%

We here recall the change of the Seiberg-Witten invariants by rationally blowing down. Assume further that $Z$ contains a copy of $C_p$. Let $Z_{(p)}$ be the rational blowdown of $Z$ along the copy of $C_p$. Suppose that $Z_{(p)}$ is simply connected. The following theorems are obtained by Fintushel-Stern~\cite{FS1}.

\begin{theorem}[Fintushel-Stern \cite{FS1}]\label{th:SW1}
For every element $K$ of $\mathcal{C}(Z_{(p)})$, there exists an element $\tilde{K}$ of $\mathcal{C}(Z)$ such that 
$K\rvert _{Z_{(p)}-\text{\normalfont{int}}\,B_{p}}=\tilde{K}\rvert _{Z-\text{\normalfont{int}}\,C_{p}}$ and 
$d_{Z_{(p)}}(K)=d_Z(\tilde{K})$. Such an element $\tilde{K}$ of $\mathcal{C}(Z)$ is called a lift of $K$.
\end{theorem}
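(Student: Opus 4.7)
The plan is to exploit the fact that $Z$ and $Z_{(p)}$ share the common piece $N := Z\setminus \mathrm{int}\,C_p = Z_{(p)}\setminus \mathrm{int}\,B_p$, differing only by the filling of the boundary $\partial N\cong L(p^2,p-1)$. Given $K\in\mathcal{C}(Z_{(p)})$, I would construct the lift $\tilde K\in\mathcal{C}(Z)$ by setting $\tilde K|_N := K|_N$ and extending suitably over $C_p$; the identity $d_{Z_{(p)}}(K)=d_Z(\tilde K)$ then reduces to a self-intersection computation on $C_p$.

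First, I would check that an integral extension always exists. From the long exact sequence of $(C_p,\partial C_p)$ together with $H_1(C_p)=0$, the restriction $H^2(C_p;\mathbf{Z})\to H^2(\partial C_p;\mathbf{Z})\cong\mathbf{Z}/p^2$ is surjective, so $K|_{\partial N}$ lifts to $H^2(C_p;\mathbf{Z})$, and Mayer--Vietoris for $Z=N\cup C_p$ then produces an integral class $\tilde K\in H^2(Z;\mathbf{Z})$ restricting to $K|_N$. The set of such extensions is a torsor over the image of $H^2(C_p,\partial C_p;\mathbf{Z})\to H^2(C_p;\mathbf{Z})$, and among these I would select one whose mod-$2$ reduction agrees with $w_2(Z)$; this amounts to solving a linear system over $\mathbf{Z}/2$ read off from the plumbing basis $u_1,\dots,u_{p-1}$ of $H_2(C_p)$.

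Next, I would verify the dimension formula. Additivity of Euler characteristic and Novikov additivity of signature, applied with $e(C_p)=p$, $e(B_p)=1$, $\sigma(C_p)=-(p-1)$, $\sigma(B_p)=0$, give
\[
e(Z_{(p)})=e(Z)-(p-1),\qquad \sigma(Z_{(p)})=\sigma(Z)+(p-1),
\]
so a direct substitution converts $d_{Z_{(p)}}(K)=d_Z(\tilde K)$ into the self-intersection identity $\tilde K^{2} = K^2 - (p-1)$. Splitting the two squares along $Z=N\cup C_p$ and $Z_{(p)}=N\cup B_p$, and using that $B_p$ is a rational ball (so $(K|_{B_p})^2=0$), this reduces to the single claim $(\tilde K|_{C_p})^2=-(p-1)$.

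The main obstacle is this last claim. The task is to select, inside the torsor of integral extensions, a characteristic lift whose $C_p$-restriction has square exactly $-(p-1)$ and whose image in $H^2(\partial C_p;\mathbf{Z})$ matches $K|_{\partial B_p}$. Equivalently, one must show that among characteristic vectors of the negative-definite plumbing lattice of $C_p$ (the $(p-1)\times(p-1)$ matrix with diagonal $-(p+2),-2,\dots,-2$ and $1$'s on the off-diagonal), the minimizer of $v\cdot v$ in the correct coset attains $v\cdot v=-(p-1)$. This is an explicit lattice computation on the plumbing matrix and, combined with checking compatibility of the boundary restrictions through the gluing $\partial C_p=\partial B_p$, is the technical heart of the argument.
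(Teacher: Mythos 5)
The paper does not prove this statement at all: it is quoted verbatim from Fintushel--Stern \cite{FS1} and used as a black box, so there is no in-paper argument to compare against. Judged on its own terms, your reduction steps are all correct and are exactly the standard skeleton of the Fintushel--Stern argument: the identification of the common piece $N$, the surjectivity of $H^2(C_p;\mathbf{Z})\to H^2(\partial C_p;\mathbf{Z})$ from $H^3(C_p,\partial C_p;\mathbf{Z})\cong H_1(C_p;\mathbf{Z})=0$, the bookkeeping $e(Z_{(p)})=e(Z)-(p-1)$, $\sigma(Z_{(p)})=\sigma(Z)+(p-1)$, the vanishing of the $B_p$-contribution to $K^2$ because $H^2(B_p;\mathbf{Z})$ is torsion, and the resulting equivalence of $d_{Z_{(p)}}(K)=d_Z(\tilde K)$ with $(\tilde K|_{C_p})^2=1-p$ (consistent with Theorem~\ref{th:SW3}).

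The gap is that this last step is not a routine verification you can defer: it \emph{is} the theorem. Everything before it is formal; the actual content of the statement is that for every spin$^c$ structure on $L(p^2,p-1)$ that extends over the rational ball $B_p$ (equivalently, every class in $p\mathbf{Z}/p^2\mathbf{Z}\subset H^2(\partial C_p;\mathbf{Z})$ of the appropriate parity), there exists a characteristic covector of the linear plumbing lattice $\langle -(p+2),-2,\dots,-2\rangle$ restricting to it on the boundary with square exactly $1-p$. Your proposal asserts this as "an explicit lattice computation" without exhibiting the covector or proving that the extremal square in the relevant coset is $1-p$ rather than something more negative (note also that since the lattice is negative definite you want the \emph{maximizer} of $v\cdot v$, not the minimizer). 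Likewise, the compatibility between the two boundary identifications $\partial C_p\cong L(p^2,p-1)\cong\partial B_p$, which governs which cosets actually arise from classes $K$ on $Z_{(p)}$, is named but not carried out. So what you have is a correct and well-organized reduction of the theorem to its essential lattice-theoretic core, not a proof of that core; to complete it you would need to write down the candidate covectors explicitly (as Fintushel--Stern and Park \cite{P1} do) and verify both the square and the boundary restriction.
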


\begin{theorem}[Fintushel-Stern \cite{FS1}]\label{th:SW2}
If an element $\tilde{K}$ of $\mathcal{C}(Z)$ is a lift of some element $K$ of $\mathcal{C}(Z_{(p)})$, then $SW_{Z_{(p)}}(K)=SW_{Z}(\tilde{K})$. 
\end{theorem}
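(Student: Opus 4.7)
The plan is to deduce the statement from a neck-stretching and gluing argument for Seiberg-Witten invariants along the common boundary $Y := L(p^2,p-1) = \partial C_p = \partial B_p$. First I would stretch the metrics on both $Z$ and $Z_{(p)}$ along long collars of $Y$, so that in the limit each manifold decomposes into two cylindrical-end pieces: $Z = (Z \setminus \mathrm{int}\,C_p) \cup_Y C_p$ and $Z_{(p)} = (Z \setminus \mathrm{int}\,C_p) \cup_Y B_p$, which share the common piece $Z \setminus \mathrm{int}\,C_p$. Since $b_1(Y)=0$, the Seiberg-Witten moduli space of $Y$ for any $\mathrm{spin}^c$ structure consists of finitely many isolated reducible solutions modulo gauge, so the standard gluing theorem expresses each of $SW_Z(\tilde K)$ and $SW_{Z_{(p)}}(K)$ as a finite pairing of relative Seiberg-Witten invariants on the two sides, indexed by the reducibles on $Y$.

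Next I would exploit the lift hypothesis. By definition, $\tilde K|_{Z \setminus \mathrm{int}\,C_p} = K|_{Z_{(p)} \setminus \mathrm{int}\,B_p}$, so the restrictions of $K$ and $\tilde K$ to $Y$ determine the same $\mathrm{spin}^c$ structure there; in particular, the relative invariants on the common piece enter both gluing formulas identically, and the numerical condition $d_Z(\tilde K) = d_{Z_{(p)}}(K)$ guarantees that the virtual dimensions of the two relative moduli spaces coincide.

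The heart of the proof is then to compare the relative contributions of $C_p$ and $B_p$ for this matched $\mathrm{spin}^c$ structure on $Y$. Both manifolds satisfy $b_2^+=0$, so their relative moduli spaces consist only of reducibles, each contributing $\pm 1$. Using the explicit handle structures of Figure~\ref{fig6} (the linear chain of disk bundles $C_p$ and the rational homology ball $B_p$ with $\pi_1(B_p) \cong \mathbf{Z}/p$), a direct computation of the reducible solutions of the Seiberg-Witten equations on each with cylindrical ends shows that the two relative invariants agree, yielding $SW_{Z_{(p)}}(K) = SW_Z(\tilde K)$.

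The main obstacle is the orientation and $\mathrm{spin}^c$-bookkeeping. A given $K$ on $Z_{(p)}$ admits a finite family of lifts $\tilde K$ on $Z$, and one has to verify that the gluing formula on $Z$-side pairs each lift with precisely the reducible on $Y$ that the $B_p$-side computation on $Z_{(p)}$ sees. The dimension condition $d_Z(\tilde K)=d_{Z_{(p)}}(K)$ built into the definition of a lift selects the correct reducible, while a Mayer-Vietoris analysis of how characteristic elements restrict to $Y$ under the decomposition $Z = (Z \setminus \mathrm{int}\,C_p) \cup C_p$ matches the orientations, preventing any sign cancellation.
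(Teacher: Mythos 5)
First, note that the paper offers no proof of this statement: it is quoted from Fintushel--Stern \cite{FS1} as a known black box, so the only fair comparison is with the argument in that reference. Your outline does reproduce the broad strategy of \cite{FS1} (stretch the neck along $\partial C_p=\partial B_p=L(p^2,p-1)$, observe that $Z$ and $Z_{(p)}$ share the piece $Z-\mathrm{int}\,C_p$, and reduce both invariants to a relative count on that common piece), but two of your load-bearing steps are unjustified, and one of them is justified by the wrong reason. The assertion that the Seiberg--Witten moduli space of $Y=L(p^2,p-1)$ consists only of finitely many isolated reducibles does not follow from $b_1(Y)=0$; that hypothesis only controls the reducibles. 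What rules out irreducible solutions on $Y$ (and on the cylinder $\mathbf{R}\times Y$, which is what the compactness and gluing theory actually require) is that $Y$ is a lens space and hence carries a metric of positive scalar curvature. This is the essential geometric input of the whole theorem, and without it the neck-stretching limit need not decompose as you claim.

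Second, the step you yourself call the heart of the proof --- that the relative contributions of $C_p$ and $B_p$ agree --- is asserted rather than proved, and the phrasing hides where the difficulty lies. The actual argument is not a computation of two numbers that happen to coincide: one shows that on each of $B_p$ and $C_p$ the relevant finite-energy solution is the unique reducible determined by the restricted spin$^c$ structure, and that gluing with this reducible is an unobstructed bijection onto the global moduli space, so that \emph{both} $SW_Z(\tilde K)$ and $SW_{Z_{(p)}}(K)$ equal one and the same relative invariant of $Z-\mathrm{int}\,C_p$. On the $B_p$ side this is comparatively easy ($B_p$ is a rational homology ball), but on the negative-definite piece $C_p$ the reducible attached to an arbitrary extension $\tilde K|_{C_p}$ can have the wrong index and a nontrivial obstruction; the condition $(\tilde K\rvert_{C_p})^2=1-p$, which is exactly what the dimension equality $d_Z(\tilde K)=d_{Z_{(p)}}(K)$ in the definition of a lift (Theorem~\ref{th:SW1}, cf.\ Theorem~\ref{th:SW3}) encodes, is what makes this gluing unobstructed. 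You invoke the dimension condition only to ``select the correct reducible,'' not to carry out this index analysis, so the argument as written does not close. Finally, your ``finite pairing indexed by the reducibles on $Y$'' is a red herring: the spin$^c$ structure on $Y$ is already pinned down by $\tilde K$ (equivalently by $K$), and each spin$^c$ structure on a lens space supports a single flat reducible up to gauge, so no sum arises.
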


\begin{theorem}[Fintushel-Stern \cite{FS1}, cf.~Park \cite{P1}]\label{th:SW3}
If an element $\tilde{K}$ of $\mathcal{C}(Z)$ satisfies that $(\tilde{K}\rvert _{C_p})^2=1-p$ and 
$\tilde{K}\rvert _{\partial C_p}=mp\in \mathbf{Z}_{p^2}\cong H^2(\partial C_p;\mathbf{Z})$ 
with $m\equiv p-1\pmod 2$, then there exists an element $K$ of $\mathcal{C}(Z_{(p)})$ such that $\tilde{K}$ is a lift of $K$. 
\end{theorem}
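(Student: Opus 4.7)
\medskip
\noindent\textbf{Proof proposal.}
The plan is to construct $K\in\mathcal{C}(Z_{(p)})$ by gluing cohomology classes across the common boundary $\partial C_p = \partial B_p = L(p^2,p-1)$. Writing $Z = (Z-\text{int}\,C_p)\cup C_p$ and $Z_{(p)} = (Z-\text{int}\,C_p)\cup B_p$, the condition that $\tilde K$ be a lift of $K$ amounts to (i) extending the common restriction $\tilde K|_{\partial C_p}$ across $B_p$ as an integral class $\bar L\in H^2(B_p;\mathbf{Z})$, (ii) gluing to get a characteristic class $K$ on $Z_{(p)}$, and (iii) checking the dimension identity $d_{Z_{(p)}}(K) = d_Z(\tilde K)$. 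Each of the three hypotheses in the statement is tuned to make one of these steps succeed.

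First I would establish the key topological fact about $B_p$: in the long exact cohomology sequence of $(B_p,\partial B_p)$, Poincar\'e--Lefschetz duality together with $H_2(B_p)=0$ and $H_1(B_p;\mathbf{Z})=\mathbf{Z}_p$ forces the restriction map $H^2(B_p;\mathbf{Z})\to H^2(\partial B_p;\mathbf{Z})\cong \mathbf{Z}_{p^2}$ to identify $H^2(B_p;\mathbf{Z})=\mathbf{Z}_p$ with the unique order-$p$ subgroup $p\,\mathbf{Z}_{p^2}$. Thus the hypothesis $\tilde K|_{\partial C_p}=mp$ is precisely the obstruction-vanishing condition that lifts $\tilde K|_{\partial C_p}$ to a class $\bar L\in H^2(B_p;\mathbf{Z})$. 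A Mayer--Vietoris argument for $Z_{(p)}=(Z-\text{int}\,C_p)\cup B_p$ then produces $K\in H^2(Z_{(p)};\mathbf{Z})$ with $K|_{Z_{(p)}-\text{int}\,B_p}=\tilde K|_{Z-\text{int}\,C_p}$ and $K|_{B_p}=\bar L$; any indeterminacy coming from the Mayer--Vietoris connecting map can be absorbed.

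Next I would verify that $K$ is characteristic on $Z_{(p)}$ by pairing against Mayer--Vietoris generators of $H_2(Z_{(p)};\mathbf{Z})$. For classes supported in $Z-\text{int}\,C_p$ the congruence $K\cdot x\equiv x\cdot x\pmod 2$ is inherited from the corresponding one for $\tilde K$ on $Z$; the genuinely new condition concerns classes that pair nontrivially with $B_p$, which reduces mod $2$ to a calculation with the linking form of $L(p^2,p-1)$. The parity hypothesis $m\equiv p-1\pmod 2$ enters exactly here, as the congruence needed so that the mod-$2$ extension of $\tilde K|_{\partial C_p}$ over $B_p$ is characteristic.

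Finally, using the standard values $e(C_p)=p$, $\sigma(C_p)=-(p-1)$, $e(B_p)=1$, $\sigma(B_p)=0$, a short computation shows that $d_{Z_{(p)}}(K)=d_Z(\tilde K)$ is equivalent to $K^2-\tilde K^2=p-1$. Splitting both self-intersections along the Mayer--Vietoris decomposition and using that $H_2(B_p;\mathbf{Q})=0$ (so the $B_p$-contribution vanishes after a rational extension across the rational ball), this reduces to $-(\tilde K|_{C_p})^2=p-1$, i.e.\ exactly the hypothesis $(\tilde K|_{C_p})^2=1-p$. The main obstacle I anticipate is the mod-$2$ and rational bookkeeping tying the parity of $m$ to characteristicness via the linking form on $L(p^2,p-1)$, and making the rational extension of $K$ across $B_p$ rigorous enough to justify the splitting of $K^2$; this is where the delicate work lies.
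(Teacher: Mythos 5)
The paper does not prove this statement at all: it is quoted from Fintushel--Stern \cite{FS1} and Park \cite{P1}, so there is no internal proof to compare against. Measured against the argument in those references, your outline is essentially the standard one and correctly matches each hypothesis to the step it enables: the divisibility $\tilde K|_{\partial C_p}=mp$ is exactly the condition for extending over $B_p$ (your identification of the image of $H^2(B_p;\mathbf{Z})\cong\mathbf{Z}_p$ with $p\,\mathbf{Z}_{p^2}$ via the exact sequence of the pair and Lefschetz duality is right, and in fact since $H^1(L(p^2,p-1);\mathbf{Z})=0$ the glued class $K$ is unique, so there is no Mayer--Vietoris indeterminacy to absorb); the Euler characteristic and signature bookkeeping $e(C_p)=p$, $\sigma(C_p)=-(p-1)$, $e(B_p)=1$, $\sigma(B_p)=0$ correctly reduces $d_{Z_{(p)}}(K)=d_Z(\tilde K)$ to $K^2-\tilde K^2=p-1$, which the rational splitting over the lens space (where $H^2(\partial C_p;\mathbf{Q})=0$) converts into the hypothesis $(\tilde K|_{C_p})^2=1-p$. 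The one place your plan is genuinely thin is the step you yourself flag: verifying that $K$ is characteristic and that the parity condition $m\equiv p-1\pmod 2$ is exactly what is needed. Doing this by hand with the linking form of $L(p^2,p-1)$ is workable but awkward; the cleaner route, and the one effectively taken in \cite{P1}, is to work with ${\rm spin}^c$ structures rather than characteristic classes: $\tilde K=c_1(\mathfrak{s})$ for a ${\rm spin}^c$ structure $\mathfrak{s}$ on $Z$, and one checks that $\mathfrak{s}|_{\partial C_p}$ extends to a ${\rm spin}^c$ structure on $B_p$ precisely when the divisibility and parity conditions hold; gluing ${\rm spin}^c$ structures then yields $K$ characteristic for free. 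If you fill in that step (in either formulation), your argument is complete and is the same proof as in the cited sources.
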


\begin{corollary}\label{cor:SW4}
If an element $\tilde{K}$ of $\mathcal{C}(Z)$ satisfies $\tilde{K}(u_1)=\tilde{K}(u_2)=\dots=\tilde{K}(u_{p-2})=0$ and $\tilde{K}(u_{p-1})=\pm p$, then $\tilde{K}$ is a lift of some element $K$ of $\mathcal{C}(Z_{(p)})$. 
\end{corollary}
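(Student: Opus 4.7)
The plan is to deduce the hypotheses of Theorem~\ref{th:SW3} from the algebraic data in the statement; once $(\tilde{K}\rvert_{C_p})^2=1-p$ and $\tilde{K}\rvert_{\partial C_p}=mp\in\mathbf{Z}_{p^2}$ with $m\equiv p-1\pmod{2}$ are established, the lift $K$ is produced by that theorem. First I would make $\tilde{K}\rvert_{C_p}\in H^2(C_p;\mathbf{Z})$ explicit: letting $u_1^{*},\dots,u_{p-1}^{*}$ denote the basis of $H^2(C_p;\mathbf{Z})\cong \mathrm{Hom}(H_2(C_p;\mathbf{Z}),\mathbf{Z})$ dual to $u_1,\dots,u_{p-1}$, the hypothesis translates to $\tilde{K}\rvert_{C_p}=\pm p\cdot u_{p-1}^{*}$.

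For the self-intersection, I would pass to the rational pairing on $H^2(C_p;\mathbf{Q})$ induced by $Q^{-1}$, where $Q$ is the intersection matrix of the linear plumbing $(-2,-2,\dots,-2,-(p+2))$ describing $C_p$. Then $(u_{p-1}^{*})^2=(Q^{-1})_{p-1,p-1}=\det(\overline{Q})/\det(Q)$, where $\overline{Q}$ is the $A_{p-2}$-type submatrix obtained by deleting the last row and column. Standard determinant formulas give $\det(\overline{Q})=(-1)^{p-2}(p-1)$ and $\det(Q)=(-1)^{p-1}p^2$ (the latter consistent with $|H_1(\partial C_p)|=p^2$), so $(\tilde{K}\rvert_{C_p})^2=p^2\cdot(-(p-1)/p^2)=1-p$, as required.

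For the boundary restriction, I would use the long exact sequence of the pair to identify $H^2(\partial C_p)\cong \mathrm{coker}(j^{*}\colon H^2(C_p,\partial C_p)\to H^2(C_p))\cong \mathbf{Z}^{p-1}/Q\mathbf{Z}^{p-1}\cong \mathbf{Z}_{p^2}$. Since $\tilde{K}\rvert_{C_p}=\pm p\cdot u_{p-1}^{*}$ is already a $p$-fold multiple in $H^2(C_p;\mathbf{Z})$, its image in $\mathbf{Z}_{p^2}$ is automatically of the form $mp$.

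The main obstacle is verifying $m\equiv p-1\pmod{2}$, which does not follow from divisibility alone. The characteristic condition $\tilde{K}(u_i)\equiv u_i\cdot u_i\pmod{2}$ is automatic from the hypothesis (indeed, $0\equiv -2$ and $\pm p\equiv -(p+2)\pmod{2}$), so the verification reduces to tracing through the specific identification $H^2(\partial C_p)\cong \mathbf{Z}_{p^2}$ used in Theorem~\ref{th:SW3} and confirming that the chain $u_1,\dots,u_{p-1}$ produces a representative of $m$ with the correct parity. Once this bookkeeping is in place, Theorem~\ref{th:SW3} immediately yields the desired $K\in\mathcal{C}(Z_{(p)})$ of which $\tilde{K}$ is a lift.
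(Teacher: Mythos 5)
The paper states Corollary~\ref{cor:SW4} without proof, as an immediate consequence of Theorem~\ref{th:SW3}, so there is no argument of the authors' to compare against; your route --- verifying the two hypotheses of Theorem~\ref{th:SW3} directly from the intersection form $Q$ of $C_p$ --- is the natural and surely the intended one. Your computation of $(\tilde{K}\rvert_{C_p})^2=p^2(Q^{-1})_{p-1,p-1}=1-p$ is correct (and your placement of the $-(p+2)$-framed sphere at $u_{p-1}$ is forced by the characteristic condition, since $\tilde{K}(u_i)=0$ requires $u_i^2$ even), as is the observation that the image of $\pm p\,u_{p-1}^{*}$ in $H^2(\partial C_p)\cong\mathbf{Z}_{p^2}$ lies in $p\mathbf{Z}_{p^2}$.

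The gap is the one you flag yourself and then leave open: the parity condition $m\equiv p-1\pmod 2$. A proof ending with ``once this bookkeeping is in place'' has not established the statement, and this is the only point with real content --- for $p$ even the residue $mp\in\mathbf{Z}_{p^2}$ determines the parity of $m$, so the condition could in principle fail. The bookkeeping is short, so do it. In the presentation $H^2(\partial C_p)\cong\mathbf{Z}^{p-1}/Q\mathbf{Z}^{p-1}$ the first $p-2$ columns of $Q$ give the relations $[u_2^{*}]=2[u_1^{*}]$ and $[u_{i+1}^{*}]=2[u_i^{*}]-[u_{i-1}^{*}]$, whence $[u_i^{*}]=i\,[u_1^{*}]$ for all $i$; the last column gives $[u_{p-2}^{*}]=(p+2)[u_{p-1}^{*}]$, i.e.\ $p^2[u_1^{*}]=0$. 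Thus $[u_1^{*}]$ generates $\mathbf{Z}_{p^2}$ and $\tilde{K}\rvert_{\partial C_p}=\pm p(p-1)[u_1^{*}]$. Whatever generator is used for the identification in Theorem~\ref{th:SW3}, it differs from $[u_1^{*}]$ by a unit $u$ of $\mathbf{Z}_{p^2}$, so $m\equiv\pm(p-1)u\pmod p$. When $p$ is even, $u$ is prime to $p$ and hence odd, so $m$ is odd, i.e.\ $m\equiv p-1\pmod 2$; when $p$ is odd, replacing $m$ by $m+p$ changes its parity without changing $mp\in\mathbf{Z}_{p^2}$, so the parity condition is automatic once $p\mid\tilde{K}\rvert_{\partial C_p}$ is known. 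With this inserted, your argument is complete.
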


%%%%%%%%%%%%%%%%%%%%%%%%%%%%%%%%%%%%%%%%%%%%%%%%%%%%%%%%%%%%%%%%%%%
\subsection{Computation of SW invariants}In this subsection, we prove Theorem~\ref{th:disjoint corks} and~\ref{th:involutions of corks} by computing the Seiberg-Witten invariants of the $4$-manifolds $Y_i$ $(0\leq i\leq n)$ in Definition~\ref{def:disjoint cork}.\medskip 

For a smooth $4$-manifold $Z$ we denote $N(Z)$ as the number of elements of $\beta(Z)$. 

\begin{lemma}$N(Y_i)=2^{p_i-1}N(Y_0)$ $(1\leq i\leq n)$
\end{lemma}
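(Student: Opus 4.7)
The plan is to combine the blow-up formula with the rational blowdown lift correspondence (Theorems~\ref{th:SW1}--\ref{th:SW3} and Corollary~\ref{cor:SW4}) to reduce the count $N(Y_i)$ to a count of basic classes of $Y_0$, which is already known from Theorem~\ref{th:SW of E(n)}. Since $Y_i = Y_i' \# (p_i-1)\overline{\mathbf{C}\mathbf{P}^2}$, the standard blow-up formula for Seiberg-Witten basic classes immediately yields $N(Y_i) = 2^{p_i-1}\, N(Y_i')$, so the whole problem reduces to showing $N(Y_i') = N(Y_0)$.

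For that equality, I would first apply Theorem~\ref{th:SW of E(n)}(2) to $Y_0 = E(p_1+\dots+p_n) \# n\overline{\mathbf{C}\mathbf{P}^2}$ to describe $\beta(Y_0)$ explicitly as the set of classes $k\cdot PD(T) + \sum_{j=1}^n \varepsilon_j E_j$ with $k$ even, $|k|\le p_1+\dots+p_n-2$, and $\varepsilon_j=\pm 1$. The key technical step is then to show that, for the copy of $C_{p_i}$ embedded in $Y_0$ by Proposition~\ref{prop:disjoint C_p}, every such basic class $K$ satisfies
\[
K(u_1^{(i)}) = K(u_2^{(i)}) = \dots = K(u_{p_i-2}^{(i)}) = 0
\qquad\text{and}\qquad
K(u_{p_i-1}^{(i)}) = \pm p_i,
\]
where $u_1^{(i)}, \dots, u_{p_i-1}^{(i)}$ denote the distinguished generators of $H_2(C_{p_i};\mathbf{Z})$ inside $H_2(Y_0;\mathbf{Z})$. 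Concretely, I would read off from the labelled 2-handles of Figure~\ref{fig8} an expression for each $u_j^{(i)}$ in terms of the fiber $T$, a section of $E(p_1+\dots+p_n)$, and the exceptional classes $E_1,\dots,E_n$, and then verify the pairings directly, following the same pattern as the rational blowdown computations in~\cite{Y1} and~\cite{Y2}.

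Once the hypothesis of Corollary~\ref{cor:SW4} is checked, every $K \in \beta(Y_0)$ is a lift of some element of $\mathcal{C}(Y_i')$, which lies in $\beta(Y_i')$ by Theorem~\ref{th:SW2}; conversely, Theorem~\ref{th:SW1} guarantees that each class in $\beta(Y_i')$ arises as such a lift. To upgrade this correspondence to an equality of cardinalities, I would check injectivity of the lift map: two distinct basic classes $K, K' \in \beta(Y_0)$ differ either in the coefficient $k$ or in some sign $\varepsilon_j$, and since the classes $PD(T)$ and $E_1,\dots,E_n$ can all be represented by surfaces supported in $Y_0 \setminus \mathrm{int}\,C_{p_i}$ (as visible from Figure~\ref{fig8}), the restrictions $K|_{Y_0 \setminus \mathrm{int}\,C_{p_i}}$ and $K'|_{Y_0 \setminus \mathrm{int}\,C_{p_i}}$ remain distinct, and so the two classes descend to distinct lifts on $Y_i'$.

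The main obstacle is the pairing calculation: one has to track the generators $u_j^{(i)}$ carefully through the long sequence of handle slides in the proof of Proposition~\ref{prop:disjoint C_p} (Figures~\ref{fig17}--\ref{fig19}), write them as integral combinations of $T$, the section classes of $E(p_1+\dots+p_n)$, and the exceptional classes $E_1,\dots,E_n$, and then verify the numerical identities displayed above. Modulo this bookkeeping, the remainder of the argument is a formal combination of the blow-up formula, Theorem~\ref{th:SW of E(n)}(2), and the lift theorems of Fintushel-Stern.
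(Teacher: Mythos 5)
Your proposal follows essentially the same route as the paper's proof: reduce to $N(Y_i')=N(Y_0)$ via the blow-up formula, use Theorem~\ref{th:SW of E(n)}(2) and Corollary~\ref{cor:SW4} to show every basic class of $Y_0$ lifts an element of $\mathcal{C}(Y_i')$, and establish the bijection on basic classes through Theorems~\ref{th:SW1}--\ref{th:SW2} together with distinctness of restrictions to $Y_0-\mathrm{int}\,C_{p_i}$. The only cosmetic difference is that where you argue distinctness by representing $PD(T)$ and the $E_j$ in the complement (which is delicate for $E_i$ itself, since the spheres of $C_{p_i}$ carry $e_i$), the paper instead invokes the cohomology exact sequence for the pair $(Y_i',\,Y_i'-\mathrm{int}\,B_{p_i})$ to see that classes on $Y_i'$ are determined by their restrictions; both hinge on the same deferred bookkeeping with the classes $u_j^{(i)}$ from Figure~\ref{fig8}.
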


\begin{proof}Proposition~\ref{prop:disjoint C_p}, Theorem~\ref{th:SW of E(n)} and Corollary~\ref{cor:SW4} show that every Seiberg-Witten basic class of $Y_0$ is a lift of some element of $\mathcal{C}(Y_i')$, and that these basic classes of $Y_0$ have mutually different restrictions to $Y_0-\text{\normalfont{int}}\,C_{p_i} (=Y_i'-\text{\normalfont{int}}B_{p_i})$. Note that every element of $H^2(Y_i';\mathbf{Z})$ is uniquely determined by its restriction to ${Y_i'-\text{\normalfont{int}}B_{p_i}}$. (We can easily check this from the cohomology exact sequence for the pair $(Y_i', Y_i'-\text{\normalfont{int}}B_{p_i})$.) Hence Theorems~\ref{th:SW1} and~\ref{th:SW2} give $N(Y_i')=N(Y_0)$. Now the required claim follows from the blow-up formula. 
\end{proof}

\begin{corollary}
If $p_1,p_2,\dots,p_n\geq 2$ are mutually different, then $Y_i$ $(0\leq i\leq n)$ are mutually homeomorphic but not diffeomorphic.  
\end{corollary}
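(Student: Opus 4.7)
The plan is to decouple the topological and smooth sides. Topologically I want to put every $Y_i$ into the hypothesis of Freedman's classification, by checking they are simply connected, share the same Euler characteristic and signature, and all have odd intersection form. Smoothly I will distinguish them by the Seiberg--Witten basic class count $N$, exploiting the formula $N(Y_i)=2^{p_i-1}N(Y_0)$ just proved.

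For the numerical invariants, rational blowdown along $C_{p_i}$ replaces the $p_i-1$ negative-definite $2$-handles of $C_{p_i}$ by the rational ball $B_{p_i}$ with $b_2=0$, so $b_2^+(Y_i')=b_2^+(Y_0)$ and $b_2^-(Y_i')=b_2^-(Y_0)-(p_i-1)$. Blowing up $p_i-1$ copies of $\overline{\mathbf{CP}^2}$ to form $Y_i$ from $Y_i'$ restores the $b_2^-$ deficit, so $e(Y_i)=e(Y_0)$ and $\sigma(Y_i)=\sigma(Y_0)$. Every $Y_i$ is non-spin because it retains at least one $\overline{\mathbf{CP}^2}$ summand (the $n$ copies in $Y_0$ and the $p_i-1\geq 1$ copies in $Y_i$). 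The delicate step is simple connectedness of each $Y_i'$: one must verify that the generator of $\pi_1(\partial B_{p_i})\cong \mathbf{Z}_{p_i}$ dies in $Y_0-\mathrm{int}\,C_{p_i}$. From Figure~\ref{fig8} this follows because the cusp fiber neighborhood adjacent to $C_{p_i}$, together with the $-1$-framed $2$-handle capping off the chain, provides a vanishing disk for that generator, just as in the standard rational blowdown constructions of \cite{FS1}. With these ingredients in hand, Freedman's theorem furnishes mutual homeomorphisms among the $Y_i$.

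For the smooth distinction, note that $N(Y_0)=N(E(p_1+\cdots+p_n))\cdot 2^n$ by the blow-up formula and Theorem~\ref{th:SW of E(n)}, and this is strictly positive. Combined with the lemma $N(Y_i)=2^{p_i-1}N(Y_0)$ for $1\leq i\leq n$, and the hypothesis that the integers $p_i\geq 2$ are mutually distinct, the factors $2^{p_i-1}\geq 2$ are mutually distinct and all exceed $1$, so the values $N(Y_0), N(Y_1),\ldots, N(Y_n)$ are pairwise distinct. Since $N$ is a diffeomorphism invariant, the $Y_i$ are pairwise non-diffeomorphic. The main obstacle throughout is the simple connectedness verification for $Y_i'$; once that is granted, the rest reduces to signature/Euler characteristic bookkeeping and an immediate counting argument.
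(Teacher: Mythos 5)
Your argument is correct and follows exactly the route the paper intends: the corollary is stated without proof as an immediate consequence of the preceding lemma, with the homeomorphism coming from Freedman's theorem (same $e$, $\sigma$, odd indefinite form, simple connectedness preserved under rational blowdown) and the smooth distinction from the pairwise distinct basic-class counts $N(Y_i)=2^{p_i-1}N(Y_0)$ with $N(Y_0)>0$. You have merely made explicit the bookkeeping and the simple-connectedness check that the authors leave implicit.
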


\begin{proof}[Proof of Theorem~$\ref{th:disjoint corks}$ and~$\ref{th:involutions of corks}$] 
%\begin{proof}[of Theorem~$\ref{th:disjoint corks}$ and~$\ref{th:involutions of corks}$]
The theorems clearly follow from the corollary above and Proposition~\ref{prop:X_i}.
\end{proof}

%\begin{theorem}
%If $p_1,p_2,\dots,p_n\geq 2$ are mutually different, then the simply connected closed smooth $4$-manifold $X_0$ contains mutually disjoint copies of $W_{p_i}$ $(1\leq i\leq n)$ with the following properties:\smallskip \\
%$(1)$ Each $4$-manifold $X_{i}$ is obtained from $X_{0}$ by removing the copy of $W_{p_i}$ and regluing it via $f_{p_i}$. \smallskip \\
%$(2)$ The $4$-manifolds $X_{i}$ $(0\leq i\leq n)$ are mutually homeomorphic but not diffeomorphic. 
%\end{theorem}
%\begin{theorem}
%If $p_1,p_2,\dots,p_n\geq 2$ are mutually different, then the simply connected closed smooth $4$-manifold $X_0$ contains a copy of $W(p_1-1,p_2-1,\dots,p_n-1)$ with the following properties:\smallskip \\
%$(1)$ Each $4$-manifold $X_{i}$ is obtained from $X_{0}$ by removing the copy of $W(p_1-1,p_2-1,\dots,p_n-1)$ and regluing it via the involution $f^i(p_1-1,p_2-1,\dots,p_n-1)$. \smallskip \\
%$(2)$ The $4$-manifolds $X_{i}$ $(0\leq i\leq n)$ are mutually homeomorphic but not diffeomorphic. 
%\end{theorem}
%%%%%%%%%%%%%%%%%%%%%%%%%%%%%%%%%%%%%%%%%%%%%%%%%%%%%%%%%%%%%%
\section{Further remarks}\label{section:remark}

\subsection{Plugs}\label{subsection:plug}
In~\cite{AY1}, we introduced new objects which we call \textit{Plugs}.

\begin{definition}
Let $P$ be a compact Stein $4$-manifold with boundary and $\tau: \partial P\to \partial P$ an involution on the boundary, which cannot extend to any self-homeomorphism of $P$. We call $(P, \tau)$ a \textit{Plug} of $X$,  if  $P\subset X$ and $X$ keeps its homeomorphism type and changes its diffeomorphism type when removing $P$ and gluing it via $\tau$. 
We call $(P, \tau)$ a \textit{Plug} if there exists a smooth $4$-manifold $X$ such that $(P, \tau)$ is a plug of $X$. 
\end{definition}

\begin{definition} Let $W_{m,n}$ be the smooth 4-manifold  in Figure~$\ref{fig11}$, and 
let $f_{m,n}:\partial W_{m,n}\to \partial W_{m,n}$ be the obvious involution obtained by first surgering $S^1\times B^3$ to $B^2\times S^2$ in the interior of $W_{m,n}$, then surgering the other imbedded $B^2\times S^2$ back to $S^1\times B^3$ (i.e. replacing the dots in Figure ~$\ref{fig1}$). Note that the diagram of $W_{m,n}$ is a symmetric link.
\begin{figure}[ht!]
\begin{center}
\includegraphics[width=0.8in]{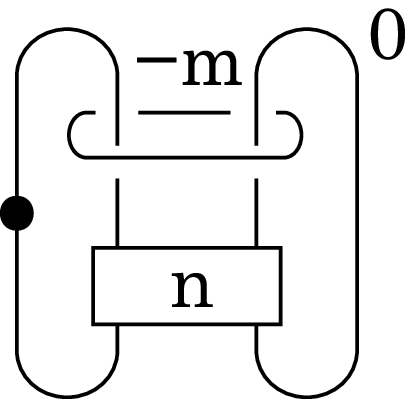}
\caption{}
\label{fig11}
\end{center}
\end{figure}
\end{definition}

\begin{theorem}[{\cite{AY1}}]\label{th:basic_plug}
For $m\geq 1$ and $n\geq 2$, the pair $(W_{m,n}, f_{m,n})$ is a plug.
\end{theorem}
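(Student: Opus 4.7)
My plan is to verify the three requirements separately: (a) $W_{m,n}$ is a compact Stein $4$-manifold, (b) $f_{m,n}$ is an involution on $\partial W_{m,n}$ that does \emph{not} extend to any self-homeomorphism of $W_{m,n}$, and (c) there exists some ambient smooth $4$-manifold $X$ with $W_{m,n}\subset X$ such that regluing via $f_{m,n}$ preserves the homeomorphism type but changes the diffeomorphism type. Part (a) is routine: I would convert the two dotted circles in Figure~\ref{fig11} to the standard $1$-handle notation and put the two $2$-handles in Legendrian position (exactly as displayed for $W_n$ in Figure~\ref{fig10}). One then checks the Eliashberg criterion that the framing on each $2$-handle is strictly less than its Thurston--Bennequin number, which is a finite explicit computation on the diagram.

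Part (b) is where the argument genuinely differs from the cork case. Unlike $W_n$, which is contractible, $W_{m,n}$ has nontrivial $H_2$, and this is what allows one to rule out a homeomorphism extension. From the handle picture I would compute $H_2(W_{m,n};\mathbf{Z})$ together with its intersection form, and compute $H_1(\partial W_{m,n};\mathbf{Z})$ and its linking form. Then I would compute how $f_{m,n}$ acts on boundary invariants (either on $H_1(\partial W_{m,n})$ or on the set of spin$^c$ structures, via the standard description of the involution as swapping the two $1$-handle/$2$-handle pairs). The extension obstruction would be that $(f_{m,n})_*$ on $\partial W_{m,n}$ cannot be realized as the restriction of any automorphism of $(H_2(W_{m,n};\mathbf{Z}), Q_{W_{m,n}})$, once one feeds in the long exact sequence of the pair $(W_{m,n},\partial W_{m,n})$. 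I expect this algebraic step to be the main technical hurdle: one has to isolate a homeomorphism invariant that $f_{m,n}$ demonstrably violates.

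For (c) I would embed $W_{m,n}$ into an ambient manifold built from an elliptic surface together with a rational blowdown or Fintushel--Stern knot surgery, paralleling Theorem~\ref{th:cork and rbd} and the constructions of Section~\ref{sec:construction}. Concretely, one locates $W_{m,n}$ inside a standard handle diagram of such a manifold, and computes the Seiberg--Witten basic classes before and after regluing (using Theorems~\ref{th:SW1}--\ref{th:SW3} and the blow-up formula) to show the diffeomorphism type actually changes. Simultaneously, one verifies that the reglued manifold is simply connected with the same Euler characteristic, signature, and type; Freedman's theorem then forces the homeomorphism type to be preserved in this specific embedding, which is all that the plug definition requires. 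Together (a), (b), (c) yield that $(W_{m,n},f_{m,n})$ is a plug.
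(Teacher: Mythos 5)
First, a framing point: this paper does not prove Theorem~\ref{th:basic_plug} at all --- it is imported verbatim from \cite{AY1} --- so the only proof to measure your plan against is the one given there. Your three-part outline (Stein structure; non-extendability of $f_{m,n}$ as a self-homeomorphism; existence of an ambient $4$-manifold on which the twist acts) is exactly the right unpacking of the definition of a plug, and parts (a) and (c) match what is actually done: (a) is the same Legendrian/Eliashberg argument the paper itself runs for $W(k_1,\dots,k_n)$, and (c) is realized by locating $W_{1,p}$ inside $D_p$ in a blown-up elliptic surface and detecting the change of smooth structure via Seiberg--Witten invariants of rational blowdowns, precisely as in Theorem~\ref{th:plug and rbd} and the computations of Sections~4--6.

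The genuine gap is in (b), which you yourself flag as ``the main technical hurdle'' and then leave unexecuted. Announcing that some homeomorphism invariant must be violated is not the same as exhibiting one, and this step is the entire content of the theorem --- it is what separates a plug from a cork. A telltale sign that the obstruction has not been pinned down is that your plan nowhere uses the hypothesis $n\ge 2$, which must enter exactly here: for the contractible corks $W_n$ the analogous dot/zero involution \emph{does} extend to a self-homeomorphism by Freedman's theorem, so whatever invariant you produce has to see the nontrivial $H_2(W_{m,n};\mathbf{Z})\cong\mathbf{Z}$ together with the parameters. In \cite{AY1} this is a concrete computation from the diagram: one determines $H_1(\partial W_{m,n};\mathbf{Z})$ and the classes of the two curves (the meridian of the dotted circle and that of the $0$-framed circle) that $f_{m,n}$ exchanges, and checks that for $n\ge 2$ this exchange is incompatible with the maps in the long exact sequence of $(W_{m,n},\partial W_{m,n})$ that any extension over $W_{m,n}$ --- even a topological one --- would have to respect. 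Until that computation is carried out from the actual handle diagram of $W_{m,n}$ (note also that the involution trades a single dotted circle with a single $0$-framed $2$-handle, rather than swapping ``two dotted circles'' as your step (a) suggests), the proposal is a correct strategy but not yet a proof.
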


As we pointed out in~\cite[Remark~5.3]{AY1}, removing and regluing plugs have naturally the same effect as cork operations, under some conditions. We can easily check that our constructions in this paper are such cases. Therefore we can similarly prove the theorems below. 

The following theorem shows that infinitely many different smooth structures on $4$-manifolds can be obtained from a fixed plug: 

\begin{theorem}\label{th:knotting plugs}
There exist a simply connected compact Stein $4$-manifold $P$, an involution $\tau$ on the boundary $\partial P$, and infinitely many simply connected closed smooth $4$-manifolds $X_n$ $(n\geq 0)$ with the following properties:\smallskip \\
$(1)$ The pair $(P,\tau)$ is a plug;\smallskip \\
$(2)$ The $4$-manifolds $X_n$ $(n\geq 0)$ are mutually homeomorphic but not diffeomorphic;\smallskip \\
$(3)$ For each $n\geq 1$, the $4$-manifold $X_n$ is obtained from $X_0$ by removing a copy of $P$ and regluing it via $\tau$. Consequently, the pair $(P,\tau)$ is a plug of $X_0$. 

\vspace{.05in}
In particular, from $X_0$ we can produce infinitely many different smooth structures by the process of removing a copy of $P$ and regluing it via $\tau$. Consequently, these embeddings of $P$ into $X_0$ are mutually non-isotopic $($knotted copies of each other$)$. 
\end{theorem}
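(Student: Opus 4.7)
The plan is to mirror the proof of Theorem~\ref{th:knotting corks} almost verbatim, substituting a plug $(W_{m,n}, f_{m,n})$ of Theorem~\ref{th:basic_plug} for the cork $(W_1, f_1)$ used there. The decisive input is the paragraph preceding the present theorem, in conjunction with~\cite[Remark~5.3]{AY1}: under the hypotheses produced by the handle-calculus constructions of Sections~3 and~5, removing and regluing a plug has the same effect on Seiberg-Witten invariants as a cork operation. Consequently the Fintushel-Stern knot surgery mechanism used to prove Theorem~\ref{th:knotting corks} applies without essential change.

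Concretely, the first step is a plug-analogue of Proposition~\ref{prop:cork}: produce a handle diagram of some $E(n)\# k\overline{\mathbf{C}\mathbf{P}^2}$ (for suitable $n$, $k$, $m$) containing a copy of $W_{m,n}$ disjoint from a cusp-fiber neighborhood, and arranged so that $\mathbf{C}\mathbf{P}^2\# 2\overline{\mathbf{C}\mathbf{P}^2}$ splits off as a connected summand disjoint from the cusp. Set $P := W_{m,n}$, $\tau := f_{m,n}$, and let $X$ be the result of removing $P$ and regluing by $\tau$. The plug-analogues of Corollaries~\ref{cor:1_X_K} and~\ref{cor:2_X_K} then give that $X$ splits off $S^2\times S^2$ with the cusp still in the complementary summand (using $Y\#\mathbf{C}\mathbf{P}^2\#\overline{\mathbf{C}\mathbf{P}^2}\cong Y\#(S^2\times S^2)$ for non-spin $Y$); in particular $SW_X\equiv 0$. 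For a knot $K\subset S^3$, let $X_K$ denote the Fintushel-Stern knot surgery performed in this cusp. The Akbulut~\cite{A4} / Auckly~\cite{Au} stabilization theorem gives $X_K\cong X$, while the disjointness of $P$ from the cusp allows $X_K$ to be equivalently described as the plug operation applied to $E(n)_K\# k\overline{\mathbf{C}\mathbf{P}^2}$.

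To conclude, pick knots $K_i\subset S^3$ ($i\geq 1$) with mutually distinct nontrivial Alexander polynomials, set $X_0 := X$, and $X_i := E(n)_{K_i}\# k\overline{\mathbf{C}\mathbf{P}^2}$. The Fintushel-Stern formula together with the blow-up formula makes the Seiberg-Witten polynomials of the $X_i$ ($i\geq 1$) pairwise distinct and nonzero, so combined with $SW_{X_0}\equiv 0$ the $X_i$ are mutually non-diffeomorphic, while Freedman's theorem yields a common homeomorphism type. Property~(1) is precisely Theorem~\ref{th:basic_plug}, and property~(3) is built into the construction of $X_i$ from $X_0$ by the plug operation.

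The main obstacle lies in the opening handle-calculus step: one must verify that the diagrams used in the cork argument (or minor modifications thereof, drawn from the rational-blowdown constructions of Section~\ref{sec:construction}) actually exhibit a plug $W_{m,n}$ rather than merely a cork $W_n$, disjoint from a cusp neighborhood, and one must confirm that the associated plug operation agrees with a corresponding cork-style operation at the level of diffeomorphism type. Once this identification is in place, the Seiberg-Witten vanishing, the stabilization identification $X_K\cong X$, and the knot-distinguishing step transfer mechanically from the cork argument.
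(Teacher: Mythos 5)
Your proposal is correct and follows essentially the same route as the paper: the paper takes $P=W_{1,2}$, $\tau=f_{1,2}$, proves a plug analogue of Proposition~\ref{prop:cork} (Proposition~\ref{prop:plug}, exhibiting both $W_1$ and $W_{1,2}$ in a diagram of $E(n)\#\overline{\mathbf{C}\mathbf{P}^2}$ disjoint from the cusp neighborhood), and then lets the entire knot-surgery argument of Theorem~\ref{th:knotting corks} carry over verbatim. The ``main obstacle'' you flag is resolved in the paper exactly as you anticipate: since the $0$-framed $2$-handles of the copies of $W_1$ and $W_{1,2}$ in Figure~\ref{fig12} link geometrically once, the argument of Theorem~\ref{th:cork and rbd} shows the $W_{1,2}$-plug twist and the $W_1$-cork twist produce the same manifold $X$, so no independent Seiberg--Witten computation for the plug is needed.
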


\begin{proof}We begin with the proposition below. 
\begin{proposition}\label{prop:plug}
The $4$-manifold $E(n)\# \overline{\mathbf{C}\mathbf{P}^2}$ $(n\geq 2)$ has a handle decomposition as in Figure~\ref{fig12}. The obvious cusp neighborhood in the figure is isotopic to the regular neighborhood of a cusp fiber of $E(n)$. Note that the figure contains $W_1$ and $W_{1,2}$. Furthermore, this copy of $W_1$ is isotopic to the copy of $W_1$ obtained in Proposition~\ref{prop:cork}. 
\begin{figure}[ht!]
\begin{center}
\includegraphics[width=3.90in]{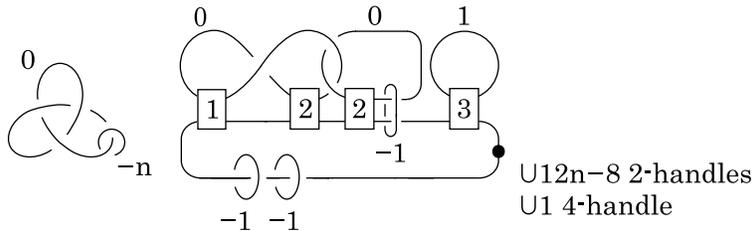}
\caption{$E(n)\# \overline{\mathbf{C}\mathbf{P}^2}$ $(n\geq 2)$}
\label{fig12}
\end{center}
\end{figure}
\end{proposition}

\begin{proof}[Proof of Proposition~$\ref{prop:plug}$]
%\begin{proof}[of Proposition~$\ref{prop:plug}$]
It follows from Corollary~\ref{cor1:GS} that the submanifold of $E(n)$ in the first diagram of Figure~\ref{fig13} is disjoint from the cusp neighborhood of $E(n)$. Slide handles as in Figure~\ref{fig20}. Now we can easily check the required claim. See also the proof of Proposition~\ref{prop:cork}. 
\end{proof}

Let $X$ be the smooth $4$-manifold obtained from $E(n)\# \overline{\mathbf{C}\mathbf{P}^2}$ $(n\geq 2)$ by removing the copy of $W_{1,2}$ in Figure~\ref{fig12} and regluing it via $f_{1,2}$. Note that two $0$-framed $2$-handles of the copies of $W_1$ and $W_{1,2}$ in Figure~\ref{fig12} link geometrically once. Therefore, similarly to the proof of Theorem~\ref{th:cork and rbd}, we can easily show that $X$ is obtained from $E(n)\# \overline{\mathbf{C}\mathbf{P}^2}$ by removing the copy of $W_{1}$ in Figure~\ref{fig12} and regluing it via $f_{1}$. The rest of the proof of Theorem~\ref{th:knotting plugs} proceeds as in the proof of Theorem~\ref{th:knotting corks}. 
\end{proof}

\begin{remark}$(1)$ As we pointed out in the proof above, the cork operation and the plug operation are the same, for our example of Theorem~\ref{th:knotting corks} and \ref{th:knotting plugs}. \smallskip \\
$(2)$ In Theorem~\ref{th:knotting plugs}, we obtained infinitely many knotted embeddings of $W_{1,2}$ into $X$. Each embedding gives the same subspace of $H_2(X;\mathbf{Z})$ corresponding to $H_2(W_{1,2};\mathbf{Z})(\cong \mathbf{Z})$. Thus the construction in the theorem might give useful applications to find homologous but non-isotopic surfaces in $X$. \smallskip \\
$(3)$ We proved Theorem~\ref{th:knotting plugs} for the plug $(W_{1,2}, f_{1,2})$. We can similarly prove Theorem~\ref{th:knotting plugs} for many other plugs, including $(W_{m,n}, f_{m,n})$ $(m\geq 1,\, n\geq 2)$. 
\end{remark}

The next theorem says that we can put an arbitrary finite number of plugs into mutually disjoint positions in $4$-manifolds:
\begin{theorem}\label{th:disjoint plugs}
For each $n\geq 1$, there exist simply connected closed smooth $4$-manifolds $Y_{i}$ $(0\leq i\leq n)$, codimension zero simply connected compact Stein submanifolds $P_i$ $(1\leq i\leq n)$ of $Y_0$, and an involution $\tau_i$ on the each boundary $\partial P_i$ $(1\leq i\leq n)$ with the following properties:\smallskip \\
$(1)$ The pairs $(P_i,\tau_i)$ $(1\leq i\leq n)$ are plugs; \smallskip\\
$(2)$ The submanifolds $P_i$ $(1\leq i\leq n)$ of $Y_{0}$ are mutually disjoint;\smallskip\\
$(3)$ Each $Y_{i}$ $(1\leq i\leq n)$ is obtained from $Y_{0}$ by removing the submanifold $P_i$ and  regluing it via $\tau_i$;\smallskip \\
$(4)$ The $4$-manifolds $Y_{i}$ $(0\leq i\leq n)$ are mutually homeomorphic but not diffeomorphic. In particular, the pairs $(P_i, \tau_i)$ $(1\leq i\leq n)$ are plugs of $Y_{0}$.
\end{theorem}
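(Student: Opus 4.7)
The plan is to run the construction of Section~\ref{sec:construction} in parallel, substituting plug operations for cork operations in exactly the way Theorem~\ref{th:knotting plugs} was derived from Theorem~\ref{th:knotting corks}. Concretely, I would fix pairwise distinct integers $p_1,\dots,p_n\geq 2$ and start from the handle decomposition of $Y_0=E(p_1+\cdots+p_n)\#n\overline{\mathbf{CP}^2}$ given in Proposition~\ref{prop:disjoint C_p}, which displays mutually disjoint pieces $D_{p_1},\dots,D_{p_n}$.

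First, inside each $D_{p_i}$-region of Figure~\ref{fig8} I would perform, locally and independently, the same handle-slide sequence used in the proof of Proposition~\ref{prop:plug}. That sequence converted the standard picture of $E(n)\#\overline{\mathbf{CP}^2}$ into one that visibly contains both $W_1$ and $W_{1,2}$, with their two $0$-framed $2$-handles linking geometrically once. Applied in the $D_{p_i}$-region (which already hides the cork $W_{p_i-1}$ supplied by Proposition~\ref{prop:X_i}), the same sequence exposes, alongside $W_{p_i-1}$, an adjacent simply-connected compact Stein plug $P_i:=W_{p_i-1,m_i}$ for a suitable $m_i\geq 2$, whose $0$-framed $2$-handle links geometrically once with the $0$-framed $2$-handle of $W_{p_i-1}$. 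Because these moves are supported in a small neighborhood of $D_{p_i}$ and the $D_{p_i}$'s are already mutually disjoint in Figure~\ref{fig8}, the resulting plugs $P_1,\dots,P_n$ are mutually disjoint submanifolds of $Y_0$.

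Second, by the argument in the last paragraph of the proof of Theorem~\ref{th:knotting plugs}, the geometric linking of the two $0$-framed $2$-handles forces the plug operation $(P_i,\tau_i:=f_{p_i-1,m_i})$ on $Y_0$ to have precisely the same effect on $Y_0$ as the cork operation $(W_{p_i-1},f_{p_i-1})$. Hence removing $P_i$ from $Y_0$ and regluing via $\tau_i$ yields exactly the $4$-manifold $Y_i$ of Definition~\ref{def:disjoint cork}. Invoking Theorem~\ref{th:disjoint corks} (which already establishes that $Y_0,Y_1,\dots,Y_n$ are mutually homeomorphic but not diffeomorphic for pairwise distinct $p_i$'s) then gives (3) and (4), and automatically promotes each $(P_i,\tau_i)$ to a plug of $Y_0$, yielding (1) and (2).

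The main obstacle is Step~1: one has to verify carefully that the plug-revealing handle moves of Proposition~\ref{prop:plug} can truly be localized inside each $D_{p_i}$ simultaneously, and that the result is globally a valid handlebody diagram of $Y_0$ in which the new plugs are genuinely disjoint. Since all modifications are performed in small neighborhoods of the already disjoint pieces $D_{p_i}$, this reduces to routine handle-calculus bookkeeping; the substantive topology has already been done in Sections~\ref{sec:construction} and~\ref{subsection:plug}.
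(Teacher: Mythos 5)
Your overall strategy is the paper's: start from the mutually disjoint copies of $D_{p_i}$ in Figure~\ref{fig8} (Proposition~\ref{prop:disjoint C_p}), locate a plug inside each one, show that the plug twist realizes the rational blowdown $Y_i'$ up to the extra $\overline{\mathbf{C}\mathbf{P}^2}$ summands, and import the Seiberg--Witten computation from the proof of Theorem~\ref{th:disjoint corks}. Two corrections, however. First, the plug sitting inside $D_{p}$ is $W_{1,p}$, not $W_{p-1,m}$ for some unspecified $m\geq 2$ as you guess: the paper's entire proof consists of quoting Theorem~\ref{th:plug and rbd} (that is, \cite[Theorem~5.1.(3)]{AY1}), which states that $D_p$ contains $W_{1,p}$ and that removing it and regluing via $f_{1,p}$ produces $Z_{(p)}\#(p-1)\overline{\mathbf{C}\mathbf{P}^2}$ --- the exact plug analogue of Theorem~\ref{th:cork and rbd}. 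Second, because that theorem is already on record, there is no need to re-run the handle slides of Proposition~\ref{prop:plug} locally in each $D_{p_i}$ and then argue via geometric linking of $0$-framed $2$-handles that the plug twist agrees with the cork twist; the step you flag as the ``main obstacle'' and defer to routine bookkeeping is precisely the content of the cited theorem, and the paper does not re-prove it. With $P_i=W_{1,p_i}$ and $\tau_i=f_{1,p_i}$, the rest of your argument (disjointness from Figure~\ref{fig8}, and the homeomorphism/non-diffeomorphism of the $Y_i$ from Theorem~\ref{th:disjoint corks}) goes through as in the paper.
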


The following theorem says that, for an embedding of a plug into a $4$-manifold, we can produce finitely many different plug structures of the $4$-manifold by only changing the involution of the plug without changing its embedding:

\begin{theorem}\label{th:involutions of plugs}
For each $n\geq 1$, there exist simply connected closed smooth $4$-manifolds $Y_{i}$ $(0\leq i\leq n)$, an embedding of a simply connected compact Stein $4$-manifold $P$ into $Y_{0}$, and involutions $\tau_i$ $(1\leq i\leq n)$ on the boundary $\partial P$ with the following properties:\smallskip \\
$(1)$ The pairs $(P,\tau_i)$ $(1\leq i\leq n)$ are plugs. \smallskip\\
$(2)$ For each $1\leq i\leq n$, the $4$-manifold $Y_{i}$ is obtained from $Y_{0}$ by removing the submanifold $P$ and regluing it via $\tau_i$;\smallskip \\
$(3)$ The $4$-manifolds $Y_{i}$ $(0\leq i\leq n)$ are mutually homeomorphic but not diffeomorphic, hence the pairs $(P, \tau_i)$ $(1\leq i\leq n)$ are mutually different plugs of $Y_{0}$.
\end{theorem}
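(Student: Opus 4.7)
The plan is to translate the cork-based argument into the plug setting using the equivalence between cork and plug operations that was already exploited in the proof of Theorem~\ref{th:knotting plugs}. Fix pairwise distinct integers $p_1,\dots,p_n\geq 2$ and reuse the construction of Section~\ref{sec:construction}: let $Y_0:=E(p_1+\cdots+p_n)\#n\overline{\mathbf{C}\mathbf{P}^2}$ and let $Y_i$ $(1\leq i\leq n)$ be the manifolds of Definition~\ref{def:disjoint cork}. The Seiberg--Witten calculation used to prove Theorem~\ref{th:disjoint corks} already establishes that the $Y_i$ are pairwise homeomorphic but not diffeomorphic, so the only task is to exhibit a single fixed plug $P\subset Y_0$ together with $n$ involutions $\tau_1,\dots,\tau_n$ of $\partial P$ whose regluings recover the $Y_i$.

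The construction of $P$ will proceed by refining the handle diagram of Proposition~\ref{prop:disjoint C_p}. Inside each $D_{p_i}$ appearing in that figure, I would perform the local handle-slide of Proposition~\ref{prop:plug} (compare Figure~\ref{fig20}); this slide places, next to the cork $W_{p_i-1}$ that is already present inside $D_{p_i}$, a plug $W_{m_i,p_i}$ (for an appropriate $m_i$) whose $0$-framed $2$-handle links the $0$-framed $2$-handle of $W_{p_i-1}$ geometrically once, and leaves the $n$ plug--cork pairs mutually disjoint. I would then set
\[
P:=W_{m_1,p_1}\natural W_{m_2,p_2}\natural\cdots\natural W_{m_n,p_n},
\]
which is a simply connected compact Stein $4$-manifold (Stein by the Eliashberg criterion, verified on a Legendrian realization exactly as in the lemma following Definition~\ref{def:disjoint cork} and Figure~\ref{fig10}), and embed $P$ into $Y_0$ via the refined diagram. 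For each $i$, define $\tau_i\colon\partial P\to\partial P$ to be the involution $f_{m_i,p_i}$ on the boundary of the $i$-th summand and the identity elsewhere; by Theorem~\ref{th:basic_plug}, each pair $(P,\tau_i)$ is a plug.

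To verify property (2), note that because the boundary-sum components of $P$ are mutually disjoint and $\tau_i$ is the identity away from the $i$-th summand, removing $P$ and regluing via $\tau_i$ coincides with the plug operation on the single copy $W_{m_i,p_i}\subset Y_0$. The observation used in the proof of Theorem~\ref{th:knotting plugs}---that whenever a $0$-framed $2$-handle of a $W_{m,\ell}$ links the $0$-framed $2$-handle of a nearby $W_k$ geometrically once, the plug operation on $W_{m,\ell}$ has the same effect as the cork operation on $W_k$---then identifies this plug operation with the cork operation on $W_{p_i-1}$. By Proposition~\ref{prop:X_i}(1) the latter produces exactly $Y_i$, giving (2); then (3) is the cork-version statement already available, and (1) is Theorem~\ref{th:basic_plug}.

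The main obstacle will be the first step: extending the single $W_1$--$W_{1,2}$ rearrangement of Proposition~\ref{prop:plug} into the much denser diagram of Proposition~\ref{prop:disjoint C_p} and doing so $n$ times in parallel, while preserving disjointness of the $n$ plug--cork pairs and checking that each $W_{m_i,p_i}$ is really a plug whose $0$-framed $2$-handle links its partner $W_{p_i-1}$ geometrically once. Once these diagrammatic checks are carried out---verifying simple connectedness, the Eliashberg Stein criterion, the required geometric linking, and mutual disjointness---the distinction of the $Y_i$ up to diffeomorphism is immediate from the Seiberg--Witten computation already invoked.
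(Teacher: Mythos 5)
Your overall route is the one the paper takes: start from $Y_0=E(p_1+\cdots+p_n)\#\,n\overline{\mathbf{C}\mathbf{P}^2}$ and the $Y_i$ of Definition~\ref{def:disjoint cork}, locate a plug of type $W_{m,p}$ inside each $D_{p_i}$, form the boundary sum $P$ with $\tau_i$ acting only on the $i$-th summand, and reuse the Seiberg--Witten computation from the cork case for claim (3). The only real difference in route is how the plug inside $D_{p_i}$ and the identification of the plug twist with the passage from $Y_0$ to $Y_i$ are obtained: the paper simply quotes Theorem~\ref{th:plug and rbd} (which pins the plug down as $W_{1,p_i}$ and states directly that the plug twist realizes $Y_i'\#(p_i-1)\overline{\mathbf{C}\mathbf{P}^2}$), whereas you propose to re-derive this by handle slides as in Proposition~\ref{prop:plug} and then reduce to the cork operation on the adjacent $W_{p_i-1}$ via the ``links geometrically once'' observation from the proof of Theorem~\ref{th:knotting plugs}. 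That is a cosmetic difference, not a different argument.

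There is, however, one genuine gap: your justification of claim (1). Theorem~\ref{th:basic_plug} says that $(W_{m,p},f_{m,p})$ is a plug; it does not say that $(P,\tau_i)$ is a plug when $P=W_{m_1,p_1}\natural\cdots\natural W_{m_n,p_n}$ and $\tau_i$ is $f_{m_i,p_i}$ on one summand and the identity on the others. The definition of a plug requires that $\tau_i$ extend to no self-homeomorphism of $P$ itself, and a priori an involution that fails to extend over a summand could still extend over the boundary sum; this must be checked separately. This is precisely why the paper's proof of Theorem~\ref{th:involutions of plugs} adds that one ``also uses the argument similar to [AY1, Lemma~2.7.(3)]'' on top of Theorem~\ref{th:plug and rbd}. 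Your write-up presents claim (1) as an immediate consequence of Theorem~\ref{th:basic_plug}, which it is not; you need to supply (or cite) the non-extendability argument for the boundary sum.
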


\begin{proof}[Proof of Theorem~$\ref{th:disjoint plugs}$ and~$\ref{th:involutions of plugs}$]
%\begin{proof}[of Theorem~$\ref{th:disjoint plugs}$ and~$\ref{th:involutions of plugs}$]
According to the theorem below, we can show the required claims similarly to the proof of Theorem~$\ref{th:disjoint corks}$ and~$\ref{th:involutions of corks}$. (As for Theorem~$\ref{th:involutions of plugs}$, we also use the argument similar to~\cite[Lemma~2.7.(3)]{AY1}.)

\begin{theorem}[{\cite[Theorem~5.1.(3)]{AY1}}]\label{th:plug and rbd}Suppose that a smooth $4$-manifold $Z$ contains the $4$-manifold $D_{p}$ in Figure~\ref{fig7}. Let $Z_{(p)}$ be the rational blowdown of $Z$ along the copy of $C_p$ contained in $D_p$. 
Then the submanifold $D_p$ of $Z$ contains $W_{1,p}$ such that $Z_{(p)}\# (p-1)\overline{\mathbf{C}\mathbf{P}^2}$ is obtained from $Z$ by removing $W_{1,p}$ and regluing it via $f_{1,p}$.
\end{theorem}
\end{proof}
\subsection{Knotted contractible 4-manifolds} Lickorish~\cite{Lic} constructed large families of contractible 4-manifolds that have two non-isotopic embeddings into $S^4$. Livingston~\cite{Liv} later gave large families of contractible 4-manifolds that have infinitely many mutually non-isotopic embeddings into $S^4$. These embeddings are detected by the fundamental group of their complements. The corks $W_n$ can  be knotted  in $S^4$ with simply connected complements (even PL knotted), this is because  doubling $W_n$  both by  the identity and by the involution $f_n:\partial W_n \to \partial W_n $ give $S^4$, and $f_n$ takes a slice knot to a non-slice knot (cf. \cite{A6}). Obviously these knotted imbeddings $W_{n}\subset S^4$ are not corks of $S^4$. It is not known whether $S^4$ admits cork imbeddings (i.e. it is not known whether $S^4$ admits an exotic smooth structure). 
Theorem~\ref{th:knotting corks} of this paper gives infinitely many mutually non-isotopic embeddings of $W_1$ (and also other contractible $4$-manifolds [See Remark~\ref{rem:knotted corks}.(3)]) into the $4$-manifold $X_0$ with simply connected complements, so that the imbeddings give mutually different cork structures of $X_0$. 

%\newpage
%%%%%%%%%%%%%%%%%
\begin{figure}[ht!]
\begin{center}
\includegraphics[width=4.4in]{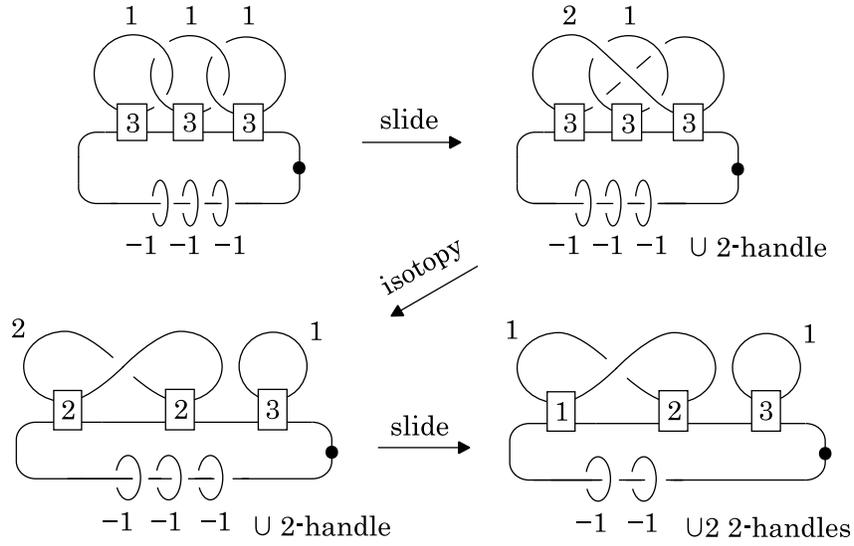}
\caption{handle slides}
\label{fig13}
\end{center}
\end{figure}
%%%%%%%%%%%%%%%%
\begin{figure}[ht!]
\begin{center}
\includegraphics[width=4.7in]{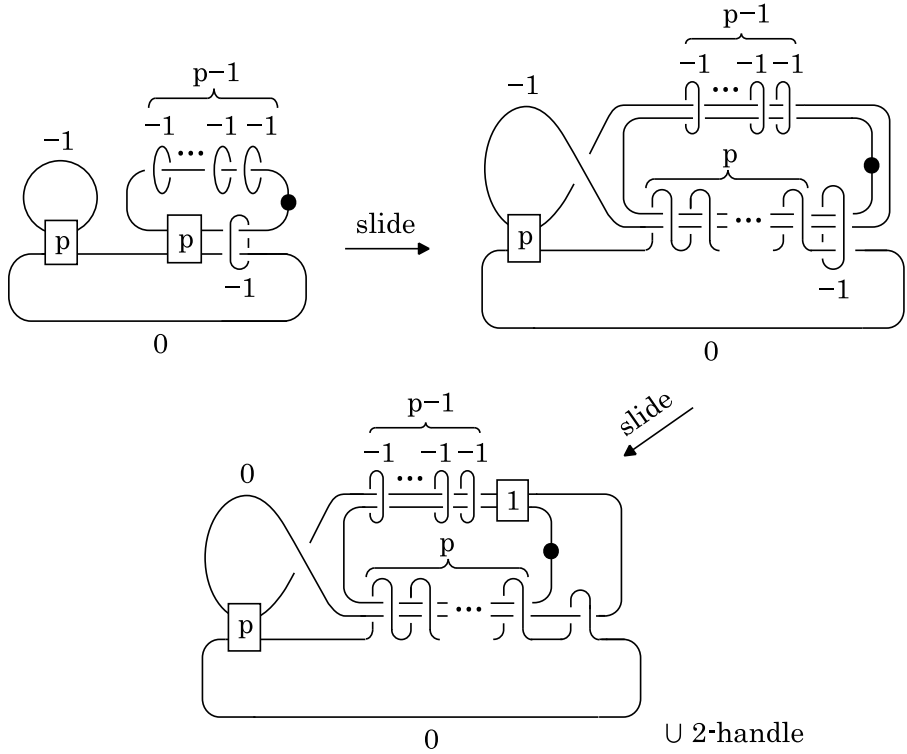}
\caption{handle slides of $D_p$}
\label{fig14}
\end{center}
\end{figure}
%%%%%%%%%%%%%%%%
\begin{figure}[ht!]
\begin{center}
\includegraphics[width=4.95in]{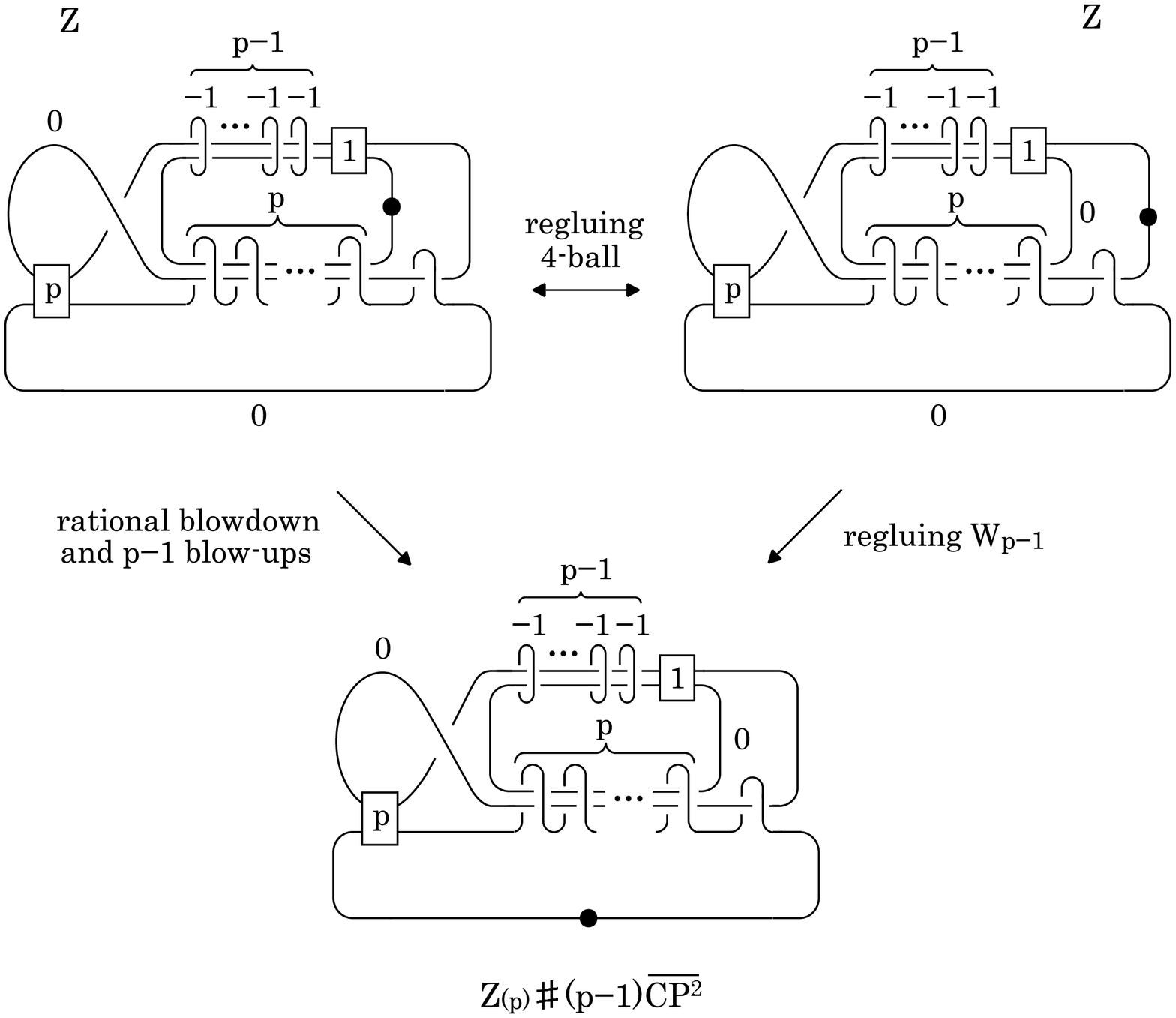}
\caption{}
\label{fig15}
\vspace{0.5in}
\end{center}
\end{figure}
%%%%%%%%%%%%%%%%
\begin{figure}[ht!]
\begin{center}
\includegraphics[width=1.3in]{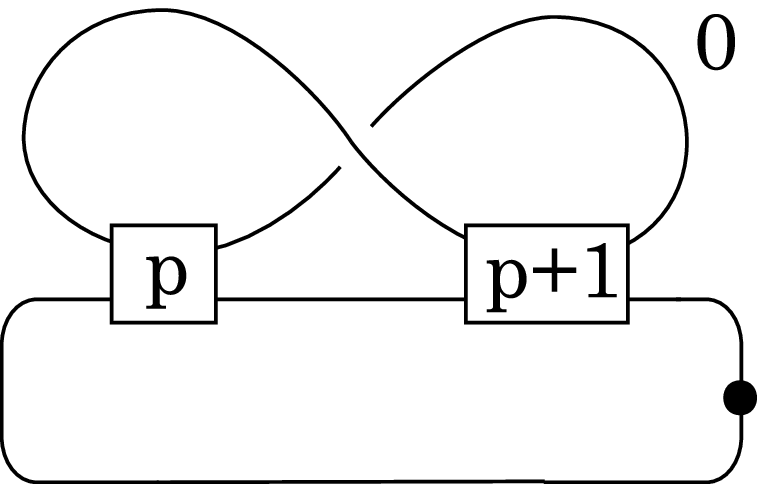}
\caption{$W_{p-1}$}
\label{fig16}
\end{center}
\end{figure}
%%%%%%%%%%%%%%%%%
\begin{figure}[ht!]
\begin{center}
\includegraphics[width=4.95in]{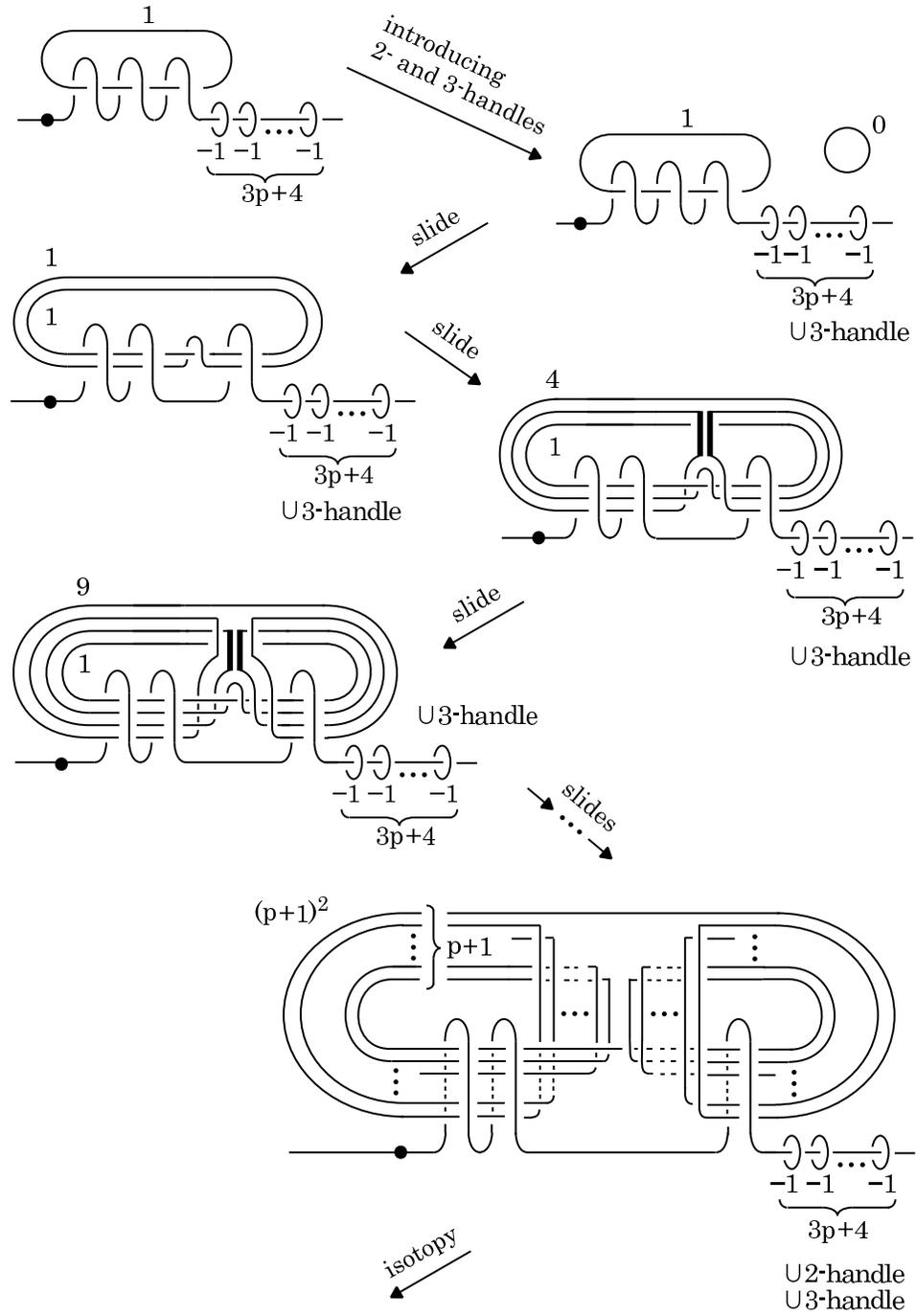}
\caption{introducing a $2$-handle/$3$-handle pair and sliding handles}
\label{fig17}
\end{center}
\end{figure}
%%%%%%%%%%%%%%%%%
\begin{figure}[ht!]
\begin{center}
\includegraphics[width=4.9in]{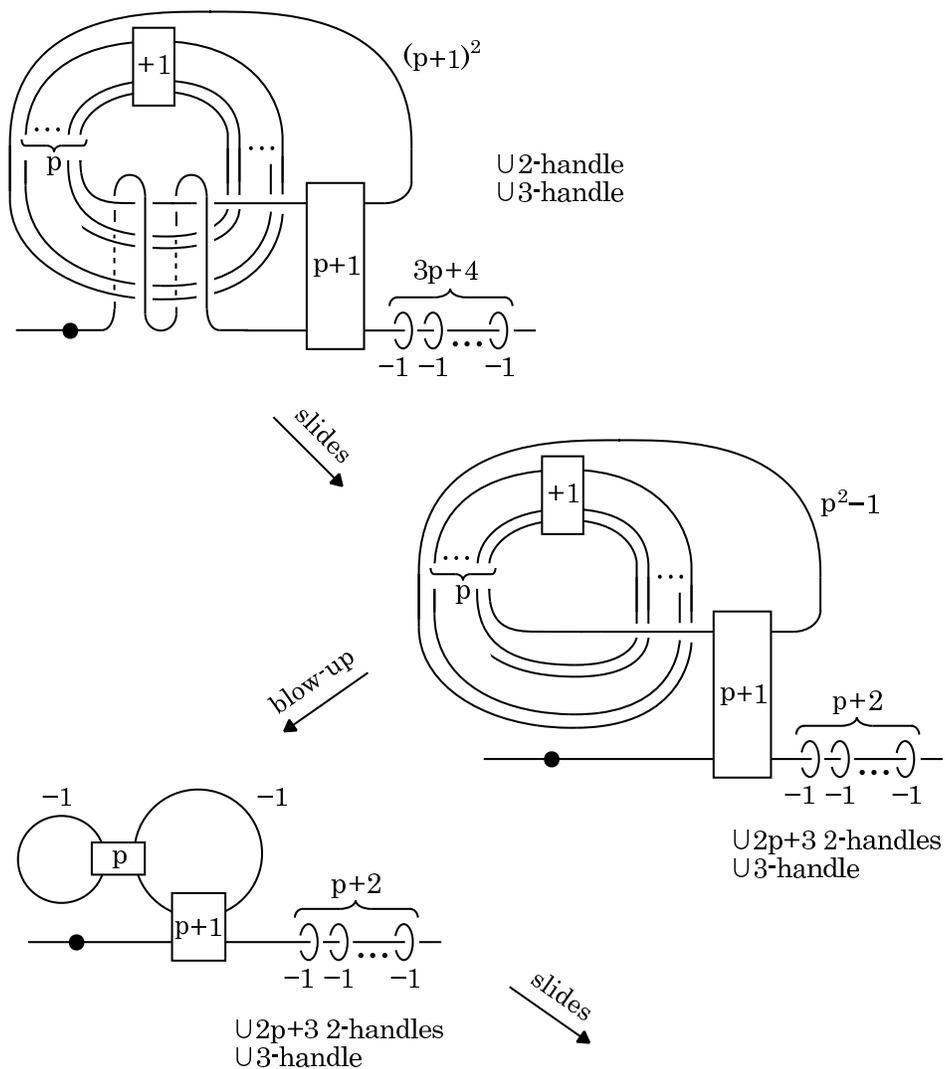}
\caption{handle slides and blow-up}
\label{fig18}
\end{center}
\end{figure}
%%%%%%%%%%%%%%%%%
\begin{figure}[ht!]
\begin{center}
\includegraphics[width=4.9in]{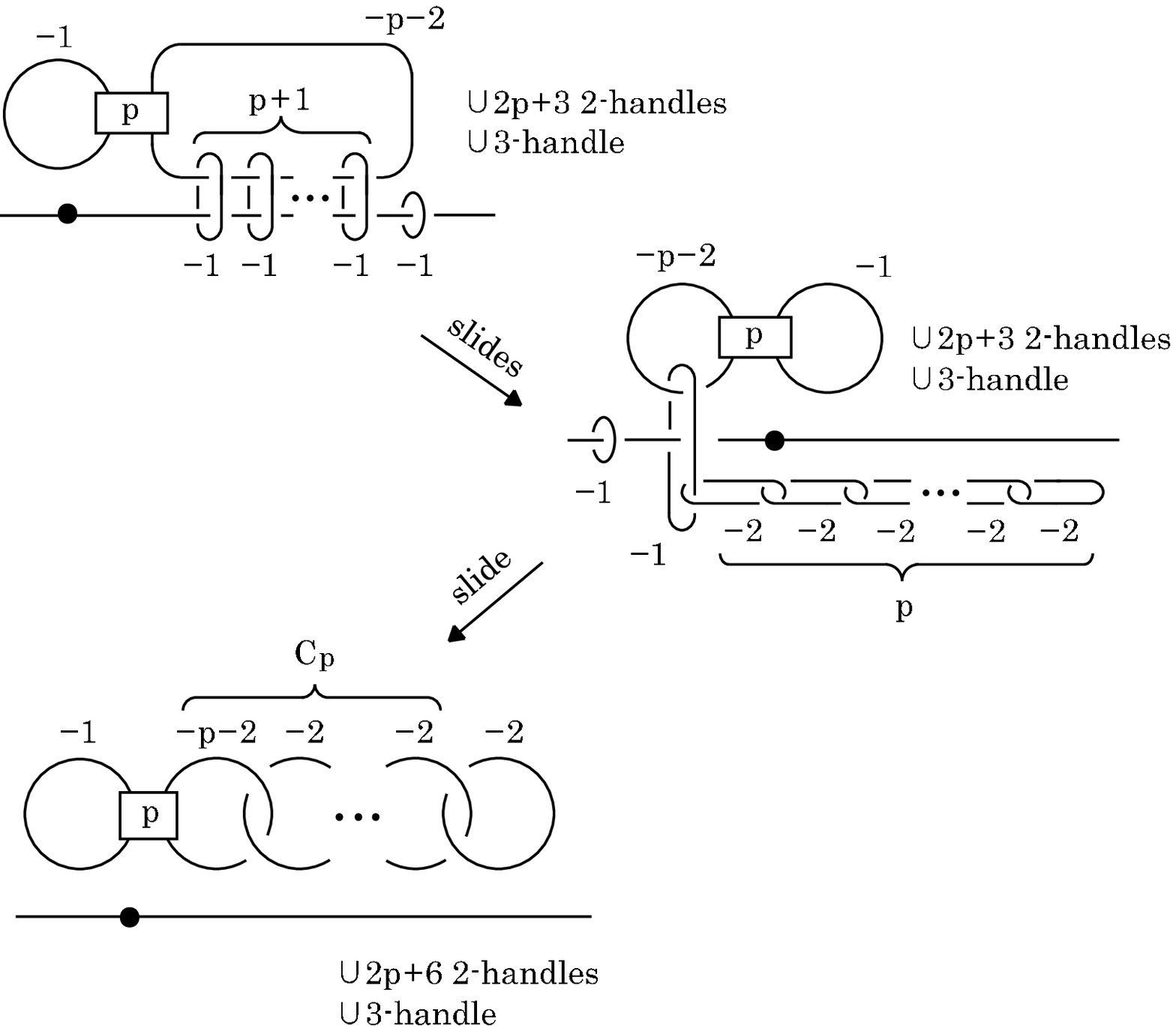}
\caption{handle slides}
\label{fig19}
\vspace{0.3\baselineskip }
\end{center}
\end{figure}
%%%%%%%%%%%%%%%%%

\begin{figure}[h!]
\begin{center}
\includegraphics[width=4.4in]{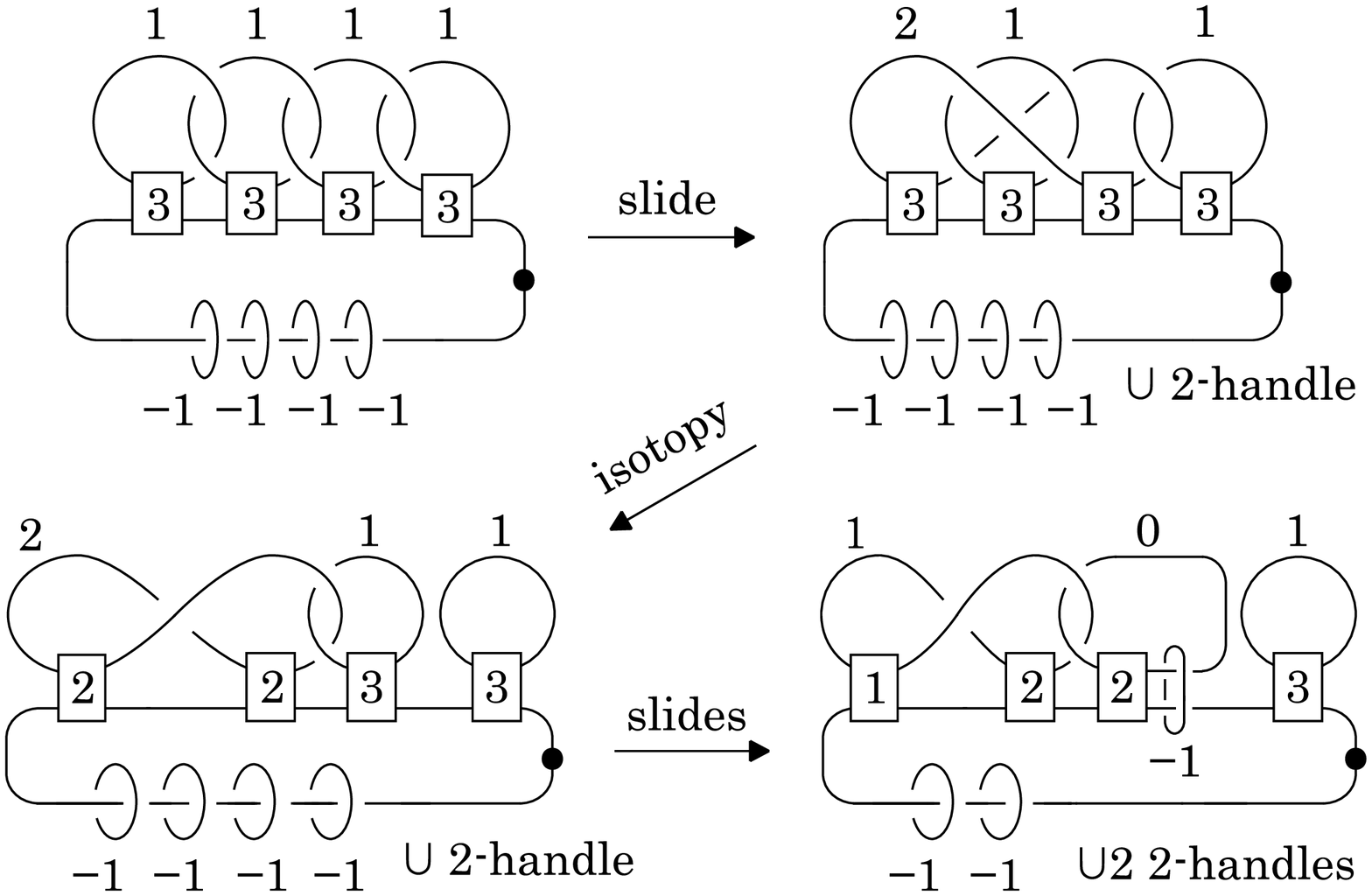}
\caption{handle slides}
\label{fig20}
\end{center}
\end{figure}
\clearpage
%%%%%%%%%%%%%%%%%%%%%

%%%%%%%%%%%%%%%%%%%%%
%\affiliationone{
%Selman Akbulut\\
%Department~of~Mathematics, \\Michigan~State~University, E.Lansing, MI,\\ 48824,
%USA
%\email{akbulut@math.msu.edu}}
%\affiliationtwo{% in this example, two authors share an institution
%Kouichi Yasui\\
%Department~of~Mathematics,\\ Graduate~School~of~Science, \\Osaka~University, Toyonaka, \\Osaka~560-0043,
%Japan
%\email{kyasui@cr.math.sci.osaka-u.ac.jp}}
%\affiliationthree{~} %inserts a space to make this field empty
%\affiliationfour{%
%Current address:\\
%Research~Institute~for~Mathematical~\\Sciences, Kyoto~University, \\Kyoto~606-8502,
%Japan
%\email{kyasui@kurims.kyoto-u.ac.jp}}
\end{document}